\title{\vspace{-0.7cm}Freiman homomorphisms on sparse random sets}
\author{D. Conlon\thanks{Mathematical Institute, Woodstock Road, Oxford OX2 6GG, UK.
E-mail: {\tt david.conlon@maths.ox.ac.uk}. Research supported by a
Royal Society University Research Fellowship and by ERC Starting Grant 676632.} \and W. T. Gowers\thanks{Department of Pure Mathematics and Mathematical Statistics,
Wilberforce Road, Cambridge CB3 0WB, UK. Email: {\tt
w.t.gowers@dpmms.cam.ac.uk}. Research supported by a Royal Society 2010 Anniversary Research Professorship.}}
\newenvironment{proof}
      {\medskip\noindent{\bf Proof.}\hspace{1mm}}
      {\hfill$\Box$\medskip}
\def\qed{\ifvmode\mbox{ }\else\unskip\fi\hskip 1em plus 10fill$\Box$}
\def\eps{\epsilon}
\def\E{\mathbb{E}}
\def\Z{\mathbb{Z}}
\def\R{\mathbb{R}}
\def\T{\mathbb{T}}
\def\C{\mathbb{C}}
\def\P{\mathbb{P}}
\def\b{\beta}
\def\d{\delta}
\def\D{\Delta}
\def\e{\epsilon}
\def\ra{\rightarrow}
\def\ol{\overline}
\def\hf{\hat{f}}
\def\hg{\hat{g}}
\def\var{\mathrm{var}}
\def\cf{\mathcal F}
\def\supp{\mathop{\mbox{supp}}}
\def\sp#1{\langle #1\rangle}
\newtheorem{theorem}{Theorem}[section]
\newtheorem{lemma}[theorem]{Lemma}
\newtheorem{corollary}[theorem]{Corollary}
\newtheorem{definition}[theorem]{Definition}
\begin{document}
\date{}
\maketitle

\begin{abstract}
A result of Fiz Pontiveros shows that if $A$ is a random subset of $\Z_N$ where each element is chosen independently with probability $N^{-1/2+o(1)}$, then with high probability every Freiman homomorphism defined on $A$ can be extended to a Freiman homomorphism on the whole of $\Z_N$. In this paper we improve the bound to $CN^{-2/3}(\log N)^{1/3}$, which is best possible up to the constant factor.
\end{abstract}

\section{Introduction}

A \textit{Freiman homomorphism} from a subset $A\subset\Z_N$ to $\Z_N$ is a function $\phi:A\ra\Z_N$ with the property that $\phi(a)+\phi(b)=\phi(c)+\phi(d)$ whenever $a,b,c$ and $d$ are elements of $A$ with $a+b=c+d$. If $\phi$ is an affine function, meaning that there exist $r$ and $s$ such that $\phi(a)=ra+s$ for every $a \in A$, then clearly $\phi$ is a Freiman homomorphism. It is also easy to prove that every Freiman homomorphism from $\Z_N$ to $\Z_N$ is affine. From this it follows that a Freiman homomorphism defined on $A$ is affine if and only if it can be extended to a Freiman homomorphism on the whole of $\Z_N$.

Over the last ten years or so, a number of results have been proved that show that important combinatorial and additive properties of structures such as complete graphs and finite Abelian groups are preserved when one passes to random subsets of those structures of surprisingly low density. We will not attempt an exhaustive summary of this area here, referring the reader instead to the papers~\cite{BMS15, CG10, CGSS14, FRS10, ST15, S10} and the surveys~\cite{C14, RS13}. In the light of these developments, it is natural to ask whether with high probability every Freiman homomorphism defined on a random subset of $\Z_N$ must be affine. It turns out to be an easy exercise to show that if the elements of $A$ are chosen independently with probability $CN^{-1/3}$, then this is indeed the case. Recently, Fiz Pontiveros~\cite{F13} proved the significantly harder result that this remains true if the elements of $A$ are chosen with probability $N^{-1/2+o(1)}$. 

In the other direction, it is easy to see that the result is false if the elements of $A$ are chosen with probability $N^{-2/3}/2$. Indeed, for each element $a$, the expected number of triples $(b,c,d)\in A^3$ with $a+b=c+d$ and no two of $a,b,c$ and $d$ equal is at most $N^2(N^{-2/3}/2)^3=1/8$, from which it follows that there is a high probability that there will be an ``isolated" element $a\in A$ that belongs to no non-trivial quadruple $a+b=c+d$. But that implies that however we choose $\phi(a)$ we will not violate the condition for $\phi$ to be a Freiman homomorphism.

With a little more effort, one can show that even if we choose elements independently with probability $c (\log N)^{1/3}N^{-2/3}$, for a sufficiently small positive constant $c$, there is still a significant probability that some of the elements of $A$ will be isolated in this sense. Therefore, the best result that one can hope for is a probability on the order of $(\log N)^{1/3}N^{-2/3}$. We shall obtain a bound of this form.

\begin{theorem} \label{thm:main1}
There exists a positive constant $C$ such that if $A$ is a random subset of $\Z_N$ where each element is chosen independently with probability $C (\log N)^{1/3}N^{-2/3}$, then, with high probability, every Freiman homomorphism from $A$ to $\Z_N$ is affine.
\end{theorem}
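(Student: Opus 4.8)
The plan is to set up a random graph / connectivity argument on the "addition quadruple" hypergraph of $A$, and to show that with high probability this structure is rich enough to pin down any Freiman homomorphism up to an affine map. The key observation is that if $\phi$ is a Freiman homomorphism on $A$ and $a+b=c+d$ with $a,b,c,d\in A$, then $\phi(d)$ is determined by $\phi(a),\phi(b),\phi(c)$; iterating this, once we have fixed the values of $\phi$ on some small "seed" set $S\subseteq A$ (of size at most three algebraically independent points), the value of $\phi$ is forced on the "closure" of $S$ under this propagation rule. So the heart of the matter is to prove that with high probability this closure is all of $A$ — equivalently, that the auxiliary hypergraph on $A$ whose hyperedges are the nontrivial Schur-like quadruples $\{a,b,c,d\}$ with $a+b=c+d$ is "connected" in the sense needed for propagation — and then to check that the forced map is indeed the restriction of an affine map on $\Z_N$.

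Concretely, I would proceed as follows. First, reduce to showing that $A$ has no "isolated pieces": after removing a bounded number of exceptional elements, every element of $A$ lies in many nontrivial quadruples, and moreover $A$ cannot be partitioned into two nonempty parts with no quadruple straddling them. The lower-bound heuristic in the introduction already shows that at the given density $p=C(\log N)^{1/3}N^{-2/3}$ the expected number of quadruples through a fixed element is $\Theta(\log N)$, which is exactly the connectivity threshold; so a first-moment/second-moment computation, combined with a union bound over all ways of splitting $A$, should give that with high probability the quadruple hypergraph on $A$ is connected with room to spare. Second, fix an element $a_0\in A$ and three values to describe the (at most) three degrees of affine freedom. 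Use connectivity to propagate $\phi$ to all of $A$. Third — and this is the step requiring real care — verify consistency: the propagation could a priori assign conflicting values when two propagation paths meet, and one must show that on the random set $A$ no such conflict arises unless $\phi$ is genuinely affine. Here I would argue that any local obstruction to affineness produces a short additive configuration in $A$ (a bounded-complexity pattern in the sense of counting solutions to a fixed system of linear equations), and that at density $N^{-2/3+o(1)}$ the expected number of such configurations, after excluding the degenerate ones, is $o(1)$, so by a union bound none occur. Combining, every Freiman homomorphism on $A$ agrees with an affine map on a connected generating set and hence everywhere, so it is affine.

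The main obstacle I anticipate is the consistency step, because it is not a single clean count: one has to classify the possible "minimal" witnesses to non-affineness (short closed walks in the quadruple hypergraph along which the accumulated constraint is nontrivial) and bound the expected number of each type in the random set. The density $p=CN^{-2/3+o(1)}$ is precisely calibrated so that a quadruple has expected count $\Theta(\log N)$ per element, which is good for connectivity but dangerous for the union bound — it means any witness configuration must involve a genuinely over-determined (i.e. with strictly more than $2$ "free" parameters worth of savings) subsystem, so the delicate part is to prove that every minimal non-affine witness is indeed over-determined in this quantitative sense, and then to make the logarithmic factors in the probability match up against the logarithmic number of configurations one is union-bounding over. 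Two technical ingredients I expect to need are: (i) a supersaturation-type bound controlling how many quadruples pass through a typical pair or triple of elements of $A$, to keep the propagation tree from branching too wildly; and (ii) a careful choice of the order in which elements are revealed, so that at each step the event "this element extends the closure" is conditionally independent enough to apply Chernoff-type concentration. Handling (ii) cleanly — perhaps by a sprinkling argument, revealing $A$ in two rounds, using the first round to build most of the connected closure and the second round to mop up the remaining elements — is likely where most of the work goes.
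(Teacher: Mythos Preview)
Your proposal has a fundamental gap at the propagation step. You want the closure, under the rule ``if $a+b=c+d$ with $a,b,c$ already determined then $d$ is determined'', of a seed of two or three points to be all of $A$. But at density $p = C(\log N)^{1/3}N^{-2/3}$ this closure never gets going: if the current closure has size $k$, there are at most $k^3$ ordered triples $(a,b,c)$ inside it, each forcing a single candidate $d=a+b-c$, and the expected number of such $d$ lying in $A$ is at most $k^3 p$. For $k=O(1)$ this is $o(1)$, so with high probability the closure of any bounded seed is just the seed. This is precisely the phenomenon the introduction highlights: at this density a fixed pair of elements of $A$ almost surely lies in no common non-trivial quadruple, so the local additive structure is far too thin for a greedy propagation to start. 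The regime in which the direct argument you sketch works is $p\sim N^{-1/3}$, which the paper calls ``an easy exercise''.

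The consistency step also runs in the wrong direction. To force every Freiman homomorphism to be affine you need the linear system $\phi(a)+\phi(b)=\phi(c)+\phi(d)$, ranging over additive quadruples in $A$, to have solution space of rank exactly $2$; that requires roughly $|A|-2$ independent constraints, i.e.\ you need many cycle-type configurations to \emph{occur}. Showing that a family of over-determined configurations has expected count $o(1)$ would establish the opposite --- that those extra constraints are absent, leaving more room for non-affine homomorphisms. There is no fixed finite list of ``minimal witnesses to non-affineness'' whose absence forces affineness.

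The paper's route is quite different and genuinely analytic. It first proves a transference principle via the complex Hahn--Banach theorem and a norm built from ``basic anti-uniform functions'': for $f=\mu\cdot(\chi\circ\phi)$ with $\chi$ a character on the target group, one obtains $\|\hat f\|_{12}^{12}\geq 1-O(\eta)$. Averaging over characters turns this into the statement that $\phi$ is an approximate Freiman homomorphism of order $6$, and a dense-setting argument (working on $3U$, which is all of $G$) then extracts a genuine affine map $\alpha$ with $\phi=\alpha$ on at least $(1-\eta)|A|$ elements. Only after this does a connectivity argument enter, and a much weaker one than yours: the paper shows that no subset of $A$ of size at most $\eta|A|$ is additively isolated from its complement, which suffices to upgrade agreement on $(1-\eta)|A|$ to agreement on all of $A$. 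Your connectivity intuition is thus relevant to this finishing step, but it cannot carry the whole proof at density $N^{-2/3+o(1)}$.
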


It is also possible to show by similar methods that if the elements of $A$ are chosen with probability $CN^{-2/3}$, then, with high probability, for every Freiman homomorphism $\phi$ defined on $A$ there is an affine function that agrees with $\phi$ on most of $A$. We have not shown this in detail, because the additional factor of $(\log N)^{1/3}$ that is needed for our main result to be true also simplifies other aspects of the proof. 

Theorem~\ref{thm:main1} is somewhat unexpected because the bound obtained by Fiz Pontiveros is a natural boundary for his method and could lead one to think that his result is best possible. His proof is based on the following idea. If $\phi$ is a Freiman homomorphism defined on $A$, then one can define a function $\psi$ on $A-A$ by taking $\psi(a-b)$ to be $\phi(a)-\phi(b)$. This is well-defined because if $a-b=c-d$, then $\phi(a)-\phi(b)=\phi(c)-\phi(d)$. Fiz Pontiveros proves that $\phi$ is an affine function by noting that $A-A$ is, with high probability, the whole of $\Z_N$ and then proving that $\psi$ is an additive function: that is, $\psi(x+y)=\psi(x)+\psi(y)$ for every $x$ and $y$. This implies that $\psi$ is linear and hence that $\phi$ is affine. 

However, the difference set $A-A$ trivially ceases to be the whole of $\Z_N$ once $p$ goes below $N^{-1/2}$, which is why $N^{-1/2}$ is as far as Fiz Pontiveros's argument will go. Indeed, one can say more. For any non-zero $x$, the expected number of pairs $(a,b)\in A^2$ such that $a-b=x$ is $p^2N$, so if $p$ is substantially less than $N^{-1/2}$, as it will be for us, then this expectation is substantially less than 1. Moreover, even when $x$ \textit{is} an element of $A-A$, this will almost always happen in just one way, so it seems as though it ought to be very hard for the values of $\phi$ on one part of $A$ to influence the values it takes on other parts. However, all is not quite lost, because if $p=C(\log N)^{1/3}N^{-2/3}$, then for each $a\in\Z_N$, and in particular for each $a\in A$, the expected number of triples $(b,c,d)\in A^3$ such that $a+b=c+d$ and no two of $a$, $b$, $c$ and $d$ are equal is roughly $C^3 \log N$. Therefore, a typical element of $A$ will feel the influence from the rest of the set. Nevertheless, it is quite surprising that one can obtain a global result when the average number of such triples is so small.

The proof of Theorem~\ref{thm:main1} has three parts. To begin, we prove a transference principle similar to that used in~\cite{CG10} to prove a number of analogues of combinatorial theorems in sparse random sets. One of the main tools we used there was the finite-dimensional Hahn--Banach separation theorem. In this paper, we shall also use the Hahn--Banach theorem, but this time it is the complex version of the theorem that will be useful to us, and the way we use it will be different. We then use this complex-valued transference principle to prove that any Freiman homomorphism from $A$ to $\Z_N$ agrees on most of $A$ with an affine function. In~\cite{CG10}, our results were usually straightforward corollaries of the transference principle. Here, a number of additional ideas are needed to make the argument work. Finally, we conclude with a short argument showing that any function which is affine on most of $A$ is actually affine on all of $A$. Since our proof does not depend in a serious way on the structure of $\Z_N$, we shall prove the following more general result.

\begin{theorem} \label{thm:main2}
There exists a positive constant $C$ such that if $G$ is an Abelian group of order $n$ and $U$ is a random subset of $G$ where each element is chosen independently with probability $C(\log n)^{1/3}n^{-2/3}$, then, with high probability, every Freiman homomorphism from $U$ to an Abelian group $H$ can be extended to a Freiman homomorphism defined on all of $G$. 
\end{theorem}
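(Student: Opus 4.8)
The plan is to follow the three-part structure outlined in the introduction, and I would organize it as follows. First, reduce Theorem~\ref{thm:main2} to the special case $H = G = \Z_n$ where $G$ is cyclic; more generally, replace $H$ by the subgroup of it generated by $\phi(U) - \phi(U)$, which is finitely generated, and then one wants to pass to a cyclic (or bounded-rank Abelian) setting. Actually the cleanest route is to argue directly in $G$: a Freiman homomorphism $\phi$ on $U$ extends to all of $G$ if and only if it is ``affine'' in the sense that the induced map $\psi$ on $U-U$ (defined by $\psi(a-b)=\phi(a)-\phi(b)$, well-defined by the Freiman condition) is the restriction of a homomorphism $G \to H$. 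So the real content is: with high probability, for every Freiman homomorphism $\phi$ on $U$, the partial function $\psi$ on $U - U$ extends to a group homomorphism. I would set up the problem so that the target group structure plays no role beyond linearity, which is why it suffices to track the Fourier-analytic behavior of $\psi$ against characters — this is where the complex Hahn--Banach theorem enters, replacing the real separation argument of~\cite{CG10}.

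The main steps, in order. (1) \emph{Transference.} Establish a complex-valued transference principle: if $f$ is a bounded function supported on the random set $U$ (suitably normalized to density $\approx 1$), then $f$ can be decomposed as $f = g + h$ where $g$ is bounded on all of $G$ and $h$ has small ``dual norm'' (small $U^2$- or Fourier-type norm appropriate to counting Schur-type configurations $a+b=c+d$). This is proved via complex Hahn--Banach: the relevant bounded-ness conditions cut out a convex set, and one separates $f$ from it by a linear functional, which one then shows must itself have small dual norm using the pseudorandomness of $U$ (a second-moment / concentration estimate on the number of additive quadruples through a fixed point, which is where $p = C(\log n)^{1/3} n^{-2/3}$ and the logarithmic factor are used). (2) \emph{Most of $U$ is affine.} Given a Freiman homomorphism $\phi$ on $U$, encode it via the phase function $a \mapsto \chi(\phi(a))$ for characters $\chi$, transfer to get a bounded model on $G$, and use the fact that $\phi$ respects all additive quadruples in $U$ to deduce a large Fourier coefficient of the model; reading this back gives an affine function agreeing with $\phi$ on all but an $o(|U|)$ fraction of $U$. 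This is the step where, as the authors warn, ``a number of additional ideas are needed'' beyond a clean corollary of transference — I expect one must iterate or localize to handle several characters simultaneously and to get the exceptional set genuinely small. (3) \emph{Bootstrapping to all of $U$.} Suppose $\phi$ agrees with an affine $\ell$ on $U \setminus E$ with $|E|$ small. For $a \in E$, with high probability (over the choice of $U$, using $p \ge C(\log n)^{1/3} n^{-2/3}$) there are many triples $(b,c,d) \in (U\setminus E)^3$ with $a + b = c + d$; each such triple forces $\phi(a) = \ell(c) + \ell(d) - \ell(b) = \ell(a)$. A union bound over $a$ (the number of bad triples through any fixed point is $\approx C^3 \log n \gg \log n$ in expectation, with the log factor exactly calibrated so that ``every point sees a good triple'' holds whp — the same calculation that the introduction uses to show $N^{-2/3}$ fails without the log) finishes the proof.

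The hard part will be step (2): converting ``$\phi$ has a large Fourier coefficient on the transferred model'' into ``$\phi$ agrees with an affine function on almost all of $U$''. In the dense setting this is routine inverse Fourier analysis, but here the model is only defined up to the transference error $h$, and $h$'s error must be controlled simultaneously for enough characters $\chi$ of $H$ to pin down a single affine map $\ell : G \to H$ (not just a separate one per character). Coordinating these, and ensuring the exceptional set $E$ is small enough that step (3)'s counting argument applies, is the technical heart of the argument; the logarithmic factor in $p$ is what makes both the transference concentration bound and the final bootstrap go through cleanly, which is presumably why the authors chose to prove the $(\log n)^{1/3} n^{-2/3}$ statement rather than the harder $n^{-2/3}$ one.
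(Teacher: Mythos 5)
Your outline reproduces the three-part architecture announced in the introduction, but both of the substantive steps are left in a state where the argument would not go through as described. The clearest failure is in your step (3). You propose to fix $a\in E$, observe that with high probability there are about $C^3\log n$ triples $(b,c,d)\in U^3$ with $a+b=c+d$, and take a union bound over $a$. But $E$ is not a fixed set: it depends on the Freiman homomorphism $\phi$, hence on $U$ itself, and you need the triples to lie in $(U\setminus E)^3$. Since a given point $a$ lies in only $O(\log n)$ non-degenerate quadruples, an adversarially placed $E$ of size $\eta|U|$ can easily absorb \emph{all} of the quadruples through some particular $a$ (only an averaging argument prevents this for \emph{most} $a$, which is not enough). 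What is actually needed is the much stronger statement that with high probability \emph{no} subset $W\subset U$ with $|W|\le\eta|U|$ is additively isolated, i.e.\ $(V+V-V)\cap W\ne\emptyset$ for $V=U\setminus W$; this requires a union bound over exponentially many candidate sets $W$, and the paper spends a whole section on it (an Azuma concentration bound for $|A+K|$ and a careful estimate beating $\binom{t}{k}$). Note also that once one such triple is found, one point of $W$ is determined and the argument must be iterated, which is why the isolation formulation is the right one.

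Step (2) is where you correctly sense the difficulty but do not supply the idea that resolves it. The paper does not try to coordinate large Fourier coefficients across characters $\chi$ of $H$ to ``pin down a single affine map.'' Instead, from the transference principle it deduces $\|\hat f_\chi\|_{12}^{12}\ge 1-36\eta$ for every character $\chi$ of $H$, and then \emph{averages over $\chi$}: by orthogonality this converts the family of Fourier statements into a single character-free combinatorial statement, namely that almost every ($\mu$-weighted) additive $12$-tuple in $U$ is respected by $\phi$, i.e.\ $\phi$ is almost everywhere a Freiman homomorphism of order $6$. Since $U+U+U$ covers $G$ at this density (whereas $U-U$ does not --- your opening reformulation via the induced map on $U-U$ is exactly the Fiz Pontiveros route that the paper explains cannot work below $n^{-1/2}$), one can then build from triples a ``fuzzy'' function $\psi:G\to\pi(H)$ taking probability-distribution values, set $\theta=\psi*\psi_-^-$, and prove via an approximate triangle inequality that $\theta$ concentrates at the values of a genuine group homomorphism $\gamma:G\to H$; an averaging argument then yields the affine map agreeing with $\phi$ on $99.99\%$ of $U$. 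A further point worth absorbing: the counting functional used in the transference step must be restricted to \emph{non-degenerate} quadruples (at density $n^{-2/3}$ the degenerate ones dominate), so it is not the $U^2$ norm and is not even a norm; the norm fed to the Hahn--Banach argument is built separately from basic anti-uniform functions.
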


Note that a Freiman homomorphism defined on all of $G$ is simply an affine homomorphism: that is, a map of the form $g\mapsto \phi(g)+h$ for some group homomorphism $\phi$ and some $h\in H$. Thus, the result is saying that all Freiman homomorphisms defined on $U$ are restrictions of affine homomorphisms.

\section{Consequences of the complex finite-dimensional Hahn--Banach theorem}

The version of the Hahn--Banach theorem that we shall rely on is the following. Recall that, given a norm $\|.\|$ on a vector space $X$, the dual norm $\|.\|^*$ of $\|.\|$ is a norm on the collection of linear functionals $\phi$ acting on $X$, given by
\[\|\phi\|^* = \sup\{|\phi(x)|: \|x\| \leq 1\}.\]

\begin{theorem} \label{hahnbanach}
Let $\|.\|$ be a norm on $\C^n$ for some positive integer $n$ and let $\phi$ be a linear functional defined on a subspace $X$ of $\C^n$. Then $\phi$ can be extended to a linear functional $\psi$ on $\C^n$ with $\|\psi\|^*=\|\phi\|^*$. 
\end{theorem}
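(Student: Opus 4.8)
The plan is to deduce the complex statement from the classical \emph{real} finite-dimensional Hahn--Banach extension theorem by passing to real parts. Write $M=\|\phi\|^*$ and let $p$ be the functional on $\C^n$ given by $p(x)=M\|x\|$; the triangle inequality and homogeneity of $\|\cdot\|$ make $p$ subadditive and positively homogeneous. Regard $\C^n$ as a real vector space of dimension $2n$, with $X$ sitting inside it as a real subspace, and set $u=\mathrm{Re}\,\phi$, which is a real-linear functional on $X$. Since $u(x)\le|\phi(x)|\le M\|x\|=p(x)$ for every $x\in X$, the pair $(u,X)$ satisfies the hypotheses of the real Hahn--Banach theorem.

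First I would recall the proof of the real theorem, which is short: one extends $u$ one real dimension at a time, and at each stage, after choosing a vector $z$ outside the current subspace, one needs a value $c$ for the functional at $z$ with $f(y')-p(y'-z)\le c\le p(y''+z)-f(y'')$ for all $y',y''$ in that subspace; such a $c$ exists because $f(y'+y'')\le p(y'+y'')\le p(y''+z)+p(y'-z)$. As the codimension is finite, this terminates and yields a real-linear $v:\C^n\ra\R$ with $v|_X=u$ and $v\le p$ everywhere; replacing $x$ by $-x$ and using $p(-x)=p(x)$ then gives $|v(x)|\le M\|x\|$ for all $x$.

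The second step is to reconstitute a complex-linear functional. Define $\psi(x)=v(x)-iv(ix)$. This is plainly real-linear, and the computation $\psi(ix)=v(ix)+iv(x)=i\psi(x)$ shows that it is in fact complex-linear. Since $X$ is a complex subspace, $ix\in X$ whenever $x\in X$, so on $X$ we have $\psi(x)=u(x)-iu(ix)=\mathrm{Re}\,\phi(x)-i\,\mathrm{Re}(i\phi(x))=\mathrm{Re}\,\phi(x)+i\,\mathrm{Im}\,\phi(x)=\phi(x)$, so $\psi$ does extend $\phi$. For the norm, fix $x\in\C^n$ and choose a unit scalar $\lambda\in\C$ with $\lambda\psi(x)=|\psi(x)|$; then
\[|\psi(x)|=\psi(\lambda x)=\mathrm{Re}\,\psi(\lambda x)=v(\lambda x)\le M\|\lambda x\|=M\|x\|,\]
the last equality being precisely the complex homogeneity $\|\lambda x\|=|\lambda|\,\|x\|=\|x\|$ of the given norm. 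Hence $\|\psi\|^*\le M$, while $\|\psi\|^*\ge\|\phi\|^*=M$ is immediate since $\psi$ agrees with $\phi$ on the unit ball of $X$, which lies inside the unit ball of $\C^n$. Therefore $\|\psi\|^*=\|\phi\|^*$.

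There is no serious obstacle here, as this is essentially a textbook argument; the two points that must be handled with care are the real-part/complex-linear correspondence, so that genuine complex-linearity (not merely real-linearity) is recovered, and the appeal to complex homogeneity of the norm in the final display, which is what makes the extension norm-preserving rather than merely bounded by a constant multiple of $M$.
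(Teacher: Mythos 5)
Your proof is correct: it is the standard Bohnenblust--Sobczyk reduction of the complex Hahn--Banach theorem to the real one (extend $\mathrm{Re}\,\phi$ dominated by $M\|\cdot\|$, then recover complex linearity via $\psi(x)=v(x)-iv(ix)$ and use a unimodular scalar together with complex homogeneity of the norm to get the sharp bound). The paper states this theorem without proof, treating it as a known fact, and your argument is precisely the textbook proof it implicitly relies on; all the delicate points (complex linearity of $\psi$, the identity $\psi=\phi$ on $X$, and the norm equality rather than a mere bound) are handled correctly.
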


In particular, this theorem has the following corollary. Recall that a subset of $\C^n$ is \textit{absolutely convex} if it is convex and closed under multiplication by scalars of modulus 1. 

\begin{corollary} \label{hbcor}
Let $K$ and $L$ be two closed bounded absolutely convex subsets of $\C^n$ and suppose that $0$ belongs to the interior of $K+L$. Let $v\in\C^n$ be a vector that does not belong to $K+L$. Then there exists a linear functional $\psi$ such that $\psi(v)>1$ and $|\psi(w)|\leq 1$ for every $w\in K\cup L$.
\end{corollary}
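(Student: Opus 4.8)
The plan is to realise $K+L$ as the closed unit ball of a norm on $\C^n$ and then apply Theorem~\ref{hahnbanach} to a linear functional defined on the line spanned by $v$.

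First I would record two preliminary observations. Since $0$ lies in the interior of $K+L$, this set is nonempty, and hence so are $K$ and $L$; and a nonempty absolutely convex set contains $0$, since if it contains $x$ then it contains $-x$ and hence the midpoint $0$. Thus $K\cup L\subseteq K+L$, so it suffices to produce $\psi$ with $\psi(v)>1$ and $|\psi(w)|\le 1$ for all $w\in K+L$. Second, $K$ and $L$, being closed and bounded subsets of the finite-dimensional space $\C^n$, are compact, so $K+L$ is compact, and in particular closed and bounded. Note also that $v\neq 0$, since $0\in K+L$.

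Next I would introduce the Minkowski functional $\|x\|=\inf\{t>0:x\in t(K+L)\}$ and check that it is a norm on $\C^n$ whose closed unit ball is exactly $K+L$. Finiteness of $\|x\|$ follows from $0$ being interior to $K+L$; positive-definiteness follows from the boundedness of $K+L$; the triangle inequality follows from the convexity of $K+L$; and the identity $\|\lambda x\|=|\lambda|\,\|x\|$ for $\lambda\in\C$ follows from the fact that $K+L$ is invariant under multiplication by scalars of modulus $1$ (this is where absolute convexity, as opposed to mere convexity, is used) together with $0\in K+L$. That the closed unit ball of $\|\cdot\|$ equals $K+L$ uses both the closedness of $K+L$ and, again, its convexity together with $0\in K+L$. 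Since $v\notin K+L$, we conclude $\|v\|>1$.

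Finally, on the one-dimensional subspace $X=\C v$ I would define the linear functional $\phi(\lambda v)=\lambda\|v\|$; by the definition of the dual norm, $\|\phi\|^*=1$. Theorem~\ref{hahnbanach} then extends $\phi$ to a linear functional $\psi$ on $\C^n$ with $\|\psi\|^*=1$. Then $\psi(v)=\|v\|>1$, while for any $w\in K+L$ we have $\|w\|\le 1$ and hence $|\psi(w)|\le\|\psi\|^*\|w\|\le 1$, which gives the conclusion. The only step requiring any real care is the verification that the Minkowski functional is genuinely a norm and that its unit ball is precisely $K+L$; once the compactness of $K+L$ and the two consequences of absolute convexity (invariance under unit scalars, and membership of $0$) are in hand, each of these points is a routine one-line check, and the application of Hahn--Banach is immediate.
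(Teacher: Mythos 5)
Your proof is correct and follows essentially the same route as the paper: construct a norm for which $v$ has norm greater than $1$ while $K\cup L$ lies in the unit ball, define a functional of dual norm $1$ on the line $\C v$, and extend it by Theorem~\ref{hahnbanach}. The only difference is the choice of norm: you take the Minkowski functional of $K+L$ itself (so the unit ball is exactly $K+L$, and you need your preliminary observation that $K\cup L\subseteq K+L$), whereas the paper uses $\|x\|=\inf\{|\lambda|+|\mu|:x\in\lambda K+\mu L\}$, whose unit ball is the absolutely convex hull of $K\cup L$ and which requires a short compactness argument to see that $\|v\|>1$; both choices work equally well here.
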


\begin{proof}
Define a norm $\|.\|$ on $\C^n$ by $\|v\|=\inf\{|\lambda|+|\mu|:v\in\lambda K+\mu L\}$. The absolute convexity of $K$ and $L$ guarantees that this is a norm. Then $\|v\|>1$, since otherwise for every $\e>0$ we would be able to find $x\in K$ and $y\in L$ and complex scalars $\lambda$ and $\mu$ with $|\lambda|+|\mu|\leq 1+\e$ such that $v=\lambda x+\mu y$. By the compactness of $K$ and $L$ and the fact that both are closed under multiplication by scalars of modulus at most $1$, it would then follow that $v\in K+L$. 

Since $\|v\|>1$, the linear functional $\phi$ defined on the 1-dimensional subspace generated by $v$ given by $\phi(\lambda v)=\lambda\|v\|$ has dual norm $1$, and $\phi(v)=\|v\|>1$. By Theorem \ref{hahnbanach}, we can extend $\phi$ to a linear functional $\psi$ defined on all of $\C^n$ such that $\|\psi\|^*\leq 1$. From this last property, we see that if $w\in K$ then $|\psi(w)|\leq\|w\|\leq 1$, and similarly if $w\in L$. This proves the lemma.
\end{proof}

The main result of this section is the following further corollary. We will use the fact that every linear functional on a Hilbert space is of the form $\sp{\, \cdot \, ,\phi}$ for some $\phi$, where the inner product of two functions $f, g: X \rightarrow \C$ is given by $\sp{f, g} = \E_x f(x) \overline{g(x)}$. By saying that a function is a \emph{measure} on a finite set $X$, we mean that it is a non-negative function from $X$ to $\R$.

\begin{corollary} \label{maintool}
Let $\mu$ and $\nu$ be measures on a finite set $X$ and let $\|.\|$ be a norm on $\C^X$. Suppose that $|\sp{\mu-\nu,|\phi|}|\leq\e$ for every function $\phi$ such that $\|\phi\|^*\leq\eta^{-1}$. Let $f$ be a function such that $|f|\leq\mu$. Then there exists a function $g$ such that $0\leq|g|\leq(1-\e)^{-1}\nu$ and $\|f-g\|\leq\eta$.
\end{corollary}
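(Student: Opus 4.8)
The plan is to recognise the desired conclusion as the assertion that $f$ lies in a certain sumset $K+L$, and to derive a contradiction from Corollary~\ref{hbcor} if it does not. I would take $L$ to be the closed $\|.\|$-ball of radius $\eta$ and $K$ to be the ``polydisc'' $\{g\in\C^X:|g(x)|\le(1-\e)^{-1}\nu(x)\text{ for all }x\in X\}$. Both sets are closed, bounded and absolutely convex: $K$ because each constraint $|g(x)|\le r_x$ cuts out an absolutely convex set, and $L$ because it is a ball in a norm. Since $\eta>0$, the ball $L$ contains a neighbourhood of the origin, and as $0\in K$ the same is true of $K+L$. The point of this choice is that $f\in K+L$ says precisely that $f=g+h$ with $|g|\le(1-\e)^{-1}\nu$ pointwise and $\|f-g\|=\|h\|\le\eta$, which is exactly the statement we are trying to prove.

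So suppose, for contradiction, that $f\notin K+L$. Corollary~\ref{hbcor} then supplies a linear functional $\psi$ with $\psi(f)>1$ and $|\psi(w)|\le1$ for every $w\in K\cup L$; write $\psi=\sp{\cdot,\phi}$ using the Hilbert space structure. From the bound on $L$ I would read off that $\eta\|\phi\|^*\le1$, i.e.\ $\|\phi\|^*\le\eta^{-1}$, so that the hypothesis of the corollary applies to this particular $\phi$. From the bound on $K$, choosing $w(x)$ of modulus $(1-\e)^{-1}\nu(x)$ with phase aligned to that of $\phi(x)$ shows that $\sup_{w\in K}|\psi(w)|=(1-\e)^{-1}\sp{\nu,|\phi|}$, and hence $\sp{\nu,|\phi|}\le1-\e$.

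It then remains to estimate $\psi(f)$ itself. Since $|f|\le\mu$ pointwise,
\[
|\psi(f)|=\Big|\E_x f(x)\overline{\phi(x)}\Big|\le\E_x\mu(x)|\phi(x)|=\sp{\mu,|\phi|}=\sp{\nu,|\phi|}+\sp{\mu-\nu,|\phi|}\le(1-\e)+\e=1,
\]
where the last inequality uses the bound just obtained together with $\sp{\mu-\nu,|\phi|}\le|\sp{\mu-\nu,|\phi|}|\le\e$. This contradicts $\psi(f)>1$, so in fact $f\in K+L$, and decomposing $f=g+h$ accordingly produces the required $g$.

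I do not expect any genuine obstacle: the whole argument is bookkeeping around Corollary~\ref{hbcor} once the sets $K$ and $L$ have been chosen correctly. The two points that call for a little care are the supremum computations --- that $\sup_{\|w\|\le\eta}|\psi(w)|=\eta\|\phi\|^*$ and that the supremum of $|\psi(w)|$ over the polydisc $K$ is realised by aligning phases at each coordinate --- together with the trivial check that the nondegeneracy hypothesis of Corollary~\ref{hbcor} holds, for which it is enough that $\eta>0$ and $\e<1$.
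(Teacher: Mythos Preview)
Your argument is correct and follows essentially the same route as the paper: the same sets $K$ and $L$, the same application of Corollary~\ref{hbcor}, the same phase-alignment computation to get $\sp{\nu,|\phi|}\le 1-\e$, and the same chain of inequalities to reach a contradiction. If anything you are slightly more careful, explicitly noting that $0$ lies in the interior of $K+L$ because $L$ is a norm ball of positive radius.
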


\begin{proof}
Suppose that we cannot find such a $g$. Then $f\notin K+L$, where $K=\{g:0\leq|g|\leq(1-\e)^{-1}\nu\}$ and $L=\{h:\|h\|\leq\eta\}$. Since both $K$ and $L$ are closed, bounded, and absolutely convex, it follows from Corollary \ref{hbcor} that there exists $\phi$ such that $\sp{f,\phi}>1$, $|\sp{g,\phi}|\leq(1-\e)$ whenever $0\leq|g|\leq\nu$, and $|\sp{h,\phi}|\leq 1$ whenever $\|h\|\leq\eta$. The third condition tells us that $\|\phi\|^*\leq\eta^{-1}$. The second tells us that $\sp{\nu,|\phi|}\leq 1-\e$, since the function $g$ that maximizes $|\sp{g,\phi}|$ subject to the constraint that $|g|\leq\nu$ is the function $g(x)=\nu(x)e^{i \mathrm{arg}(\phi(x))}$, and for that $g$ we have $\sp{g,\phi}=\sp{\nu,|\phi|}$. From these facts and our hypothesis we deduce that
\begin{equation*}
1<\sp{f,\phi}\leq\sp{|f|,|\phi|}\leq\sp{\mu,|\phi|}\leq\e+\sp{\nu,|\phi|}\leq 1,
\end{equation*}
a contradiction.
\end{proof}

\section{A norm to which the Hahn--Banach argument will be applied}

Let $G$ be an Abelian group of order $n$ and let $\Gamma$ be the set of all non-degenerate additive quadruples in $G$. That is, $\Gamma$ is the set of all quadruples $(x,y,z,w)$ such that $x+y=z+w$ and neither $x$ nor $y$ is equal to $z$ or $w$. For every function $f$ we define a quantity $M(f)$ to be $\E_{(x,y,z,w)\in\Gamma}f(x)f(y)\ol{f(z)f(w)}$.

Before we continue, we make a quick remark. It is important to consider non-degenerate additive quadruples only, since in a random set of density $n^{-2/3}$, the non-degenerate additive quadruples are swamped by the degenerate ones. Indeed, the number of non-degenerate additive quadruples is approximately $n^{3-8/3}=n^{1/3}$, whereas the number of degenerate additive quadruples is around $(n^{1/3})^2=n^{2/3}$. (The density at which there are roughly equal numbers of degenerate and non-degenerate additive quadruples is $n^{1/2}$, which is another reason for the natural-seeming barrier there.) At first this appears to be a serious problem, because the quantity $M(f)$ that we have defined is not the fourth power of a norm. However, it turns out not to matter, because the norm we use is constructed in a different way from the $U^2$ norm.

Let $U$ be a random subset of $G$ with characteristic measure $\mu$. (The \textit{characteristic measure} of a set is its characteristic function divided by its density. Thus, $\mu$ is zero outside $U$, constant inside $U$, and has average value 1.) We shall show that, with high probability, for every function $f$ from $G$ to $\C$ such that $|f|\leq\mu$ there exists a function $g$ with the following three properties:

\begin{enumerate}
\item $\|g\|_\infty\leq 1$;
\item $M(g)\approx M(f)$;
\item $\langle g,\tau\rangle\approx \langle f,\tau\rangle$ for every character $\tau:G\to\mathbb{C}$.
\end{enumerate}

The approach we use to obtain this transference result is closely related to the approach we used in \cite{CG10}, though in that paper we used the real Hahn--Banach theorem. We begin by defining a norm $\|.\|$ with the property that if $\|f-g\|$ is small, then conclusions 2 and 3 above hold. For this, we begin with a simple observation. Write $M(f,g,h,k)$ for the quantity $\E_{(x,y,z,w)\in\Gamma}f(x)g(y)\ol{h(z)k(w)}$. (Thus, $M(f)$ can be thought of as shorthand for $M(f,f,f,f)$.) Then $M$ is additive in all four variables, so
\begin{equation*}
M(f)-M(g)=M(f-g,f,f,f)+M(g,f-g,f,f)+M(g,g,f-g,f)+M(g,g,g,f-g).
\end{equation*}
Each of the four expressions on the right-hand side can be regarded as the inner product of $f-g$ with another function. For example, $M(g,g,f-g,f)=\langle h,f-g\rangle$, where $h(z)=\E_{(x,y,z,w)\in\Gamma}g(x)g(y)\ol{f(w)}$. This motivates the following definitions. First, for any $x\in\Z_N$, let us define $\Gamma_x$ to be the set of triples $(y,z,w)$ such that $(x,y,z,w)\in\Gamma$. These triples satisfy the equation $z+w-y=x$, but they also satisfy the non-degeneracy condition.

Now define a \textit{basic anti-uniform function} to be any function from $G$ to $\C$ of the form 
\[u(x)=\E_{(y,z,w)\in\Gamma_x}\ol{h_1(y)}h_2(z)h_3(w),\]
 where each $h_i$ either satisfies $\|h_i\|_\infty\leq 1$ or $|h_i(v)| \leq \mu(v)$ for every $v$. Then each of the four terms on the right-hand side of the equation above is the inner product of $f-g$ with some basic anti-uniform function (with $f-g$ either on the left or on the right). Therefore, if all such inner products are small, then $M(f)\approx M(g)$. If $h_1=h_2=h_3=\tau$ for some character $\tau$, then $u=\tau$, since for every $(y,z,w)\in\Gamma_x$ we have $\ol{\tau(y)}\tau(z)\tau(w)=\tau(x)$. Therefore, all characters are basic anti-uniform functions and we also obtain the third condition.

We now define a norm $\|.\|$ by setting $\|f\|$ to be the maximum of $|\sp{f,u}|$ over all basic anti-uniform functions $u$. From the discussion above, the following lemma follows easily.

\begin{lemma}
Let $G$ be a finite Abelian group, let $f$ and $g$ be functions from $G$ to $\C$, and let $\|.\|$ be the norm just defined. Then $|M(f)-M(g)|\leq 4\|f-g\|$.
\end{lemma}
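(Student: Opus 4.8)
The plan is to combine the multilinearity of $M$ with the definition of the norm $\|\cdot\|$ as the maximum of $|\langle\cdot,u\rangle|$ over basic anti-uniform functions $u$. First I would record the telescoping identity already displayed in the text,
\[M(f)-M(g)=M(f-g,f,f,f)+M(g,f-g,f,f)+M(g,g,f-g,f)+M(g,g,g,f-g),\]
which is immediate from the additivity of $M$ in each of its four slots. It therefore suffices to show that each of the four summands on the right has modulus at most $\|f-g\|$.

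To do this I would identify each summand as $\langle f-g,u\rangle$ or $\langle u,f-g\rangle$ for a basic anti-uniform function $u$ assembled from three of the functions among $f$ and $g$. For the first term, grouping the average $\E_{(x,y,z,w)\in\Gamma}(f-g)(x)f(y)\ol{f(z)f(w)}$ by the value of $x$ and using that $|\Gamma_x|$ is independent of $x$, so that $|\Gamma_x|/|\Gamma|=1/n$, produces $\langle f-g,u_1\rangle$ with $u_1(x)=\E_{(y,z,w)\in\Gamma_x}\ol{f(y)}f(z)f(w)$. That $|\Gamma_x|$ does not depend on $x$ is a one-line count, valid in any finite Abelian group: for fixed $x$ there are $n^2$ additive quadruples $(x,y,z,w)$, of which exactly $2n-1$ are degenerate, so $|\Gamma_x|=(n-1)^2$. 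The function $u_1$ is then a basic anti-uniform function with $h_1=h_2=h_3=f$; here one uses that each of $f$ and $g$ is dominated pointwise by $\mu$ or satisfies $\|\cdot\|_\infty\le 1$, as in all the applications of the lemma. For the remaining three summands I would exploit the symmetries of $\Gamma$---its invariance under $x\leftrightarrow y$, under $z\leftrightarrow w$, and under $(x,y,z,w)\mapsto(z,w,x,y)$---to move the factor $f-g$ into the first coordinate, picking up a complex conjugation whenever it is moved out of one of the last two slots. For instance $M(g,g,f-g,f)$ becomes $\langle h,f-g\rangle$ with $h(x)=\E_{(y,z,w)\in\Gamma_x}\ol{f(y)}g(z)g(w)$, once more a basic anti-uniform function, and the other two terms are entirely analogous.

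To conclude, $|\langle f-g,u\rangle|\le\|f-g\|$ for every basic anti-uniform $u$ by the definition of the norm, and $|\langle u,f-g\rangle|=|\langle f-g,u\rangle|$ since $\langle u,f-g\rangle=\overline{\langle f-g,u\rangle}$; hence each of the four summands has modulus at most $\|f-g\|$, and the triangle inequality gives $|M(f)-M(g)|\le 4\|f-g\|$. I expect the only slightly delicate part to be the bookkeeping in the middle step: keeping track of the complex conjugations and of the permutation of the three inner variables so that each summand is seen to be an inner product of $f-g$ against a genuine basic anti-uniform function (and not merely a scalar multiple of one), and checking that the three slots not occupied by $f-g$ are filled with admissible functions. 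Everything else is purely formal.
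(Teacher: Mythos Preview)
Your proof is correct and is exactly the argument the paper has in mind: the paper's own proof is a single sentence pointing back to the telescoping identity and the observation that each term is an inner product of $f-g$ with a basic anti-uniform function, and you have simply filled in the details (the count $|\Gamma_x|=(n-1)^2$, the use of the symmetries of $\Gamma$, and the handling of conjugation). Your remark that the argument tacitly requires each of $f$ and $g$ to be bounded by $1$ or by $\mu$---so that the resulting $u$'s are genuine basic anti-uniform functions---is a fair observation; the paper states the lemma for arbitrary $f,g$ but only ever applies it under that hypothesis.
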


\begin{proof}
As commented above, $M(f)-M(g)$ can be written as a sum of four terms, each of which is an inner product of $f-g$ with a basic anti-uniform function.
\end{proof}

We now want to apply Corollary \ref{maintool}. To do that, we need an expression for the dual norm of the norm $\|.\|$ we have just defined. The following lemma is a standard fact, which can be proved easily with the help of the Hahn--Banach theorem.

\begin{lemma} \label{dualnorm}
Let $X$ be a finite set and let $\|.\|$ be a norm on $\C^X$ defined by a formula of the form $\|f\|=\max\{|\sp{f,\psi}|:\psi\in\Psi\}$. Then the dual norm $\|.\|^*$ is given by the formula 
\[\|\phi\|^*=\inf\{\sum_{i=1}^n|a_i|:\phi=\sum_{i=1}^na_i\psi_i,\ \psi_i\in\Psi\}.\]
\end{lemma}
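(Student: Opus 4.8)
The plan is to prove the two inequalities separately. Write $\Lambda(\phi)$ for the infimum on the right-hand side of the claimed formula (with $\Lambda(\phi)=\infty$ if $\phi$ is not a linear combination of elements of $\Psi$). Two preliminary remarks: first, for $\|.\|$ to be a genuine norm we need $\|f\|=0$ to force $f=0$, and since $\|f\|=0$ says precisely that $f$ is orthogonal to every element of $\Psi$, this means $\Psi$ must span $\C^X$; in particular $\Lambda$ is finite-valued. Second, for the maximum defining $\|.\|$ to be attained we should take $\Psi$ compact, which I will assume throughout (in the intended application $\Psi$ is compact, and the general case reduces to it by passing to the closure).

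The bound $\|\phi\|^*\le\Lambda(\phi)$ is immediate. If $\phi=\sum_{i=1}^n a_i\psi_i$ with each $\psi_i\in\Psi$, then for any $f$ with $\|f\|\le 1$ the (conjugate-)linearity of the inner product gives $\langle f,\phi\rangle=\sum_{i=1}^n\overline{a_i}\langle f,\psi_i\rangle$, so $|\langle f,\phi\rangle|\le\sum_i|a_i|\,|\langle f,\psi_i\rangle|\le\sum_i|a_i|$ using $|\langle f,\psi_i\rangle|\le\|f\|\le 1$. Taking the supremum over $f$ and the infimum over representations of $\phi$ gives the claim.

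For the reverse inequality the real work is a separation argument, and it is enough to show that $\Lambda(\phi)>1$ implies $\|\phi\|^*>1$; the full statement $\|\phi\|^*\ge\Lambda(\phi)$ then follows by applying this to $\phi/t$ for $0<t<\Lambda(\phi)$ and letting $t\to\Lambda(\phi)$, since both sides are homogeneous of degree $1$. So suppose $\Lambda(\phi)>1$, which says exactly that $\phi\notin K$ for $K=\{\sum_i a_i\psi_i:\psi_i\in\Psi,\ \sum_i|a_i|\le 1\}$. As the absolutely convex hull of a compact subset of the finite-dimensional space $\C^X$, the set $K$ is closed, bounded and absolutely convex, and because $\Psi$ spans, the balanced convex set $K$ has $0$ in its interior. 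Corollary~\ref{hbcor}, applied with this $K$ and with $L=\{0\}$, therefore yields a linear functional $\Theta$ on $\C^X$ with $\Theta(\phi)>1$ and $|\Theta(w)|\le 1$ for all $w\in K$, in particular for all $w\in\Psi$. Writing $\Theta=\langle\,\cdot\,,g\rangle$ for the vector $g\in\C^X$ that represents it, the bound over $\Psi$ reads $\|g\|=\max_{\psi\in\Psi}|\langle g,\psi\rangle|=\max_{\psi\in\Psi}|\Theta(\psi)|\le 1$, while $|\langle g,\phi\rangle|=|\Theta(\phi)|>1$; hence $\|\phi\|^*\ge|\langle g,\phi\rangle|>1$, as required.

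I expect the separation step to be the only point requiring care: one must check that $K$ is genuinely closed (this is where compactness of $\Psi$ is used) and that $0$ lies in its interior (forced by $\Psi$ spanning $\C^X$, i.e.\ by $\|.\|$ being a norm), and one must keep track of the conjugation in the inner product when translating between $\Theta$ and the representing vector $g$ — though since only moduli matter in the end, this is harmless. Everything else is routine bookkeeping.
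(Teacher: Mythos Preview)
Your proof is correct and follows exactly the route the paper indicates: the paper does not give a proof but simply remarks that the lemma ``is a standard fact, which can be proved easily with the help of the Hahn--Banach theorem,'' and your argument carries this out, using Corollary~\ref{hbcor} (the paper's Hahn--Banach separation consequence) for the non-trivial inequality. Your care over compactness of $\Psi$, the spanning condition, and the conjugation in the inner product is well placed and correctly handled.
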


The fact that $\C^X$ is finite-dimensional implies also that this infimum is attained. So the condition in Corollary \ref{maintool} that $\|\phi\|^*\leq\eta^{-1}$ implies that $\phi$ can be written as a linear combination of basic anti-uniform functions with the absolute values of the coefficients summing to at most $\eta^{-1}$.

We want to apply Corollary \ref{maintool} with $\mu$ being the characteristic measure of a sparse random set $U$ and with $\nu$ being the constant function 1. Therefore, we need to establish that $|\sp{\mu-1,|\phi|}|$ is small whenever $\phi$ is of the form just described. Before we start on this, we must find a way to deal with the fact that we are looking at $|\phi|$, which cannot be described as easily as $\phi$. This we do with the help of the following lemma, which is a special case of the Stone--Weierstrass theorem.

\begin{lemma} \label{weierstrass}
For every pair of real numbers $C,\e>0$ there exists a polynomial $P$ in $z$ and $\overline{z}$ that uniformly approximates the function $|z|$ to within $\e$ on the disc $\{z:|z|\leq C\}$.
\end{lemma}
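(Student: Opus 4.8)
The plan is to reduce the statement to the classical one-variable Weierstrass approximation theorem. The key observation is that on the disc $\{z:|z|\le C\}$ one has $|z|=\sqrt{z\overline z}$, and that as $z$ ranges over this disc the real quantity $t:=z\overline z$ ranges over the interval $[0,C^2]$. Hence it suffices to produce, for each $\e>0$, an ordinary polynomial $q$ in one real variable with $|q(t)-\sqrt t|\le\e$ for all $t\in[0,C^2]$: then $P(z,\overline z):=q(z\overline z)$ is a polynomial in $z$ and $\overline z$, and $|P(z,\overline z)-|z||=|q(z\overline z)-\sqrt{z\overline z}|\le\e$ throughout the disc, as required.

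The existence of such a $q$ is immediate from the Weierstrass approximation theorem, since $t\mapsto\sqrt t$ is continuous on the compact interval $[0,C^2]$. If one prefers a self-contained argument, one can exhibit the approximating polynomials explicitly. After rescaling $t$ to lie in $[0,1]$ (replace $t$ by $t/C^2$ and multiply the resulting polynomial by $C$ at the end), define $p_0\equiv 0$ and $p_{m+1}(t)=p_m(t)+\tfrac12\bigl(t-p_m(t)^2\bigr)$. Each $p_m$ is a polynomial in $t$, and the identity $\sqrt t-p_{m+1}(t)=\bigl(\sqrt t-p_m(t)\bigr)\bigl(1-\tfrac12(\sqrt t+p_m(t))\bigr)$ drives an easy induction showing $0\le p_m(t)\le\sqrt t$ and $\sqrt t-p_{m+1}(t)\le\sqrt t-p_m(t)$ for all $t\in[0,1]$. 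Thus $\bigl(p_m(t)\bigr)_m$ is monotone increasing and bounded above by $\sqrt t$, so it converges pointwise; its limit $\ell(t)$ satisfies $\ell=\ell+\tfrac12(t-\ell^2)$, hence $\ell(t)=\sqrt t$. By Dini's theorem the convergence is uniform on $[0,1]$, and undoing the rescaling yields the desired $q$.

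There is no genuine obstacle here; the statement is as elementary as it looks. The only points requiring a moment's care are the bookkeeping of the rescaling between $[0,C^2]$ and $[0,1]$, and, on the elementary route, checking that $1-\tfrac12(\sqrt t+p_m(t))\in[0,1]$ on $[0,1]$ (which follows from $0\le p_m\le\sqrt t\le 1$) so that the induction goes through.
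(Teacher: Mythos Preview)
Your argument is correct. The reduction $|z|=\sqrt{z\overline z}$ followed by a one-variable approximation of $t\mapsto\sqrt t$ on $[0,C^2]$ is sound, and the explicit Newton-type iteration $p_{m+1}(t)=p_m(t)+\tfrac12(t-p_m(t)^2)$ together with Dini's theorem gives a clean self-contained construction.

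The paper does not actually prove the lemma: it simply remarks that it is a special case of the Stone--Weierstrass theorem (the algebra of polynomials in $z$ and $\overline z$ separates points and is closed under conjugation, hence is uniformly dense in the continuous functions on the closed disc). Your route via the real Weierstrass theorem is more elementary and has the pleasant feature of producing an approximant that depends only on $z\overline z$, which makes the ``polynomial in $z$ and $\overline z$'' structure completely transparent. Both approaches are adequate for the paper's purposes; yours gives a bit more than is needed but at no real cost.
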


From this we obtain a further reduction of what we hope to prove.

\begin{corollary} \label{products}
For every $\e>0$ and every real number $C$ there exist $\d>0$ and a positive integer $k$ with the following property. Let $\mu$ be a measure on $G$ with $\E\mu=1$ and let $\|.\|$ be the norm defined earlier on $\C^G$. Suppose that $|\sp{\mu-1,\xi}|\leq\d$ for every $\xi$ that can be written as a product of at most $k$ basic anti-uniform functions. Suppose also that $\|u\|_\infty\leq 2$ for every basic anti-uniform function. Then $|\sp{\mu-1,|\phi|}|\leq\e$ for every function $\phi$ such that $\|\phi\|^*\leq C$.
\end{corollary}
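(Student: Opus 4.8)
The plan is the routine ``approximate, substitute, expand'' strategy. Given $\phi$ with $\|\phi\|^*\le C$, the remark following Lemma~\ref{dualnorm} lets us write $\phi=\sum_{i=1}^n a_iu_i$ with each $u_i$ a basic anti-uniform function and $\sum_i|a_i|\le C$. The hypothesis that $\|u\|_\infty\le 2$ for every basic anti-uniform function then gives $\|\phi\|_\infty\le 2C$, so $\phi$ takes all of its values in the fixed disc $\{z:|z|\le 2C\}$; this is the only place the boundedness hypothesis is used, and it is essential, since without it a small value of $\|\phi\|^*$ would say nothing about $\|\phi\|_\infty$. Applying Lemma~\ref{weierstrass} with this $C$ and accuracy $\e/4$ produces a polynomial $P(z,\overline z)=\sum_{p+q\le d}c_{p,q}z^p\overline z^q$, of some degree $d=d(\e,C)$ and with coefficients depending only on $\e$ and $C$, such that $\bigl|\,|z|-P(z,\overline z)\,\bigr|\le\e/4$ on that disc. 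Pointwise, $\bigl|\,|\phi(x)|-P(\phi(x),\overline{\phi(x)})\,\bigr|\le\e/4$, and since $\E|\mu-1|\le\E\mu+1=2$ this yields
\[\bigl|\,\langle\mu-1,|\phi|\rangle-\langle\mu-1,P(\phi,\overline\phi)\rangle\,\bigr|\le\e/2.\]

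It then suffices to bound $\langle\mu-1,P(\phi,\overline\phi)\rangle$. Expanding $P$ and substituting $\phi=\sum_i a_iu_i$, each monomial $\phi^p\overline{\phi}^q$ becomes a linear combination of products of $p+q\le d$ factors, each factor being a basic anti-uniform function or the complex conjugate of one, with the moduli of the coefficients summing to $\bigl(\sum_i|a_i|\bigr)^{p+q}\le\max(1,C)^d$. Hence $P(\phi,\overline\phi)$ is a linear combination of such products with the moduli of its coefficients summing to at most $\Lambda:=\max(1,C)^d\sum_{p+q\le d}|c_{p,q}|$, a number depending only on $\e$ and $C$. (The products that arise involve the conjugates $\overline{u_i}$ as well as the $u_i$ themselves; a conjugate of a basic anti-uniform function is of the same shape with the constraints on the $h_i$ unchanged, and the estimate $|\langle\mu-1,\xi\rangle|\le\d$ that the hypothesis abstracts applies equally to it, so the hypothesis should be read as covering these products too.) Now choose $k=d$ and $\d=\e/(2\Lambda+2)$: the hypothesis bounds $|\langle\mu-1,\xi\rangle|$ by $\d$ for each of the at-most-$k$-fold products $\xi$ occurring, so $|\langle\mu-1,P(\phi,\overline\phi)\rangle|\le\Lambda\d\le\e/2$, and combining this with the displayed inequality gives $|\langle\mu-1,|\phi|\rangle|\le\e$, as required.

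There is no real obstacle here beyond the order of the quantifiers: one must fix $\e$ and $C$, then call on Stone--Weierstrass to obtain $P$ (whose degree $d$ and coefficients $c_{p,q}$ may be enormous, and indeed blow up as $\e\to0$), and only then select $k=d$ and $\d$ in terms of the now-frozen polynomial. The role of the hypothesis $\|u\|_\infty\le2$ is precisely to make the application of Stone--Weierstrass on a single fixed disc legitimate; the non-degeneracy built into $\Gamma$ plays no part in this particular corollary, since the basic anti-uniform functions are used here only as the abstract building blocks of the norm.
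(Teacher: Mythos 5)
Your proposal is correct and follows essentially the same route as the paper: bound $\|\phi\|_\infty$ by $2C$ via the hypothesis on basic anti-uniform functions, approximate $|z|$ by a polynomial in $z$ and $\overline z$ on that disc, expand the polynomial applied to $\phi=\sum_i a_iu_i$ into a short linear combination of at-most-$k$-fold products (noting that conjugates of basic anti-uniform functions are again basic anti-uniform functions), and choose $\d$ and $k$ after the polynomial is fixed. The only differences are cosmetic (splitting the error $\e/4+\e/2$ rather than $\e/3+\e/3$, and the explicit constant $\Lambda$ in place of the paper's $Q(C)$).
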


\begin{proof} Let $P$ be a polynomial in $z$ and $\overline{z}$ that approximates $|z|$ to within $\e/3$ on the closed disc of radius $2C$. Then $\|P\circ\phi-|\phi|\|_\infty\leq\e/3$ for every function $\phi:G\to\C$ with $\|\phi\|_\infty\leq 2C$. By assumption, $\|u\|_\infty\leq 2$ for every basic anti-uniform function $u$, so, by Lemma \ref{dualnorm}, $\|\phi\|_\infty\leq 2C$ whenever $\|\phi\|^*\leq C$. 

By Lemma \ref{dualnorm} again, and also the remark following it, if $\|\phi\|^*\leq C$ then we can express $\phi$ as a linear combination $\sum_{i=1}^na_i\psi_i$, where the $\psi_i$ are basic anti-uniform functions. Since the pointwise complex conjugate of a basic anti-uniform function is also a basic anti-uniform function, it follows that $P\circ\phi$ is a linear combination of products of at most $k$ basic anti-uniform functions, where $k$ is the degree of $P$ and the sum of the absolute values of the coefficients in the linear combination is bounded above by a constant $M$ that depends on $P$ and $C$ only. (A bound that works is $Q(C)$, where $Q$ is the polynomial obtained from $P$ by replacing each coefficient by its absolute value.)  

If we set $\d=\e/3M$, then we obtain the upper bound $|\sp{\mu-1,P\circ\phi}|\leq\e/3$ for every $\phi$ with $\|\phi\|^*\leq C$. Since $\|P\circ\phi-|\phi|\|_\infty\leq\e/3$ and $\|\mu\|_1=\|1\|_1=1$, it follows that $|\sp{\mu-1,|\phi|}|\leq\e$, as claimed.
\end{proof}

The main task ahead of us, therefore, is to prove that under suitable circumstances we have the hypothesis that $|\langle\mu-1,\xi\rangle|\leq\d$ for every product $\xi$ of at most $k$ basic anti-uniform functions. For this we shall need some probabilistic lemmas, proved in the next section, followed by an argument similar to one in \cite{CG10} that established a result of this type. However, the argument in this paper is substantially simpler, because the factor of $(\log n)^{1/3}$ in our probability allows us to prove that certain functions are uniformly bounded rather than bounded almost everywhere. (We stress that we do not introduce the logarithmic factor merely to simplify the proof: for this problem it gives the correct bound up to a constant factor.) 

\section{Some probabilistic lemmas}

We begin with the standard Chernoff bound. 

\begin{lemma} \label{singleset}
Let $X$ be a set of size $n$, let $0<\d\leq 1$, let $p\in[0,1]$ and let $U$ be a random subset of $X$ where each element is chosen independently with probability $p$. Then 
\[\P\bigl[\bigl||U|-pn\bigr|>\d pn\bigr]\leq 2\exp(-\d^2 pn/3).\] 
\end{lemma}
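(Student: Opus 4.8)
The plan is to use the exponential moment method---the standard derivation of the Chernoff bound. Write $|U| = \sum_{i=1}^n X_i$, where the $X_i$ are independent Bernoulli random variables with $\P[X_i = 1] = p$, so that $\E|U| = pn$. For the upper tail, fix $t > 0$ and apply Markov's inequality to $e^{t|U|}$. Using independence and the estimate $1 + x \leq e^x$,
\[ \P\bigl[|U| \geq (1+\d)pn\bigr] \leq e^{-t(1+\d)pn}\,\E e^{t|U|} = e^{-t(1+\d)pn}(1-p+pe^t)^n \leq \exp\bigl(pn(e^t-1) - t(1+\d)pn\bigr). \]
Choosing $t = \log(1+\d)$ to optimise the exponent yields the familiar bound $\P[|U| \geq (1+\d)pn] \leq \bigl(e^\d(1+\d)^{-(1+\d)}\bigr)^{pn}$.

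For the lower tail I would run the same argument with $e^{-t|U|}$ for $t > 0$, obtaining $\P[|U| \leq (1-\d)pn] \leq \exp\bigl(pn(e^{-t}-1) + t(1-\d)pn\bigr)$, and optimising at $t = -\log(1-\d)$ gives $\P[|U| \leq (1-\d)pn] \leq \bigl(e^{-\d}(1-\d)^{-(1-\d)}\bigr)^{pn}$. A union bound over the two tails then produces the factor of $2$ in the statement, so it suffices to check that each of the two base quantities is at most $\exp(-\d^2 pn/3)$; equivalently, that $(1+\d)\log(1+\d) - \d \geq \d^2/3$ and $(1-\d)\log(1-\d) + \d \geq \d^2/3$ for $0 < \d \leq 1$.

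These two elementary inequalities are the only place a small computation is needed, and they follow by comparing Taylor series. For instance $(1+\d)\log(1+\d) - \d = \sum_{j \geq 2}\frac{(-1)^j \d^j}{j(j-1)}$, whose tail beyond the first two terms is a nonincreasing alternating series for $\d \leq 1$ and hence nonnegative, so this is at least $\d^2/2 - \d^3/6 \geq \d^2/3$; similarly $(1-\d)\log(1-\d) + \d = \sum_{j \geq 2}\frac{\d^j}{j(j-1)} \geq \d^2/2 \geq \d^2/3$. There is no genuine obstacle here---the result is entirely standard---and the only mild care required is to keep enough terms of the alternating series in the upper-tail estimate to beat $\d^2/3$ rather than stopping too early.
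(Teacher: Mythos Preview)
Your proof is correct and is the standard exponential-moment derivation of the Chernoff bound; the paper does not give a proof at all, simply stating the lemma as ``the standard Chernoff bound,'' so your argument is exactly what the authors have in mind. The only minor point worth noting is that at $\delta=1$ the lower-tail optimisation formally gives $t=\infty$, but the limiting bound $e^{-pn}\leq e^{-pn/3}$ is immediate, so there is no real issue.
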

Amongst other things, this lemma allows us to ignore the difference between the characteristic measure $\mu$ of $U$, defined by $\mu(x) = |X|/|U|$ if $x \in U$ and $0$ otherwise, and its associated measure, defined by $\mu'(x) = p^{-1}$ if $x \in U$ and $0$ otherwise. For example, in the next section, we will actually prove that with high probability $|\langle\mu'-1,\xi\rangle|\leq\d$ for every product $\xi$ of at most $k$ basic anti-uniform functions built relative to $\mu'$, but the result for the characteristic measure is an immediate corollary.

\begin{lemma} \label{doubleconvolution}
Let $U_1$ and $U_2$ be two random subsets of an Abelian group $G$ of size $n$, each with elements chosen independently with probability $p$. Then the probability that there exists an element of $G$ that can be written in $s$ ways as $u_1+u_2$ with $u_1\in U_1$ and $u_2\in U_2$ is at most $n^{s+1}p^{2s}$.
\end{lemma}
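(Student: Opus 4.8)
The plan is to prove this by a direct first-moment (union bound) argument: the bad event is a union, over elements $x\in G$ and over $s$-element configurations, of events each of probability exactly $p^{2s}$.

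First I would fix an element $x\in G$ and unpack what it means for $x$ to be written in $s$ ways as $u_1+u_2$ with $u_1\in U_1$ and $u_2\in U_2$. Since $u_1$ determines $u_2=x-u_1$, two such representations coincide precisely when their first coordinates do, so having $s$ distinct representations is the same as having a set $S=\{a_1,\dots,a_s\}$ of $s$ distinct elements of $G$ with $a_i\in U_1$ and $x-a_i\in U_2$ for every $i$. Next, for this fixed $x$ and fixed $S$, I would compute the probability that $S\subseteq U_1$ and $x-S\subseteq U_2$: the $s$ events $a_i\in U_1$ concern distinct elements of $G$ and so are mutually independent, each of probability $p$; likewise the $s$ events $x-a_i\in U_2$; and the two blocks of events are independent of each other because $U_1$ and $U_2$ are chosen independently. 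Hence this probability is exactly $p^{2s}$. (It is irrelevant whether some $a_i$ coincides with some $x-a_j$, since those conditions refer to membership in two different, independent sets.)

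Finally I would take a union bound over the at most $n$ choices of $x$ and the $\binom{n}{s}\le n^s$ choices of the $s$-element set $S$. If some element $x$ admits $s$ representations as $u_1+u_2$, then the associated event ``$S\subseteq U_1$ and $x-S\subseteq U_2$'' holds for at least one such pair $(x,S)$, so the probability that this happens for some $x\in G$ is at most $n\cdot n^s\cdot p^{2s}=n^{s+1}p^{2s}$, as claimed. There is no genuine obstacle here; the only point deserving a moment's care is the independence of the $2s$ membership events, which is exactly why it matters that the $a_i$ (and hence the $x-a_i$) are distinct and that $U_1$ and $U_2$ are independent.
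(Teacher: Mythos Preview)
Your proof is correct and is essentially identical to the paper's: both fix $x$, use that the events ``$a\in U_1$ and $x-a\in U_2$'' for distinct $a$ are independent with probability $p^2$, and then apply a first-moment/union bound over the $\le n^s$ choices of $s$ elements and the $n$ choices of $x$. The only cosmetic difference is that the paper counts ordered $s$-tuples rather than $s$-subsets, which gives the same bound $n^s p^{2s}$.
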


\begin{proof}  
Fix $x\in G$ and for each $u$ let $E(u)$ be the event that $u\in U_1$ and $x-u\in U_2$. Then the events $E(u)$ are independent and each holds with probability $p^2$, so the expected number of $s$-tuples $(u_1, \dots,u_s)$ with distinct elements such that $E(u_1), \dots, E(u_s)$ all hold is at most $n^s p^{2s}$. Therefore, the probability that $x$ can be written in $s$ ways as the sum of an element of $U_1$ and an element of $U_2$ is at most $n^s p^{2s}$, so the probability that \emph{some} $x$ can be written in $s$ ways is at most $n^{s+1} p^{2s}$, as claimed.
\end{proof}

For the next lemma, we recall that the convolution of two functions $f, g : G \rightarrow \C$ is given by $f*g(x) = \E_{y + z =x} f(y) g(z)$. 

\begin{lemma} \label{tripleconvolution}
For every $0<\e\leq 1$, there exists a constant $C$ with the following property. Let $G$ be an Abelian group of order $n$ and let $U_1,U_2$ and $U_3$ be independent random subsets of $G$ with each element chosen (for each $U_i$) independently with probability $p=Cn^{-2/3}(\log n)^{1/3}$. For each $i$, let $\mu_i=p^{-1}\chi_{U_i}$, where $\chi_{U_i}$ is the characteristic function of $U_i$, and let $\mu_i^-(x)=\mu_i(-x)$. Then, with probability $1-o(n^{-3})$, every value of the function $\mu_1*\mu_2*\mu_3^-$ lies between $1-\e$ and $1+\e$. 
\end{lemma}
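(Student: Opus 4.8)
The plan is to unwind the triple convolution into a sum over pairs, condition on $U_1$ and $U_2$ so that the remaining randomness sits entirely in $U_3$, and then apply a Chernoff bound to the resulting sum of independent \emph{bounded} summands, the boundedness being supplied by Lemma~\ref{doubleconvolution}. Fix $x\in G$ and set $S_x=\#\{(y,z)\in U_1\times U_2 : y+z-x\in U_3\}$. Unwinding the definitions (with the normalisation $f*g(v)=\E_{a+b=v}f(a)g(b)=n^{-1}\sum_a f(a)g(v-a)$), one checks that $\mu_1*\mu_2*\mu_3^-(x)=p^{-3}n^{-2}S_x$; since $p^3=C^3n^{-2}\log n$ we have $p^3n^2=C^3\log n$, so it suffices to prove that, with probability $1-o(n^{-3})$, $S_x$ lies in $[(1-\e)C^3\log n,\ (1+\e)C^3\log n]$ for every $x$ simultaneously. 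Now rewrite $S_x=\sum_{w\in U_3}r(x+w)$, where $r(v)=\#\{(y,z)\in U_1\times U_2: y+z=v\}$ is the representation function of $U_1\times U_2$; note that $\sum_v r(v)=|U_1||U_2|$. For fixed $U_1,U_2$ this exhibits $S_x=\sum_{w\in G}\chi_{U_3}(w)\,r(x+w)$ as a sum of independent random variables, one per $w$, coming from $U_3$.

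To control these, first condition on $U_1$ and $U_2$. By Lemma~\ref{singleset}, applied with a small fixed constant $\d_0>0$, the probability that $|U_i|$ fails to lie in $[(1-\d_0)pn,(1+\d_0)pn]$ for some $i\in\{1,2\}$ is at most $4\exp(-\d_0^2 pn/3)$, which is $o(n^{-3})$ (indeed super-polynomially small) because $pn=Cn^{1/3}(\log n)^{1/3}$ grows much faster than $\log n$. By Lemma~\ref{doubleconvolution}, applied with a sufficiently large absolute constant $s$ — for instance $s=13$, for which $n^{s+1}p^{2s}=C^{2s}n^{1-s/3}(\log n)^{2s/3}=o(n^{-3})$ — the probability that $r(v)\geq s$ for some $v\in G$ is $o(n^{-3})$. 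Condition on $U_1,U_2$ lying in the intersection of these two events; then $m_x:=\E[S_x\mid U_1,U_2]=p\sum_w r(x+w)=p|U_1||U_2|$ lies in $[(1-\d_0)^2,\ (1+\d_0)^2]\,C^3\log n$, and $S_x$ is a sum of independent random variables each taking values in $[0,s]$.

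A standard Chernoff bound for sums of independent random variables taking values in $[0,s]$ now gives, for any $\eta\in(0,1]$,
\[
\P\!\left[\,|S_x-m_x|>\eta\,m_x\ \middle|\ U_1,U_2\,\right]\ \leq\ 2\exp\!\left(-\tfrac{\eta^2}{3s}\,m_x\right)\ \leq\ 2\,n^{-\eta^2(1-\d_0)^2C^3/(3s)}.
\]
Choose $\d_0$ and $\eta$ small enough (in terms of $\e$) that $(1-\d_0)^2(1-\eta)\geq 1-\e$ and $(1+\d_0)^2(1+\eta)\leq 1+\e$, and then take $C$ large enough (in terms of $\e$; note $s$ is absolute) that $\eta^2(1-\d_0)^2C^3/(3s)\geq 5$. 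The displayed probability is then at most $2n^{-5}$, and this bound holds for every admissible $U_1,U_2$, hence unconditionally up to the $o(n^{-3})$ failure probability of the conditioning. A union bound over the $n$ choices of $x$ now shows that, with probability $1-o(n^{-3})$, every value of $\mu_1*\mu_2*\mu_3^-$ lies in $[1-\e,1+\e]$.

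The one genuinely delicate point is the dependency structure. Written as a sum over the pairs $(y,z)$, $S_x$ is \emph{not} a sum of independent terms, because the pairs $(y,z)$ and $(y,z')$ share the event $y\in U_1$, so no concentration inequality applies to it directly; conditioning on $U_1$ and $U_2$ repairs this, but only because Lemma~\ref{doubleconvolution} then guarantees that the coefficients $r(x+w)$ are uniformly bounded by a constant, without which the argument would not close. The second thing to watch is quantitative: the per-$x$ tail bound must survive a union bound over all $n$ elements $x$, which forces the conditional mean $p^3n^2=C^3\log n$ to tend to infinity. This is exactly the role of the $(\log n)^{1/3}$ factor in $p$ — it makes this mean logarithmic rather than bounded, so that choosing $C$ large drives the per-$x$ failure probability below any prescribed power of $n$.
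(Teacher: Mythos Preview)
Your proof is correct and follows essentially the same line as the paper's: fix $x$, rewrite the count as $\sum_{w\in G}\chi_{U_3}(w)\,r(x+w)$, condition on $U_1,U_2$ so that Lemma~\ref{singleset} pins down $|U_1||U_2|$ and Lemma~\ref{doubleconvolution} bounds $\max_v r(v)$ by an absolute constant, then apply a concentration inequality to the remaining sum of independent bounded variables in $U_3$ and union-bound over $x$. The paper uses Bernstein's inequality where you invoke a Chernoff bound for $[0,s]$-valued summands, but this is a cosmetic difference only.
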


\begin{proof}
Let $x\in G$. Then $\mu_1*\mu_2*\mu_3^-(x)$ is equal to $p^{-3}$ times the number of ways of writing $x$ as $u_1+u_2-u_3$ with $u_i\in U_i$ for each $i$. By Lemma \ref{doubleconvolution}, there is a probability of $1-o(n^{-3})$ that the maximum value of $\chi_{U_1}*\chi_{U_2}$ is at most $12$. For convenience, write $\eta$ for $\e/4$. Then, by Lemma \ref{singleset}, there is a probability of $1-o(n^{-3})$ that both $U_1$ and $U_2$ have sizes between $(1-\eta)pn$ and $(1+\eta)pn$. Let us fix $U_1$ and $U_2$ with these properties and consider our random choice of $U_3$. 

The number of ways of writing $x$ as $u_1+u_2-u_3$ is equal to $\sum_{u\in U_3}\chi_{U_1}*\chi_{U_2}(x+u)$. This we can represent as a sum of independent random variables $X_u$, where $X_u=\chi_{U_1}*\chi_{U_2}(x+u)$ with probability $p$ and 0 with probability $1-p$. The expectation of $\sum_uX_u$ is $n^{-1}|U_1||U_2|\E|U_3|$, which lies between $(1-\eta)^2p^3n^2$ and $(1+\eta)^2p^3n^2$. From this and the fact that each $X_u$ is bounded above by 12, $\sum_u\var(X_u)\leq 12(1+\eta)^2p^3n^2\leq 20p^3n^2$. Therefore, by Bernstein's inequality, 
\[\P\bigl[\bigl|\sum_uX_u-\E\sum_uX_u\bigr|>\eta p^3n^2\bigr]\leq 2\exp(-\eta^2p^6n^4/2(20p^3n^2+ 4\eta p^3n^2))\leq 2\exp(-\eta^2p^3n^2/48).\]
But $p^3n^2=C^3\log n$, so if we set $C=8/\eta^{2/3}$ then this upper bound is at most $\exp(-8\log n)=o(n^{-4})$. 

We are more or less done, but we need to renormalize. Note that 
\[\E_{u_1+u_2-u_3=x}\mu_1(u_1)\mu_2(u_2)\mu_3(u_3)=n^{-2}p^{-3}\sum_uX_u,\]
so we have shown that with probability $1-o(n^{-3})$, every value of $\mu_1*\mu_2*\mu_3^-$ lies between $(1-\eta)^2-\eta$ and $(1+\eta)^2+\eta$. Since $\eta\leq 1$, these values lie between $1-4\eta$ and $1+4\eta$, which proves the result.
\end{proof}

\section{Basic anti-uniform functions are approximately contained in the convex hull of a small set}

We now return to our task of proving that with high probability $\langle\mu-1,\xi\rangle$ is small whenever $\xi$ is a product of not too many basic anti-uniform functions. A difficulty with doing this is that the definition of a basic anti-uniform function depends on the measure $\mu$, but this turns out to be less of a problem than it looks: it will be enough to prove the result when the random measure $\mu$ and all the measures used to define the different basic anti-uniform functions are independent. We shall do this first and then give a standard argument that shows why it is enough.

As for the result when all the measures are independent, the technique here, as in \cite{CG10}, is to prove that there is a set $\cf$ of bounded functions that is not too large such that every product $\xi$ of basic anti-uniform functions can be approximated by a convex combination $\omega$ of functions in $\cf$. In \cite{CG10} we had to give a somewhat complicated definition of ``can be approximated by", but here it is simply a uniform approximation. If $\cf$ is small enough and if for every bounded function $g$ the inner product $\langle\mu-1,g\rangle$ is small with high probability, then a union bound will tell us that with high probability $\langle\mu-1,\phi\rangle$ is small for every $\phi\in\cf$ and hence for every $\phi$ in the convex hull of $\cf$.

The proof is slightly complicated by the fact that our basic anti-uniform functions are convolutions of two kinds of functions: functions that are bounded above by 1 and functions that are bounded above by the characteristic measure of a sparse random set. Let us consider first the functions of the latter kind. For these, our argument is based on a simple observation. Suppose that $f$ is a function defined on a finite Abelian group and $\theta\in [0,1]$. Then we define the $\theta$-\emph{random restriction} $R_\theta f$ of $f$ to be a random function $g$ where for each $x$ we have $g(x)=f(x)/
\theta$ with probability $\theta$ and $g(x)=0$ with probability $1-\theta$, where all these events are independent. It is important that $R_\theta f$ is not a fixed function but a random one. We shall adopt as a notational convention that if we have functions $R_\theta f_1,\dots,R_\theta f_k$ then all the random values $R_\theta f_i(x)$ are independent. That is, the random restrictions of the different $f_i$ are made independently. We adopt this convention even if some of the $f_i$ are equal. For example, $R_\theta f*R_\theta f$ denotes the convolution of a random restriction of $f$ with another, independently chosen, random restriction of $f$.

We shall want to take averages over random functions. To avoid confusion with averages of the form $\E_xf(x)$ we shall write $\E_\Sigma$ for the average over the functions themselves.

Our simple observation is that many quantities defined in terms of random restrictions average to the corresponding quantities for the original functions. The underlying reason for this is that for each $x$ we have $\E_\Sigma R_\theta f(x)=f(x)$. That is, $\E_\Sigma R_\theta f=f$. From this it follows that if $f,g$ and $h$ are functions defined on $G$, then $\E_\Sigma R_\theta f*R_\theta g*R_\theta h=f*g*h$. (Here we are using independence, so that expectations of products are products of expectations.) We even have that
\[\E_\Sigma\prod_{i=1}^k R_\theta f_i*R_\theta g_i*R_\theta h_i =\prod_{i=1}^kf_i*g_i*h_i\]
for any functions $f_1,\dots,f_k,g_1,\dots,g_k,h_1,\dots,h_k$. 

This implies that a product of basic anti-uniform functions is a convex combination of products of functions built like basic anti-uniform functions but out of random restrictions instead. Since the random restrictions have smaller support, there are fewer of them -- or rather, they can be approximated by a smaller net -- which gives us the small set we are looking for.

Unfortunately, when we have $0\leq f\leq 1$ rather than $0\leq f\leq\mu$, the number of random restrictions is too large for our argument to work. However, in this case (which we would expect in advance to be easier) there is something else we can do. We first choose three independent random sets $A,B,C\subset G$ of size $qn$. 
Given a function $f$ with $0\leq |f|\leq 1$, we then define $R_Af$ as follows. We choose a random $u\in G$ and we then set $R_Af(x)=q^{-1}f(x)$ if $x\in A+u$ and 0 otherwise. We make similar definitions for $R_Bf$ and $R_Cf$. We have that $\E_\Sigma R_Af=f$, where here $A$ is fixed and $\E_\Sigma$ is the average over the different $R_A f$ corresponding to different choices of $u$, and similarly for $B$ and $C$.
With the same convention that different random restrictions are chosen independently, we also have that 
\[\E_\Sigma\prod_{i=1}^k R_A f_i*R_B g_i*R_C h_i =\prod_{i=1}^kf_i*g_i*h_i\]
for any functions $f_1,\dots,f_k,g_1,\dots,g_k,h_1,\dots,h_k$. More generally, let us take $Rf$ to be $R_\theta f$ if $0\leq f\leq\mu$ and $R_Af$, $R_Bf$ or $R_Cf$ if $0\leq f\leq 1$, depending on whether $f$ appears first, second or third in the convolution. Then we have the identity
\[\E_\Sigma\prod_{i=1}^k R f_i*R g_i*R h_i =\prod_{i=1}^kf_i*g_i*h_i\]
regardless of which kinds of functions the $f_i$, $g_i$ and $h_i$ are.

The reason this observation does not instantly prove what we want is that in order to prove that $\langle\mu-1,\phi\rangle$ is small for every $\phi\in\cf$ we shall need the functions in $\cf$ to be bounded. This is almost always true when we take convolutions of random restrictions, but not quite always. So we need to prove that the functions that are not bounded form a sufficiently small proportion of the functions in $\cf$ that we can remove them from the convex combination above and still have an approximate equality, where the approximation is in the uniform norm. This we shall do with the help of the following definition and lemma.

\begin{definition} Let $G$ be an Abelian group of order $n$, let $q\in[0,1]$ and let $W_1,W_2,W_3$ be subsets of $G$. Then the triple $(W_1,W_2,W_3)$ is $(\e,q)$-\emph{good} if it has the following properties:
\begin{enumerate} 
\item Each $W_i$ has size $(1+o(1))qn$.
\item Let $V_1,V_2,V_3$ be independent random subsets of $G$, where each element of each set is chosen with probability $q$. For each $i$ and each $u_i \in G$, let $\omega_{i,u_i}=q^{-1}\chi_{W_i + u_i}$ and $\nu_i=q^{-1}\chi_{V_i}$. Then, with probability $1-o(1)$, all convolutions $\b_1*\b_2*\b_3^-$ where each $\b_i$ is either $\omega_{i,u_i}$ or $\nu_i$ have the property that every value they take lies between $1-\e$ and $1+\e$. 
\end{enumerate}
We shall say that the triple $(V_1,V_2,V_3)$ \emph{complements} the triple $(W_1,W_2,W_3)$ if $V_1, V_2, V_3$ satisfy the conclusion of condition 2.
\end{definition}

\begin{lemma} \label{threesets}
For every $0<\e\leq 1$ there exists a constant $D$ with the following property. Let $G$ be an Abelian group of order $n$ and let $W_1,W_2$ and $W_3$ be independent random subsets of $G$ with elements chosen with probability $q$, where $q=Dn^{-2/3}(\log n)^{1/3}$. Then the triple $(W_1,W_2,W_3)$ is $(\e,q)$-good with probability at least $1-o(1)$.
\end{lemma}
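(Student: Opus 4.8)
The plan is to isolate a high‑probability event depending on $W_1,W_2,W_3$ alone on which both conditions of $(\epsilon,q)$-goodness can be verified, the second one using a fresh randomisation of $V_1,V_2,V_3$. Write $\mu_{W_i}=q^{-1}\chi_{W_i}$. I would let $G_W$ be the intersection of the following three events: (i) $|W_i|=(1+o(1))qn$ for $i=1,2,3$; (ii) each of the functions $t\mapsto|W_1\cap(t-W_2)|$, $t\mapsto|W_1\cap(t+W_3)|$, $t\mapsto|W_2\cap(t+W_3)|$ is bounded above by $12$; and (iii) every value of $\mu_{W_1}*\mu_{W_2}*\mu_{W_3}^-$ lies in $[1-\epsilon,1+\epsilon]$. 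Here (i) follows from Lemma~\ref{singleset} because $qn\to\infty$; (ii) follows from Lemma~\ref{doubleconvolution} with $s=13$, applied to the pairs $(W_1,W_2)$, $(W_1,-W_3)$, $(W_2,-W_3)$ — note $-W_3$ is again an independent random $q$-subset — since $n^{14}q^{26}=o(1)$; and (iii) is Lemma~\ref{tripleconvolution} with $p=q$, valid provided $D$ is at least the constant that lemma supplies for $\epsilon$. A union bound gives $\P[G_W]=1-o(1)$, and condition~1 of $(\epsilon,q)$-goodness is part of $G_W$.

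Next I would remove the shift parameters. Since $\omega_{i,u_i}(x)=q^{-1}\chi_{W_i}(x-u_i)$, the function $\omega_{i,u_i}$ is just the translate of $\mu_{W_i}$ by $u_i$, a convolution of translates is a translate of the convolution, and the set of values a function takes is translation-invariant. Hence, for any fixed choice of which of $\beta_1,\beta_2,\beta_3$ are of $\omega$-type and which are of $\nu$-type, the set of values of $\beta_1*\beta_2*\beta_3^-$ is independent of the shifts $u_i$ and equals the set of values of $\gamma_1*\gamma_2*\gamma_3^-$, where $\gamma_i=\mu_{W_i}$ or $\nu_i$ according to the type chosen. So it suffices to prove that, conditional on $G_W$, with probability $1-o(1)$ over $V_1,V_2,V_3$ all of these eight convolutions take only values in $[1-\epsilon,1+\epsilon]$.

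For the all-$\nu$ choice this is precisely Lemma~\ref{tripleconvolution} applied to $V_1,V_2,V_3$, and for the all-$\mu$ choice it is item (iii) of $G_W$. For each of the remaining six mixed choices I would write $\gamma_1*\gamma_2*\gamma_3^-(x)=n^{-2}q^{-3}T(x)$, where $T(x)$ is the number of triples $(y,z,w)$ with $y+z-w=x$ and $y,z,w$ running respectively through the three sets named by that choice — some among $W_1,W_2,W_3$, the others among $V_1,V_2,V_3$ — and then replay the argument of Lemma~\ref{tripleconvolution}. One picks a set equal to some $V_i$ to be exposed last, exposes the other two first, and lets $h$ be their pairwise representation count; then $T(x)=\sum_v\chi_{V_i}(v)h(x\pm v)$ becomes a sum of independent variables in $[0,12]$, and Bernstein's inequality followed by a union bound over $x\in G$ gives the claim provided $h\le 12$ pointwise and $\sum_t h(t)=(1+o(1))q^2n^2$. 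The size statement comes from (i) together with Lemma~\ref{singleset} (applied to any $V_i$ among the exposed two). For the bound $h\le 12$: if both exposed sets are among the $W_i$, then $h$ is one of the three functions in item (ii) of $G_W$ — the three sign patterns recorded there are exactly the ones that occur — and if one of the two is a $V_i$, then, for the fixed partner $W_j$, $h(t)$ is $\mathrm{Bin}(|W_j|,q)$-distributed with $|W_j|\le 2qn$, so $\P[\exists t:\ h(t)\ge 13]\le n\binom{2qn}{13}q^{13}=o(n^{-3})$. Since $q^3n^2=D^3\log n$, taking $D$ large in terms of $\epsilon$ makes each Bernstein bound $o(n^{-4})$, and summing the $O(1)$ many error events finishes the proof.

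The one point that goes beyond Lemma~\ref{tripleconvolution} is the bookkeeping forced by the coexistence of the fixed sets $W_i$ and the fresh sets $V_i$: the translation observation is what disposes of the quantifier over all shifts $u_i$, but in each mixed case one must then check that whichever representation-count function appears in the ``expose-first'' step is controlled — by $G_W$ when it involves two of the $W_i$ (one cannot invoke Lemma~\ref{doubleconvolution} for this after exposing them, which is why (ii) must be built into $G_W$), and by a binomial tail bound, using that $|W_i|$ is small, when it involves one $W_i$ and one $V_i$. Keeping track of which of the three sets is exposed last in each of the six mixed patterns, and matching it against the sign patterns in (ii), is the main care the proof requires; the rest is the probabilistic routine already carried out above.
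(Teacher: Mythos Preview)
Your argument is correct, but it is considerably more laborious than necessary. The paper's proof is a three-line union bound: for any \emph{fixed} choice of $\beta_1,\beta_2,\beta_3$ among the $(n+1)^3$ possibilities (each $\beta_i$ being either $\nu_i$ or $\omega_{i,u_i}$ for one of the $n$ shifts $u_i$), the three underlying sets are independent random $q$-subsets of $G$, because the distribution of $W_i$ is translation-invariant and the $W_i,V_i$ are jointly independent. Hence Lemma~\ref{tripleconvolution} applies directly to each such choice and gives failure probability $o(n^{-3})$; a union bound over all $(n+1)^3$ choices then yields joint failure probability $o(1)$, and an implicit Fubini/Markov step converts this into the conditional statement in condition~2.

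What you do differently is to first isolate an event $G_W$ depending only on the $W_i$, then use a deterministic translation-invariance observation to collapse the quantifier over shifts to eight type-patterns, and finally re-run the Bernstein computation of Lemma~\ref{tripleconvolution} by hand for each of the six mixed patterns, taking care to control the relevant pairwise representation counts via your item~(ii) or a fresh binomial tail bound. This buys you a more ``constructive'' description of which property of the $W_i$ is actually being used, but at the cost of substantial bookkeeping that the paper avoids entirely by recognising that Lemma~\ref{tripleconvolution} already delivers an $o(n^{-3})$ bound strong enough to absorb the $(n+1)^3$ choices in one stroke. Your translation observation is nice but, once one sees the distributional invariance, unnecessary.
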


\begin{proof}
Let $D$ be the constant given by Lemma \ref{tripleconvolution} (where it is called $C$), let $(V_1,V_2,V_3)$ be chosen according to the same distribution as $(W_1,W_2,W_3)$ and let $\b_1,\b_2$ and $\b_3$ be one of the possible choices for the $\b_i$ as in the definition above. Then, by Lemma \ref{tripleconvolution}, the probability that the values of $\b_1*\b_2*\b_3^-$ all lie between $1-\e$ and $1+\e$ is $1-o(n^{-3})$. Therefore, the probability is $1-o(1)$ that this is true for all $(n+1)^3$ possible choices of $\b_1,\b_2,\b_3$.
\end{proof}

Now let us define a set of bounded functions $\Psi_1$ and prove that every product of at most $k$ basic anti-uniform functions can be approximated by a convex combination of functions in $\Psi_1$. We will then find a fairly small subset $\Psi\subset\Psi_1$ with the same property.

Let $q$ be as in Lemma \ref{threesets} and let $(W_1,W_2,W_3)$ be a good triple. Let $p=Cn^{-2/3}(\log n)^{1/3}=Cq/D$ and let $U_1,U_2,U_3$ be independent sets where each element is chosen independently with probability $p$. Let $\mu_i=p^{-1}\chi_{U_i}$ for each $i$. 


Given any function $f:G\to\C$, write $\supp(f)$ for the support of $f$ and define $\sigma(f)$ to be the function that takes the value $q^{-1}$ on $\supp(f)$ and $0$ elsewhere. This is a kind of ``normalized support" of $f$. Given a function $h$, let $h^*$ be the function given by $h^*(x) = \ol{h(-x)}$. In particular, $\supp(h^*) = - \supp(h)$. We shall now let $\Psi_1$ consist of all functions of the form $\prod_{i=1}^kf_i*g_i*h_i^*$ with the following properties:
\begin{itemize}
\item For each $i$, $\supp(f_i)$ is contained in either $U_1$ or a translate of $W_1$, $\supp(g_i)$ is contained in either $U_2$ or a translate of $W_2$, and $\supp(h_i)$ is contained in either $U_3$ or a translate of $W_3$.
\item For each $i$, $\|f_i\|_\infty$, $\|g_i\|_\infty$ and $\|h_i\|_\infty$ are all at most $q^{-1}$.
\item For each $i$, all of $\supp(f_i)$, $\supp(g_i)$ and $\supp(h_i)$ have size at most $2qn$.
\item $\Bigl\|\prod_{i=1}^k\sigma(f_i)*\sigma(g_i)*\sigma(h_i^*)\Bigr\|_\infty\leq 3/2$.
\end{itemize}

We also need to change slightly the notion of a basic anti-uniform function. We now define it to be a convolution $u_1*u_2*u_3^*$ such that for each $i$ we either have $0\leq|u_i(x)|\leq\mu_i(x)$ for every $x$ or we have $0\leq|u_i(x)|\leq 1$ for every~$x$. (The earlier definition had $U_1=U_2=U_3$.)

\begin{corollary} \label{uniformbound}
Suppose that $(W_1, W_2, W_3)$ is $(\eps, q)$-good, with $q = Dn^{-2/3}(\log n)^{1/3}$ and $\epsilon = 1/4k$, and let $U_1,U_2,U_3$ and $\Psi_1$ be as defined above. Then, with probability $1-o(1)$, every product of at most $k$ basic anti-uniform functions can be approximated up to $\eta$ in the uniform norm by a convex combination of functions in $\Psi_1$.
\end{corollary}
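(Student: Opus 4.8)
The plan is to replace each product of basic anti-uniform functions by an average, over independent random restrictions of its constituents, of products of the same shape built from the restricted functions, and then to observe that almost every function in that average already lies in $\Psi_1$. So fix a product $\Phi=\prod_{i=1}^{m}\bigl(u_{i,1}*u_{i,2}*u_{i,3}^{*}\bigr)$ of $m\le k$ basic anti-uniform functions; padding with copies of $1=1*1*1^{*}$ we may take $m=k$. To each constituent with $0\le|u_{i,\ell}|\le\mu_\ell$ apply the $\theta$-random restriction $R_\theta$ with $\theta=q/p$ (which is $\le1$ by our choice of constants), and to each constituent with $0\le|u_{i,\ell}|\le1$, occurring in position $\ell$, apply $R_{W_\ell}$. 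By the identities established just before the statement, $\Phi=(1+o(1))\,\E_\Sigma\Phi_\Sigma$, where $\Phi_\Sigma=\prod_{i=1}^{k}\bigl(Ru_{i,1}*Ru_{i,2}*(Ru_{i,3})^{*}\bigr)$ and the uniform scalar $1+o(1)$ comes from $|W_\ell|=(1+o(1))qn$; thus $\Phi$ is, up to a uniformly $o(1)$ additive error, a convex combination of the random functions $\Phi_\Sigma$.

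I would then show that $\Phi_\Sigma\in\Psi_1$ for all $\Sigma$ outside a set $\mathcal{B}$ of super-polynomially small $\E_\Sigma$-measure. The first two conditions defining $\Psi_1$ hold for \emph{every} $\Sigma$: the constituents of $\Phi_\Sigma$ are supported in $U_\ell$ or a translate of $W_\ell$ by construction, and have $L^\infty$ norm at most $q^{-1}$ because $R_\theta$ scales values by $\theta^{-1}=p/q$ and $R_{W_\ell}$ by $q^{-1}$. For the third condition, $|\supp(Ru_{i,\ell})|$ is dominated by a binomial of mean $\le(1+o(1))qn$, so it exceeds $2qn$ only with probability exponentially small in $qn=Dn^{1/3}(\log n)^{1/3}$. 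The fourth, $\bigl\|\prod_{i=1}^{k}\sigma(f_i)*\sigma(g_i)*\sigma(h_i)^{-}\bigr\|_\infty\le3/2$, is the real point, and I would bound each of the $k$ factors $\sigma(Ru_{i,1})*\sigma(Ru_{i,2})*\sigma(Ru_{i,3})^{-}$ on its own. Each $\sigma(Ru_{i,\ell})$ is $q^{-1}$ times the indicator either of a set contained in some translate of $W_\ell$ (so $\le\omega_{\ell,u}$ for that translate) or of a $\theta$-random subset of $\supp(u_{i,\ell})\subseteq U_\ell$, a set of density at most $q$ in $G$. When all three constituents of factor $i$ are of the first kind, the $(\e,q)$-goodness of $(W_1,W_2,W_3)$ bounds that factor by $1+\e$ outright. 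When at least one is of the second kind, the factor is a convolution of three normalised random sets of density at most $q$, and a concentration estimate — either directly via a Bernstein bound of exactly the form used in the proof of Lemma~\ref{tripleconvolution} (controlling the relevant overlap counts by the constant bound of Lemma~\ref{doubleconvolution} and using $q^{3}n^{2}=D^{3}\log n$), or by dominating the restricted supports by random density-$q$ sets and invoking goodness — shows this factor exceeds $1+\e$ somewhere with probability only $o(n^{-M})$, where $M$ can be made arbitrarily large by enlarging $D$. The restrictions of different factors being independent, a union bound gives $\P_\Sigma(\mathcal{B})=o(n^{-M'})$; and for $\Sigma\notin\mathcal{B}$ all $k$ factors are $\le1+\e$, so their product is $\le(1+\e)^{k}\le e^{1/4}<3/2$ because $\e=1/4k$.

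To conclude, I would delete the bad restrictions. Writing $\E_\Sigma\Phi_\Sigma=\E_\Sigma[\Phi_\Sigma\mathbf{1}_{\Sigma\notin\mathcal{B}}]+\E_\Sigma[\Phi_\Sigma\mathbf{1}_{\Sigma\in\mathcal{B}}]$, the first term is $\P_\Sigma(\Sigma\notin\mathcal{B})$ times a convex combination of functions in $\Psi_1$, hence — since the zero function lies in $\Psi_1$ — itself such a combination; for the second, $\|\Phi_\Sigma\|_\infty$ is deterministically at most a fixed power of $n$ (all constituents are supported in sets of size $n^{O(1)}$, and convolutions of such indicators are bounded by the constant overlap estimates), so $\|\E_\Sigma[\Phi_\Sigma\mathbf{1}_{\mathcal{B}}]\|_\infty\le n^{O(k)}\P_\Sigma(\mathcal{B})=o(1)$. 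Together with the $1+o(1)$ scalar from the first paragraph, which perturbs a function of sup-norm $\le3/2$ by $o(1)$, this places $\Phi$ within $o(1)$ — hence within $\eta$ once $n$ is large enough — of a convex combination of functions in $\Psi_1$. The only events on $U_1,U_2,U_3$ invoked are $|U_\ell|=(1+o(1))pn$ (Lemma~\ref{singleset}), the constant overlap bounds (Lemma~\ref{doubleconvolution}), the quantitative convolution estimates of Lemma~\ref{tripleconvolution} and routine variants of it, and the $(\e,q)$-goodness of $(W_1,W_2,W_3)$; all of these have probability $1-o(1)$ and are independent of $\Phi$, so they hold simultaneously for every product $\Phi$.

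The step I expect to be the main obstacle is the fourth condition above: showing the product of the $k$ normalised-support convolutions almost never exceeds $3/2$ forces one to handle the two kinds of constituents by different arguments — the bounded ones via the $(\e,q)$-good hypothesis, the $\mu$-bounded ones via a direct concentration inequality — and, more delicately, to make the failure probability super-polynomially small so that the offending restrictions can simply be discarded from the convex combination without harming the uniform approximation. This is exactly where the additional $(\log n)^{1/3}$ factor in the density is needed: it promotes the "bounded almost everywhere" statements of the analogous step in \cite{CG10} to the uniform bounds that the definition of $\Psi_1$ requires.
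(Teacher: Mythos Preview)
Your overall architecture --- write the product as an expectation over random restrictions, then discard the restrictions for which the product falls outside $\Psi_1$ --- is exactly the paper's. The gap is in the error-term analysis, precisely where you anticipated trouble.

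You bound $\|\Phi_\Sigma\|_\infty$ only by ``a fixed power of $n$'' (indeed, each factor can be as large as $q^{-3}p^2\asymp n^{2/3}$, so the product is of order $n^{2k/3}$) and then try to compensate with $\P_\Sigma(\mathcal{B})=o(n^{-M})$. Neither of your two routes to that bound works as stated. Dominating the restricted supports by random density-$q$ sets and invoking goodness gives only $\P_\Sigma(\mathcal{B})=o(1)$: condition~2 in the definition of $(\e,q)$-good promises nothing stronger. The direct Bernstein route along the lines of Lemma~\ref{tripleconvolution} yields a failure probability of order $\exp(-\Omega(\e^2q^3n^2))=n^{-\Omega(D^3/k^2)}$; but $D$ is fixed by the hypothesis, not yours to enlarge, and with the value of $D$ that actually makes good triples exist (Lemma~\ref{threesets} gives $D\asymp k^{2/3}$) this is only $n^{-\Omega(1)}$, far short of the $o(n^{-2k/3})$ you need.

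The paper sidesteps the whole balancing act with an observation you do not make. For \emph{every} $\Sigma$, each restricted constituent satisfies $|Ru_{i,\ell}|\le (p/q)\beta_\ell$ pointwise, where $\beta_\ell$ is either $\mu_\ell$ or some $\omega_{\ell,u}$. The paper first establishes, as a probability-$(1-o(1))$ event on $(U_1,U_2,U_3)$, that every mixed convolution $\beta_1*\beta_2*\beta_3^-$ is uniformly at most $2$; this gives the deterministic bound $\|\Phi_\Sigma\|_\infty\le\bigl(2(C/D)^3\bigr)^k$, a constant independent of $n$. With that in hand, the $o(1)$ bound on $\P_\Sigma(\mathcal{B})$ --- which \emph{does} follow from goodness via exactly your coupling idea, using the two-stage sampling $V_\ell=(q/p)$-subset of $U_\ell$ --- is all that is needed. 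So the correct fix is not to push the bad-set probability down but to push the sup-norm bound down to a constant.
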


\begin{proof}
Since $(W_1,W_2,W_3)$ is a good triple, the probability that a random triple $(V_1,V_2,V_3)$, where the elements of each $V_i$ are chosen independently with probability $q$, complements $(W_1,W_2,W_3)$ is $1-o(1)$.
But we can choose the triple $(V_1,V_2,V_3)$ by first choosing a triple $(U_1,U_2,U_3)$, with elements chosen independently with probability $p$, and then letting each $V_i$ be a random subset of $U_i$ with elements chosen independently with probability $q/p=D/C$. Therefore, with probability $1-o(1)$ the triple $(U_1,U_2,U_3)$ is such that with probability $1-o(1)$ a random triple of subsets $(V_1,V_2,V_3)$ chosen in this way complements the triple $(W_1,W_2,W_3)$. In particular, this also implies that, with probability $1-o(1)$, all convolutions $\b_1*\b_2*\b_3^-$, where each $\b_i$ is either $\omega_{i,u_i}$ or $\mu_i = p^{-1} \chi_{U_i}$, satisfy $\|\b_1*\b_2*\b_3^-\|_\infty \leq 2$.

Now let us fix $(U_1,U_2,U_3)$ such that this is the case. Let $\prod_{i=1}^kf_i*g_i*h_i^*$ be a product of basic anti-uniform functions, where each $f_i$ either satisfies $0\leq f_i\leq 1$ or $0\leq f_i\leq\mu_1$, and similarly for $g_i$ and $h_i$ with $\mu_2$ and $\mu_3$. 
If $0\leq f_i\leq 1$, then let the random restriction $Rf_i$ be defined as follows. We choose $u$ uniformly at random from $G$ and then set $Rf_i(x)$ to be $q^{-1}f_i(x)$ if $x\in W_1+u$ and $0$ otherwise. If $0\leq f_i\leq\mu_1$, then let $Rf_i(x)=(p/q)f_i(x)$ with probability $q/p$ and $0$ otherwise, with the choices being independent. Define the random restrictions $Rg_i(x)$ and $Rh_i(x)$ in the obvious corresponding ways.

As remarked earlier, we then have that
\[\prod_{i=1}^kf_i*g_i*h_i^*=\E_\Sigma \prod_{i=1}^kRf_i*Rg_i*Rh_i^*.\]
This does not yet finish the proof, since the functions $\prod_{i=1}^kRf_i*Rg_i*Rh_i^*$ do not necessarily belong to $\Psi_1$. However, because $(W_1,W_2,W_3)$ is a good triple, we see that with probability $1-o(1)$ we have $\|\sigma(Rf_i)*\sigma(Rg_i)*\sigma(Rh_i^*)\|_\infty\leq 1+\e$ for every $i$, which, since $(1+\e)^k\leq 3/2$, implies that $F =\prod_{i=1}^kRf_i*Rg_i*Rh_i^*$ belongs to $\Psi_1$.

Let $\Sigma_1$ be the subset of the probability space $\Sigma$ for which the random function $F$ belongs to $\Psi_1$ and let $\Sigma_2=\Sigma\setminus\Sigma_1$. We know that $\|Rf_i*Rg_i*Rh_i^*\|_\infty \leq(p/q)^3\|\beta_1*\beta_2*\beta_3^-\|_\infty$, where each $\beta_i$ is equal to $\mu_i$ or $\omega_{i,u_i}$, so $\|Rf_i*Rg_i*Rh_i^*\|_\infty \leq 2(p/q)^3=2(C/D)^3$. Therefore, we always have the bound $\|F\|_\infty\leq(2C/D)^{3k}$. 

By the law of total probability, we have
\[ \prod_{i=1}^kf_i*g_i*h_i^*=\P[F\in\Psi_1]\,\E_{\Sigma_1}F+\P[F\notin\Psi_1]\,\E_{\Sigma_2}F.\]
The first term is a convex combination of functions in $\Psi_1$ and the second has $\ell_\infty$ norm $o(1)(2C/D)^{3k}=o(1)$. This proves the result.
\end{proof}

It remains to show that $\Psi_1$ has a subset $\Psi$ that is not too large, such that every convex combination of functions in $\Psi_1$ can be uniformly approximated by a convex combination of functions in $\Psi$. This we do in a crude way. Given $\d>0$, let $\D$ be a $\d$-net of the unit disc in $\C$ that includes 0 and has size at most $16/\d^2$ and let $\Psi$ consist of all functions $\prod_{i=1}^kf_i*g_i*h_i^*\in\Psi_1$ such that every value of every $f_i,g_i$ and $h_i$ is of the form $q^{-1}z$ for some $z\in\D$. 

Now let $\prod_{i=1}^kf_i*g_i*h_i^*\in\Psi_1$. For each $i$ we can choose functions $f_i',g_i'$ and $h_i'$ taking values in $q^{-1}\D$ with $|f_i-f_i'|\leq\d\sigma(f_i)$, $|g_i-g_i'|\leq\d\sigma(g_i)$ and $|h_i-h_i'|\leq\d\sigma(h_i)$. By telescoping, it follows from the triangle inequality that 
\[\|f_i*g_i*h_i^*-f_i'*g_i'*h_i'^*\|_\infty\leq 3\d\|\sigma(f_i)*\sigma(g_i)*\sigma(h_i^*)\|_\infty\]
and, more generally, that
\[\Bigl\|\prod_if_i*g_i*h_i^*-\prod_if_i'*g_i'*h_i'^*\Bigr\|_\infty
\leq 3k\d\Bigl\|\prod_i\sigma(f_i)*\sigma(g_i)*\sigma(h_i^*)\Bigr\|_\infty\leq 9k\d/2.\]

We are now ready to prove the main result of this section.

\begin{lemma} \label{maintransfer}
For every $\e>0$ and every positive integer $k$ there exist constants $C$ and $D$ with the following property. Let $p=Cn^{-2/3}(\log n)^{1/3}$ and let $q=Dp/C=Dn^{-2/3}(\log n)^{1/3}$. Let $U$ be a random set where each element is chosen independently with probability $p$. Let $\mu=p^{-1}\chi(U)$. Let $U_1,U_2,U_3$ be further random sets chosen independently in the same way and for each $i$ let $\mu_i=p^{-1}\chi(U_i)$. Then, with probability $1-o(1)$, $|\langle\mu-1,\xi\rangle|\leq\e$ for every product $\xi$ of at most $k$ basic anti-uniform functions (as defined just before Corollary \ref{uniformbound}).
\end{lemma}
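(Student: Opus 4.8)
The plan is to build Lemma~\ref{maintransfer} from three pieces that are essentially already in place: Corollary~\ref{uniformbound}, which approximates an arbitrary product of at most $k$ basic anti-uniform functions (within $\eta$ in $\|\cdot\|_\infty$) by a convex combination of functions in $\Psi_1$; the net reduction carried out just above, which approximates every convex combination of functions in $\Psi_1$ (within $9k\d/2$ in $\|\cdot\|_\infty$) by a convex combination of functions in the much smaller set $\Psi$; and a concentration estimate for $\langle\mu-1,g\rangle$ with $g$ a fixed bounded function, to which a union bound over $\Psi$ can then be applied. The union bound is legitimate because $\mu$ depends only on $U$, which is independent of $U_1,U_2,U_3$ and of the auxiliary sets $W_1,W_2,W_3$, so we may condition on those sets (hence on $\Psi$) and then union over the finitely many $g\in\Psi$. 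I would take $\e\leq 1$ without loss of generality, and fix, in terms of $\e$ and $k$ only, the parameters $\eta=\e/9$ (the approximation parameter in Corollary~\ref{uniformbound}), a $\d>0$ with $27k\d/2\leq\e/3$ (for the net $\Psi$), $\eta_0=\e/3$, and $\e'=1/(4k)$ (the goodness parameter needed by Corollary~\ref{uniformbound}); then let $D$ be the constant produced by Lemma~\ref{threesets} for goodness parameter $\e'$, and defer the choice of $C$ to the end. By Lemma~\ref{threesets} the triple $(W_1,W_2,W_3)$ is $(\e',q)$-good with probability $1-o(1)$; conditioning on that, Corollary~\ref{uniformbound} holds with probability $1-o(1)$; and by Lemma~\ref{singleset} all of $|U|,|U_1|,|U_2|,|U_3|,|W_1|,|W_2|,|W_3|$ lie within a factor $2$ of their means with probability $1-o(1)$. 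On the intersection of these events, Corollary~\ref{uniformbound} together with the net reduction gives, for every product $\xi$ of at most $k$ basic anti-uniform functions, a convex combination $\omega$ of functions in $\Psi$ with $\|\xi-\omega\|_\infty\leq\eta+9k\d/2$.

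For the concentration step, the key observation is that every $g\in\Psi$ satisfies $\|g\|_\infty\leq 3/2$: writing $g=\prod_{i=1}^k f_i*g_i*h_i^*$, we have $|f_i|\leq q^{-1}\chi_{\supp f_i}=\sigma(f_i)$ because $\|f_i\|_\infty\leq q^{-1}$, and likewise $|g_i|\leq\sigma(g_i)$ and $|h_i^*|\leq\sigma(h_i^*)$, so $|g|\leq\prod_i\sigma(f_i)*\sigma(g_i)*\sigma(h_i^*)$ pointwise, and the fourth defining property of $\Psi_1$ bounds the right-hand side by $3/2$. Now $\langle\mu-1,g\rangle$ is $n^{-1}$ times a sum of $n$ independent mean-zero random variables (one for each element of $G$, since $\mu=p^{-1}\chi(U)$ and the indicators $\chi_U(x)$ are independent), each of modulus at most $3p^{-1}/2$ and with variances summing to at most $9np^{-1}/4$. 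Applying Bernstein's inequality to the real and imaginary parts separately, and using $\eta_0\leq 1/3$, gives $\mathbb{P}\bigl[|\langle\mu-1,g\rangle|>\eta_0\bigr]\leq 4\exp(-c\,np\,\eta_0^2)$ for an absolute constant $c>0$.

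It remains to bound $|\Psi|$ and to pin down $C$. Each $g\in\Psi$ is a product of $3k$ functions, each determined by a support of size at most $2qn$ — contained either in $U_j$ (of size at most $2pn$) or in one of $n$ translates of $W_j$ (of size at most $2qn$) — together with a choice, at each point of the support, of a value in a fixed set of size at most $16\d^{-2}$. Using $\binom{2pn}{2qn}\leq(ep/q)^{2qn}=(eC/D)^{2qn}$ and $\log n=o(qn)$, this gives $\log|\Psi|\leq Akqn\bigl(\log(C/D)+\log(1/\d)+1\bigr)$ for an absolute constant $A$. Since $np=(C/D)qn$, the union bound $|\Psi|\cdot 4\exp(-c\,np\,\eta_0^2)$ tends to $0$ provided $c(C/D)\eta_0^2>Ak\bigl(\log(C/D)+\log(1/\d)+1\bigr)$, which holds once the ratio $C/D$ is large enough, the left side being linear in $C/D$ and the right side only logarithmic; fixing such a ratio determines $C=(C/D)\cdot D$, a constant depending only on $\e$ and $k$, and $C/D>1$ makes the sub-sampling used in Corollary~\ref{uniformbound} legitimate. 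On the intersection of all the good events, which has probability $1-o(1)$, we then obtain for every product $\xi$ of at most $k$ basic anti-uniform functions that
\[|\langle\mu-1,\xi\rangle|\leq\|\mu-1\|_1\,\|\xi-\omega\|_\infty+|\langle\mu-1,\omega\rangle|\leq 3\bigl(\eta+9k\d/2\bigr)+\eta_0\leq\e,\]
where $\|\mu-1\|_1\leq 3$ follows from $|U|\leq 2pn$, and $|\langle\mu-1,\omega\rangle|\leq\eta_0$ because $\omega$ is a convex combination of functions $g\in\Psi$, each satisfying $|\langle\mu-1,g\rangle|\leq\eta_0$ on the good event.

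The main obstacle is precisely this parameter balance in the final union bound: both $\log|\Psi|$ and the Bernstein exponent $np\eta_0^2$ are of order $qn$, so the argument works only because the restriction built into the definition of $\Psi_1$ that every support have size at most $2qn$ forces $\log|\Psi|$ to grow like $kqn\log(C/D)$ rather than like $(C/D)qn$, whereas the concentration exponent $np\eta_0^2=(C/D)qn\eta_0^2$ grows linearly in the ratio $C/D$. Taking $C/D$ large therefore wins, and it is essential that $\eta$, $\d$, $\e'$, and hence $D$, depend only on $\e$ and $k$, so that the choice of $C$ can be made last.
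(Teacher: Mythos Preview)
Your proof is correct and follows essentially the same three-step approach as the paper: reduce to $\Psi_1$ via Corollary~\ref{uniformbound}, pass to the finite net $\Psi$ via the rounding argument, then apply Bernstein's inequality with a union bound over $\Psi$, balancing $\log|\Psi|\asymp kqn\log(C/D)$ against the concentration exponent $\asymp (C/D)qn\e^2$. The only cosmetic differences are in the specific parameter choices (you take $\eta=\e/9$, $\eta_0=\e/3$, etc., whereas the paper uses $\e/4$ throughout), your $\|g\|_\infty\leq 3/2$ versus the paper's looser $\leq 2$, and your slightly more explicit treatment of the conditioning that makes the union bound legitimate.
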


\begin{proof}
By Corollary \ref{uniformbound}, with probability $1-o(1)$ we can express each $\xi$ as a convex combination of functions in $\Psi_1$ plus an error term that is uniformly bounded by $\e/4$. And then, by the remarks above, if we set $\d=\e/18k$, we can express every convex combination of functions in $\Psi_1$ by a convex combination of functions in $\Psi$ plus an error term that is again uniformly bounded by $\e/4$. Thus, $\xi=\xi_1+\xi_2$, where $\xi_1$ is a convex combination of functions in $\Psi$ and $\|\xi_2\|_\infty \leq\e/2$.

Since 
\[|\langle\mu-1,\xi\rangle|\leq|\langle\mu-1,\xi_1\rangle|+|\langle\mu-1,\xi_2\rangle|\leq|\langle\mu-1,\xi_1\rangle|+p^{-1}|U|\e/2n,\]
and since $|U|\leq 3pn/2$ with probability $1-o(1)$, it is enough to prove that with probability $1-o(1)$, $|\langle\mu-1,\psi\rangle|\leq\e/4$ for every $\psi\in\Psi$. For this we can use a union bound. That is, we need to obtain upper bounds for the number of functions in $\Psi$ and for the probability that $|\langle\mu-1,\psi\rangle|>\e/4$ for any individual $\psi\in\Psi$.

For any given set $A$ of size at most $2qn$, the number of functions supported in $A$ and taking values in $q^{-1}\Delta$ is $|\Delta|^{2qn}$. Since we are taking $\d$ to be $\e/18k$, this is at most $(10^4k^2/\e^2)^{2qn}$. The number of possible supports for each function involved in a product of convolutions in $\Psi$ is at most $n+\binom{2pn}{2qn}$, which is at most $(4C/D)^{2qn}$. Therefore, the number of functions in $\Psi$ is at most $(10^5k^2C/\e^2D)^{6kqn}$. Since each $\psi\in\Psi$ has $\ell_\infty$ norm at most 2, the probability that $|\langle\mu-1,\psi\rangle| > \e/4$ is, by Bernstein's inequality, at most $2\exp(-\e^2n^2/32(4p^{-1}n+\frac 16p^{-1}\e n))\leq2\exp(-\frac 1{160}\e^2pn)$, since $\langle\mu-1,\psi\rangle$ is an average of $n$ random variables, each of mean zero, second moment at most $4p^{-1}$, and maximum at most $2p^{-1}$. 

We are therefore done provided that $(10^5k^2C/\e^2D)^{6kqn}\exp(-\frac 1{160}\e^2pn)=o(1)$. Taking logs, we require
\[\frac 1{160}\e^2pn-6kqn\log(10^5k^2C/\e^2D)\]
to tend to infinity, for which it is enough if $C/D > 10^3 k \e^{-2}\log(10^5k^2C/\e^2D)$. If we let $D$ be the constant given by Lemma \ref{threesets}, then we are done.
\end{proof}

We are not quite done, since we have assumed that our basic anti-uniform functions were built from functions supported on random sets that were independent of $U$. To recover the statement for basic anti-uniform functions built from functions supported on $U$ itself, we think of $U$ as being the union of $t$ independent random sets $U_1, \dots, U_t$, each chosen with probability $\frac{1}{t} C n^{-2/3}(\log n)^{1/3}$, with $U_i$ having associated measure $\mu_i$. With high probability, these sets are all disjoint, so $\mu = \E_i \mu_i$ and any function $f$ with $0 \leq |f| \leq \mu$ can be written as an expectation $\E_i f_i$ where $0 \leq |f_i| \leq \mu_i$ for each $i$. In particular, any expression of the form $\sp{\mu - 1, \xi}$ can be rewritten as the expectation over expressions of the form $\sp{\mu_i - 1, \xi_{j_1, \dots, j_{3k}}}$, where the indices $j_1, \dots, j_{3k}$ indicate which of the sets $U_1, \dots, U_t$ the basic anti-uniform function $\xi_{j_1, \dots, j_{3k}}$ is built over.

Since $i, j_1, \dots, j_{3k}$ all vary over $1, 2, \dots, t$, the probability that any two of them are equal is at most $\binom{3k+1}{2}/t$. If they are all different, Lemma~\ref{maintransfer} applied with $C/t$ and $\epsilon/2$ implies that $|\sp{\mu_i - 1, \xi_{j_1, \dots, j_{3k}}}| \leq \epsilon/2$, while in general, using that $\|\mu_a*\mu_b*\mu_c^-\|_\infty \leq 2$ with high probability for all $a, b, c$, we have the bound $|\sp{\mu_i - 1, \xi_{j_1, \dots, j_{3k}}}| \leq 2^{k+1}$. Therefore,
\begin{align*}
|\sp{\mu - 1, \xi}| & =| \E_{i, j_1, \dots, j_{3k}} \sp{\mu_i - 1, \xi_{j_1, \dots, j_{3k}}}|\\ 
& \leq \frac{\epsilon}{2} + 2^{k+1}\frac{\binom{3k+1}{2}}{t} \leq \epsilon,
\end{align*}
for an appropriate $t$. This completes the proof of our transference principle. In the next section, we will show how to apply it.

\section{Freiman homomorphisms are affine almost everywhere}

In this section, we shall use the main result of the previous section to prove that if $G$ is an Abelian group of order $n$ and $U$ is a random subset of $G$ chosen with probability $C(\log n)^{1/3} n^{-2/3}$, then, with high probability, for every finitely generated Abelian group $H$ and every Freiman homomorphism $\phi:U\to H$ there is an affine homomorphism $\psi:G\to H$ such that $\psi(u)=\phi(u)$ for at least 99.99\% of the elements $u\in U$. Then in the next section we shall get from 99.99\% to 100\%.

The condition here that $H$ is finitely generated is not important, since we can always restrict to the subgroup generated by the image of $U$, but it will be convenient for us to assume it so that we can take expectations over the characters of $H$. 

Suppose now that $\phi:U\to H$ is a Freiman homomorphism. To apply our transference principle, we need a function from $U$ to $\C$, and the obvious way of producing such a function is to compose $\phi$ with a character, except that we shall normalize this function by multiplying it by the characteristic measure $\mu$ of $U$. Accordingly, if $\chi$ is a character on $H$, let us define $f_\chi$ to be the function $\mu(\chi\circ\phi)$. Note that $\chi\circ\phi$ is a Freiman homomorphism from $U$ to the unit circle. 

Let us now fix $\chi$ and write $f$ for $f_\chi$. Let $\eta>0$ be a constant to be chosen later. By the transference principle, we can find a function $g:G\to\C$ such that $\|g\|_\infty\leq 1$, $|M(g)-M(f)|\leq\eta$, and $|\sp{g-f,\tau}|\leq\eta$ for every character $\tau:G\to\C$. 

We shall now show that $M(f)\approx 1$, which implies that $M(g)\approx 1$, which implies that $g$ is approximately equal to a character $\tau$, which implies that $\sp{f,\tau}\approx 1$. This will show that $\|\hf_\chi\|_{12}\approx 1$ for every character $\chi:H\to\C$, which (as later calculations will reveal) effectively allows us to prove that $\phi$ coincides with a homomorphism of order 6 on almost all of $U$. But since $3U=G$ with high probability, a homomorphism of order 6 on almost all of $U$ gives us a homomorphism of order 2 on almost all of $G$. 

In the rest of this section we shall give the details. To avoid a lot of repetition, let us establish once and for all that $G$ is a finite Abelian group of order $n$, $\eta>0$ is a small positive constant, $U$ is a random subset of $G$ chosen with probability $C(\log n)^{1/3} n^{-2/3}$, and $\mu$ is the characteristic measure of $U$. In addition, we shall adopt the following convention. When we make a statement that involves $U$, it is to be understood that that statement is valid with probability $1-o(1)$. Moreover, if that statement involves a universal quantifier (either explicitly, or via the word ``let"), it is to be understood that the ``with probability $1-o(1)$" comes \textit{before} the quantifier. For instance, if we write, ``Let $A$ be such that $P(A)$. Then $Q(A,U)$," that is shorthand for, ``With probability $1-o(1)$, $Q(A,U)$ for every $A$ such that $P(A)$." 

We begin with a simple probabilistic lemma. Though it has a much easier proof, we note here that it follows as a corollary of the fact that $\sp{\mu - 1, \xi}$ is small for all basic anti-uniform functions $\xi$.

\begin{lemma}\label{Mofmu}
$M(\mu)\geq 1-\eta$.
\end{lemma}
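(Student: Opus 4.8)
The statement $M(\mu)\geq 1-\eta$ says that $\E_{(x,y,z,w)\in\Gamma}\mu(x)\mu(y)\ol{\mu(z)\mu(w)}$ is close to $1$. Since $\mu$ is real and non-negative, this is just $\E_{(x,y,z,w)\in\Gamma}\mu(x)\mu(y)\mu(z)\mu(w)$, which is $p^{-4}$ times the probability (over the random set $U$ and over a uniformly random non-degenerate additive quadruple) that all four coordinates lie in $U$, rescaled by $|\Gamma|$. The plan is to compute the first and second moments of the random variable $X=\sum_{(x,y,z,w)\in\Gamma}\chi_U(x)\chi_U(y)\chi_U(z)\chi_U(w)$ directly and then apply a concentration bound. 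Alternatively, and more in the spirit of the remark preceding the lemma, one can observe that $M(\mu)-1 = M(\mu)-M(1)$, and by the Lemma just before Lemma~\ref{dualnorm} (the one bounding $|M(f)-M(g)|$ by $4\|f-g\|$), this is at most $4\|\mu-1\|$, which is the maximum of $|\sp{\mu-1,u}|$ over basic anti-uniform functions $u$; then Lemma~\ref{maintransfer} (applied with $k=1$ and $\e=\eta/4$) gives the bound immediately. I would present the short probabilistic argument as the main proof and mention the transference-principle derivation as the alternative the remark alludes to.

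For the direct argument, first I would note $\E X = |\Gamma|\,p^4$, which equals the number of non-degenerate additive quadruples times $p^4$; since the number of such quadruples is $(1+o(1))n^3$ and we normalise by dividing by $|\Gamma|$ and by $p^4$, we get $\E[M(\mu)] = 1$ exactly (the normalisation in the definition of $M$ as an average over $\Gamma$ makes this clean). The real content is concentration: I would bound $\var(X)$ by the standard technique of counting pairs of additive quadruples $((x,y,z,w),(x',y',z',w'))$ sharing at least one common element, so that their indicator products are not independent. Pairs sharing $j$ elements contribute $O(n^{6-j})$ choices and a factor $p^{8-j}$ (since $8-j$ distinct elements of $U$ are required), so $\var(X) = \sum_{j\geq 1} O(n^{6-j}p^{8-j}) = O(n^5 p^7)$ (dominated by $j=1$). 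Since $\E X = (1+o(1))n^3 p^4$, we have $\var(X)/(\E X)^2 = O(n^5 p^7/(n^6 p^8)) = O(1/(np)) = O(n^{-1/3}(\log n)^{-1/3}) = o(1)$. By Chebyshev (or, to get a better tail if needed for the $1-o(1)$ convention, a suitable Bernstein/Kim–Vu-type bound on the polynomial $X$ in the independent indicators), $X = (1+o(1))\E X$ with probability $1-o(1)$, so $M(\mu) = 1+o(1) \geq 1-\eta$ for $n$ large.

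The one subtlety I would be careful about is the role of the degenerate quadruples. Because $\Gamma$ excludes them, all the variance estimates are clean: when we count pairs of quadruples sharing an element, the sharing cannot force further collapses of the kind that would appear if we allowed degenerate quadruples, and the count $n^{6-j}$ is honest. This is exactly why the definition insists on non-degeneracy, as the paper stresses — if degenerate quadruples were included, $X$ would be of order $n^{2}p^{4}$ larger in places and the concentration would fail at this density. So I would state explicitly that all quadruples in sight are non-degenerate and that the $j$-element-sharing count is $O(n^{6-j})$ with the correct $p$-power.

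I expect the main obstacle, such as it is, to be purely bookkeeping: organising the variance computation so that the reader sees that $j=1$ dominates and that no sub-case secretly produces a larger contribution (in particular, making sure that when two quadruples share a single element the remaining degrees of freedom really do give $n^{5}$ and require $7$ independent membership events, not fewer). This is routine but needs care with the non-degeneracy constraints. Given the paper's convention that ``with probability $1-o(1)$'' is implicit, Chebyshev suffices since we only need a single event to hold, so I would not bother with the sharper concentration inequality unless a later application needs a quantitative rate.
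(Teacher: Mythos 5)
Your proposal is correct and matches the paper's intent: the paper states this lemma without a written proof, remarking only that it has ``a much easier proof'' (your direct second-moment argument) and that it also follows from the smallness of $\sp{\mu-1,\xi}$ for basic anti-uniform functions (your alternative route via the bound $|M(f)-M(g)|\leq 4\|f-g\|$ applied to $f=\mu$, $g=1$ together with Lemma~\ref{maintransfer}). The only quibbles are cosmetic bookkeeping in the variance estimate --- for instance, the pair count for $j=4$ is $O(n^3)$ rather than $O(n^{6-j})=O(n^2)$, and the quadruples of $\Gamma$ with $x=y$ or $z=w$ (which the definition permits) need a separate, negligible estimate of order $n^2p^3=O(\log n)$ --- none of which affects the conclusion.
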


Recall that for each character $\tau : G \rightarrow \C$ and any function $f : G \rightarrow \C$, the Fourier transform is given by $\hf(\tau) = \sp{f, \tau} = \E_x f(x) \ol{\tau(x)}$. We also have the Fourier inversion formula, $f(x) = \sum_\tau \hf(\tau) \tau(x)$. In keeping with these normalisations, we write $\|f\|_p = (\E_x |f(x)|^p)^{1/p}$, while $\|\hf\|_p = (\sum_\tau |\hf(\tau)|^p)^{1/p}$. This allows us to state Parseval's formula $\|f\|_2 = \|\hf\|_2$ and a number of related identities in a clean fashion.

\begin{lemma}\label{ell12estimate}
Let $H$ be a finitely generated Abelian group, let $\phi:U\to H$ be a Freiman homomorphism, let $\chi:H\to\C$ be a character, and let $f=\mu(\chi\circ\phi)$. Then $\|\hf\|_{12}^{12}\geq 1-36\eta$.
\end{lemma}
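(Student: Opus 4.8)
The plan is to show $M(f)$ is close to $1$, transfer to a bounded function $g$, and then extract a large Fourier coefficient. First I would observe that $|f|\le\mu$ and $f$ agrees with $\mu$ in modulus on all of $U$, so by Lemma~\ref{Mofmu} we have a lower bound for the ``would-be'' value of $M(f)$: the key point is that since $\chi\circ\phi$ is a Freiman homomorphism, for every non-degenerate additive quadruple $(x,y,z,w)\in\Gamma$ with $x,y,z,w\in U$ we have $(\chi\circ\phi)(x)(\chi\circ\phi)(y)\ol{(\chi\circ\phi)(z)(\chi\circ\phi)(w)}=1$, because $x+y=z+w$ forces $\phi(x)+\phi(y)=\phi(z)+\phi(w)$ and hence the corresponding product of character values is $1$. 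Therefore $f(x)f(y)\ol{f(z)f(w)}=\mu(x)\mu(y)\ol{\mu(z)\mu(w)}$ on exactly the set of quadruples that contribute to $M(\mu)$, so $M(f)=M(\mu)\ge 1-\eta$. (If one is worried about complex moduli: $|f(v)|=\mu(v)$ for all $v$, and the phases cancel precisely on $\Gamma$-quadruples inside $U$, which are the only ones where $\mu$ is non-zero.)

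Next I would apply the transference principle to $f$, with the norm $\|.\|$ of Section~3: with probability $1-o(1)$ there is a $g$ with $\|g\|_\infty\le1$, $|M(g)-M(f)|\le 4\eta$ (by the Lemma bounding $|M(f)-M(g)|$ in terms of $\|f-g\|$, together with Corollary~\ref{maintool} giving $\|f-g\|$ small), and $|\hg(\tau)-\hf(\tau)|\le\eta$ for every character $\tau$ of $G$. Hence $M(g)\ge 1-5\eta$ or so. Now I want to turn $M(g)\approx 1$ into $\|\hg\|_{12}\approx 1$. Here $M(g)=\E_{(x,y,z,w)\in\Gamma}g(x)g(y)\ol{g(z)g(w)}$, and the point is that this is \emph{almost} $\|\hg\|_4^4$ in Fourier terms — more precisely, the average over \emph{all} additive quadruples of $g(x)g(y)\ol{g(z)g(w)}$ equals $\sum_\tau|\hg(\tau)|^4 = \|\hg\|_4^4$ (with the normalisation that the Fourier transform is an average and $\|\hg\|_p$ is an $\ell^p$ sum), and the degenerate quadruples contribute only $O(n^{-1/3})$ since $\|g\|_\infty\le 1$. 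Wait — this gives an $\ell^4$ statement, not $\ell^{12}$; I need to be more careful about the exponent.

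Let me reconsider: the quantity that appears naturally with $\Gamma$ over $\Z_N$ is the $U^2$-type average, but the excerpt's target is $\|\hf\|_{12}^{12}$, so the factor of $12$ must come from the three-fold convolution structure. Indeed $M(f)=\E_{(x,y,z,w)\in\Gamma}f(x)f(y)\ol{f(z)f(w)}$ but the \emph{number} of quadruples in $\Gamma_x$ through a fixed $x$ is $\approx n$ only because $z+w-y=x$ has $\approx n^2$ solutions and the non-degeneracy removes a negligible proportion — so actually $M(f)$ is (up to the degenerate correction and renormalisation) $\E_x (f*f*f^-)(x)$ divided by... hmm. The cleaner route: since $|M(g)-\|\hg\|_4^4|=O(n^{-1/3})$ (counting degenerate quadruples) isn't the right identity either given the normalisation of $\Gamma$ as a probability average — under $\E_{\Gamma}$, $M(g)$ equals $n^3\sum_\tau|\hg(\tau)|^4$ up to lower order, which would be enormous. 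I think the resolution is that the correct reading makes $M(f)\approx \|\hf\|_{12}^{12}$ only after one more convolution iteration or a different normalisation, and the ``$36\eta$'' with a $12$th power strongly suggests $M$ is being compared to $\sum_\tau|\hf(\tau)|^{12}$ via $|\hf(\tau)|\le\|f\|_1\le 1$ so that $\|\hf\|_{12}^{12}\ge$ (max coefficient)$^{12}\ge M(f)^{3}$-type chains.

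Given this uncertainty, the robust plan I would actually write is: (i) establish $M(f)=M(\mu)\ge 1-\eta$ via the Freiman homomorphism property, exactly as above; (ii) transfer to bounded $g$ with $M(g)\ge 1-5\eta$ and $\|\hf-\hg\|_\infty\le\eta$; (iii) note $M(g)$ equals, up to an error $O(n^{-1/3})=o(1)$ from degenerate quadruples, the full-quadruple average which Fourier-expands to a sum $\sum_\tau c_\tau \hg(\tau)^{a}\ol{\hg(\tau)}^{b}$ over the frequencies forced by $x+y=z+w$; using $\|g\|_\infty\le1$ (so $|\hg(\tau)|\le1$) this is bounded by $\|\hg\|_{12}^{12}$ after accounting for the three convolution factors and Hölder; (iv) convert back from $\hg$ to $\hf$ using $|\hf(\tau)-\hg(\tau)|\le\eta$ for every $\tau$ and $\sum_\tau|\hf(\tau)|^2=\|f\|_2^2\le 1$, $\sum_\tau|\hg(\tau)|^2\le\|g\|_2^2\le1$, so that $\|\hf\|_{12}^{12}\ge\|\hg\|_{12}^{12}-O(\eta)\ge 1-O(\eta)$, tracking the constant to land at $1-36\eta$. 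The main obstacle is step (iii): pinning down exactly which Fourier expression $M(g)$ equals once one uses that $\Gamma$ is an average over \emph{non-degenerate} quadruples only (so that the identity $M(g)=\|\hg\|_4^4$ or its $12$th-power analogue holds only up to a $o(1)$ correction), and making sure the $\|g\|_\infty\le1$ bound is used in the right place to pass from the (possibly signed, possibly larger) exact Fourier sum to the clean lower bound $\|\hg\|_{12}^{12}$. Everything else is routine bookkeeping with the transference constants.
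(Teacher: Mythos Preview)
Your steps (i) and (ii) are exactly right: $M(f)=M(\mu)\ge 1-\eta$ because the Freiman homomorphism kills the phases on every quadruple in $\Gamma\cap U^4$, and transference then gives a bounded $g$ with $M(g)\ge 1-2\eta$ and $|\hf(\tau)-\hg(\tau)|\le\eta$ for every character $\tau$.

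The gap is in (iii)--(iv). You are looking for a direct Fourier identity between $M(g)$ and $\|\hg\|_{12}^{12}$, and there isn't one: the exponent $12$ has nothing to do with the structure of $M$. The paper's route is through the $\ell^\infty$ norm. First, since the degenerate additive quadruples (those with $x=z,\,y=w$ or $x=w,\,y=z$) contribute $|g(x)|^2|g(y)|^2\ge 0$, the full $U^2$ count dominates the non-degenerate one, so $\|\hg\|_4^4=\|g\|_{U^2}^4\ge 1-2\eta$. Combined with $\|\hg\|_2^2=\|g\|_2^2\le 1$ and the trivial inequality $\sum_\tau|\hg(\tau)|^4\le\|\hg\|_\infty^2\sum_\tau|\hg(\tau)|^2$, this forces $\|\hg\|_\infty^2\ge 1-2\eta$, hence $\|\hg\|_\infty\ge 1-2\eta$. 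Now transfer a \emph{single} coefficient: $\|\hf\|_\infty\ge\|\hg\|_\infty-\eta\ge 1-3\eta$, and therefore $\|\hf\|_{12}^{12}\ge\|\hf\|_\infty^{12}\ge(1-3\eta)^{12}\ge 1-36\eta$. The choice of $12$ is dictated only by the downstream application to additive $12$-tuples, not by anything in this lemma.

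Your proposed step (iv), comparing $\|\hf\|_{12}^{12}$ to $\|\hg\|_{12}^{12}$ coefficient-by-coefficient, would also run into trouble: $\|\hf\|_2^2=\|f\|_2^2=\E_x\mu(x)^2\approx p^{-1}$ is large, so a uniform $\eta$-perturbation of all Fourier coefficients does not obviously control the difference of $\ell^{12}$ norms. Passing through a single large coefficient sidesteps this entirely.
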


\begin{proof}
As mentioned above, there exists a function $g:G\to\C$ such that $\|g\|_\infty\leq 1$, $M(g)\geq M(f)-\eta$, and $\sp{f,\tau}\geq\sp{g,\tau}-\eta$ for every character $\tau:G\to\C$. But 
\[M(f)=\E_{(x,y,z,w)\in\Gamma}\mu(x)\mu(y)\mu(z)\mu(w)\chi(\phi(x)+\phi(y)-\phi(z)-\phi(w))=M(\mu),\]
since $\phi$ is a Freiman homomorphism. Therefore, by Lemma \ref{Mofmu}, $M(f)\geq 1-\eta$. It follows that $M(g)\geq 1-2\eta$.

Now the sum of $g(x)g(y)\overline{g(z)g(w)}$ over degenerate additive quadruples $x+y=z+w$ (that is, ones where $x=z$ and $y=w$ or $x=w$ and $y=z$) is a non-negative real number. Therefore, $\|g\|_{U^2}^4\geq 1-2\eta$. Therefore, $\|\hg\|_4^4\geq 1-2\eta$. Since $\|\hg\|_2=\|g\|_2\leq 1$, it follows that $\|\hg\|_\infty^2\geq 1-2\eta$ and therefore that $\|\hg\|_\infty\geq 1-2\eta$. 

Since $\sp{f,\tau}\geq\sp{g,\tau}-\eta$ for every character $\tau:G\to\C$, it follows that $\|\hf\|_\infty\geq 1-3\eta$ and therefore that $\|\hf\|_{12}^{12}\geq 1-36\eta$, as claimed.
\end{proof}

The proof of the above lemma is the only place where we need to use the transference result of the previous section. (However, the lemma is crucial to our main argument.)

Let us now regard $H$ and $\phi$ as fixed (though it is important that the statement we are proving about $U$ is one that with high probability holds for all $H$ and $\phi$). We shall now convert this statement about $\ell_{12}$ norms of Fourier transforms into a statement about Freiman homomorphisms of order 6. To begin with, we obtain \textit{near} homomorphisms. The rough meaning of the next statement is that for almost every additive 12-tuple $(x_1,\dots,x_{12})$ in $U$, we also have $\phi(x_1)+\dots+\phi(x_{6})=\phi(x_7)+\dots+\phi(x_{12})$.

\begin{lemma} \label{almosthom}
Let $\Gamma$ be the set of all $(x_1,\dots,x_{12})\in G^{12}$ such that $x_1+\dots+x_6=x_7+\dots+x_{12}$ and let $\Phi$ be the set of all $(x_1,\dots,x_{12})\in\Gamma$ such that $\phi(x_1)+\dots+\phi(x_6)=\phi(x_7)+\dots+\phi(x_{12})$. Then
\[\E_{(x_1,\dots,x_{12})\in\Gamma}\ \mu(x_1)\dots\mu(x_{12})\mathbf{1}_{(x_1,\dots,x_{12})\in\Phi}\geq 1-36\eta.\]
\end{lemma}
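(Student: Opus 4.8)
The plan is to reinterpret Lemma~\ref{ell12estimate} as a statement about additive $12$-tuples via a standard Fourier identity, and then average over the characters of $H$ to pull out the indicator function of $\Phi$.

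First I would record the identity that for \emph{any} $f:G\to\C$,
\[
\|\hf\|_{12}^{12} = \E_{(x_1,\dots,x_{12})\in\Gamma}\ f(x_1)\cdots f(x_6)\,\ol{f(x_7)\cdots f(x_{12})},
\]
with $\Gamma$ as in the statement of the lemma. This comes out of writing $|\hf(\tau)|^{12} = \hf(\tau)^6\,\ol{\hf(\tau)^6}$, expanding $\hf(\tau)^6 = \E_{x_1,\dots,x_6}\,f(x_1)\cdots f(x_6)\,\ol{\tau(x_1+\dots+x_6)}$ and $\ol{\hf(\tau)^6} = \E_{x_7,\dots,x_{12}}\,\ol{f(x_7)\cdots f(x_{12})}\,\tau(x_7+\dots+x_{12})$, summing over $\tau$, and using $\sum_\tau\tau(z)=n\mathbf{1}_{z=0}$; the only thing to watch is the normalisation, which works out because $|\Gamma|=n^{11}$ and the character sum contributes the compensating factor $n$.

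Next I would specialise to $f=f_\chi=\mu(\chi\circ\phi)$ for a character $\chi$ of $H$. Since $\mu$ is real, non-negative, and supported on $U$, and $\chi$ has modulus $1$, the integrand above equals $\mu(x_1)\cdots\mu(x_{12})\,\chi\bigl(\phi(x_1)+\dots+\phi(x_6)-\phi(x_7)-\dots-\phi(x_{12})\bigr)$ whenever all the $x_j$ lie in $U$ (so that the relevant values of $\phi$ are defined) and vanishes otherwise; the factor $\mu(x_1)\cdots\mu(x_{12})$ is itself zero unless all $x_j\in U$, so the fact that $\phi$ is only defined on $U$ causes no trouble. Thus
\[
\|\hf_\chi\|_{12}^{12} = \E_{(x_1,\dots,x_{12})\in\Gamma}\ \mu(x_1)\cdots\mu(x_{12})\,\chi\bigl(\phi(x_1)+\dots+\phi(x_6)-\phi(x_7)-\dots-\phi(x_{12})\bigr).
\]

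Finally I would average this over $\chi$ in the (compact) dual group of $H$ with respect to Haar measure — this is the one place the hypothesis that $H$ is finitely generated is used. Since $\E_\chi\chi(h)=\mathbf{1}_{h=0}$, the average over $\chi$ of the right-hand side is exactly $\E_{(x_1,\dots,x_{12})\in\Gamma}\,\mu(x_1)\cdots\mu(x_{12})\,\mathbf{1}_{(x_1,\dots,x_{12})\in\Phi}$, whereas by Lemma~\ref{ell12estimate} every $\|\hf_\chi\|_{12}^{12}$, and hence the average over $\chi$, is at least $1-36\eta$. This gives the claim. The lemma is essentially a translation of Lemma~\ref{ell12estimate} into the language of additive tuples, so there is no real obstacle: the only care needed is in the normalisation of the Fourier identity and in noting that multiplying by $\mu$ renders the composition with the partially-defined map $\phi$ harmless.
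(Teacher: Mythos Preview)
Your proposal is correct and follows essentially the same route as the paper: express $\|\hf_\chi\|_{12}^{12}$ as an average over additive $12$-tuples, apply Lemma~\ref{ell12estimate} to bound each such quantity below by $1-36\eta$, and then average over the characters $\chi$ of $H$ so that $\E_\chi\chi(h)=\mathbf{1}_{h=0}$ converts the integrand into $\mu(x_1)\cdots\mu(x_{12})\mathbf{1}_{(x_1,\dots,x_{12})\in\Phi}$. Your remarks about the normalisation and about $\mu$ being supported on $U$ (so that $\phi$ being only partially defined causes no issue) are exactly the small points the paper glosses over.
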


\begin{proof}
Let us write $f_{\chi}$ for the function $\mu(\chi\circ\phi)$. (Previously we just wrote $f$, but now we need to consider all such functions.) Then, by Lemma \ref{ell12estimate}, we know that $\|\hf_\chi\|_{12}^{12}\geq 1-36\eta$ for every character $\chi$, and therefore that $\E_\chi\|\hf_\chi\|_{12}^{12}\geq 1-36\eta$. Note that
\[\|\hf_\chi\|_{12}^{12} = \sum_{\tau} |\hf(\tau)|^{12} = \sp{\hf^6, \hf^6} = \sp{f * \dots *f, f* \dots *f} = \E_{(x_1, \dots, x_{12}) \in \Gamma} f(x_1) \dots f(x_6) \ol{f(x_7) \dots f(x_{12})}\]
and, therefore, $\E_\chi\|\hf_\chi\|_{12}^{12}$ is equal to 
\[\E_\chi\E_{(x_1,\dots,x_{12})\in\Gamma}\ \mu(x_1)\dots\mu(x_{12})\chi(\phi(x_1)+\dots+\phi(x_6)-\phi(x_7)-\dots-\phi(x_{12})).\]
The expectation over $\chi$ is 1 if $(x_1,\dots,x_{12})\in\Phi$ and 0 otherwise, so we have precisely the expectation on the left-hand side of the inequality we are trying to prove.
\end{proof}

The rough idea of what we want to do next is to define a function $\psi:3U\to H$ by taking $\psi(u)$ to be the most popular value of $\phi(x_1)+\phi(x_2)+\phi(x_3)$ when $x_1+x_2+x_3=u$. Lemma \ref{almosthom} implies that most of the time one value is predominant and that the function $\psi$ thus defined has the property that $u_1+u_2=u_3+u_4$ almost always implies that $\psi(u_1)+\psi(u_2)=\psi(u_3)+\psi(u_4)$. This in turn implies that $\psi$ agrees almost everywhere with an affine homomorphism.

However, arguments of the above kind can get quite messy, so for the sake of tidiness we shall instead postpone for as long as we can the moment where we have to use an averaging argument to commit ourselves to a particular function. The price we pay for this is that we must consider functions whose values are probability distributions on $H$ rather than single elements of $H$. However, this is a very natural thing to do: instead of defining $\psi(u)$ to be the most popular value of $\phi(x_1)+\phi(x_2)+\phi(x_3)$ such that $x_1+x_2+x_3=u$, we define it to be something like the probability distribution where the probability that $\psi(u)=h$ is the probability that $\phi(x_1)+\phi(x_2)+\phi(x_3)=h$ given that $x_1+x_2+x_3=u$. This is not quite accurate, because we need to take account of the fact that $\mu*\mu*\mu$ is not quite constant, so here is the precise definition.

\begin{definition} 
Let $\phi:U\to H$ and let $\pi(H)$ be the set of all non-negative real-valued finitely supported functions on $H$. Then, for each $x\in G$, let $\psi(x)\in\pi(H)$ be the function defined by the formula
\[\psi(x)(h)=\E\{\mu(x_1)\mu(x_2)\mu(x_3):x_1+x_2+x_3=x, \phi(x_1)+\phi(x_2)+\phi(x_3)=h\}.\]
\end{definition}
Typically, $\psi(x)(h)$ will be almost 1 for one $h\in H$ and the sum over $h$ will be asymptotically equal to 1. In other words, it will be concentrated at one $h$, so we can think of $\psi$ as like a function from $G$ to $H$ but slightly fuzzy.

We also need to be able to make sense of expressions such as $\psi(x)+\psi(y)$. That is easy enough: we simply convolve $\psi(x)$ with $\psi(y)$. To avoid confusion, we shall write $\psi(x)*\psi(y)$, which we define formally as follows. Note that the normalization is different from that used earlier: it is more appropriate for convolutions of probability distributions.

\begin{definition}
Let $p,q\in\pi(H)$. The \emph{convolution} $p*q$ of $p$ and $q$ is defined by the formula $(p*q)(h)=\sum_{h_1+h_2=h}p(h_1)q(h_2)$.
\end{definition}
If $\psi(x)$ is concentrated at $h_1$ and $\psi(y)$ is concentrated at $h_2$, then $\psi(x)*\psi(y)$ is concentrated (but not quite as strongly) at $h_1+h_2$. In this situation, convolution can be thought of as a fuzzy version of addition.

We would also like a fuzzy version of subtraction, so we define $p_-(h)$ to be $p(-h)$. Then the fuzzy analogue of $\psi(x)-\psi(y)$ is $\psi(x)*\psi_-(y)$.

Finally, we need a fuzzy version of equality: there are no circumstances under which we can reasonably expect two expressions such as $\psi(x_1)*\psi(x_2)$ and $\psi(x_3)*\psi(x_4)$ to be exactly equal, but we can certainly expect them to be roughly equal. To measure this, we define an inner product in an obvious way. Again, this is defined with a different normalization from earlier.

\begin{definition}
Let $p$ and $q$ be two non-negative finitely supported functions on $H$. Then their \emph{inner product} $\sp{p,q}$ is defined to be $\sum_{h\in H}p(h)q(h)$.
\end{definition}

\noindent If $p$ and $q$ both sum to 1 (or approximately 1), then we regard $p$ and $q$ as close if $\sp{p,q}$ is close to 1. This is a stronger statement than merely that $p$ and $q$ are similar functions: it implies also that $p$ and $q$ are both fairly concentrated at a single value. However, that is exactly what we want to show, so this notion of closeness is useful.
\medskip

The next lemma is just a reformulation of Lemma \ref{almosthom}.

\begin{lemma} \label{almosthom2}
$\E_{z_1+z_2=z_3+z_4}\sp{\psi(z_1)*\psi(z_2),\psi(z_3)*\psi(z_4)}\geq 1-36\eta$.
\end{lemma}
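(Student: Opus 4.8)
The plan is to unfold every definition and check that both sides reduce to the same average over the set $\Gamma$ of additive $12$-tuples appearing in Lemma~\ref{almosthom}; as the paper says, this lemma is just a translation of Lemma~\ref{almosthom} into the ``fuzzy function'' language, so the content is purely bookkeeping. First I would expand the convolution $\psi(z_1)*\psi(z_2)$. By the definitions of $\psi$ and of $*$ on $\pi(H)$, grouping the triple summing to $z_1$ and the triple summing to $z_2$ into a $6$-tuple $(x_1,\dots,x_6)$ with $x_1+x_2+x_3=z_1$ and $x_4+x_5+x_6=z_2$, one obtains
\[(\psi(z_1)*\psi(z_2))(h)=\E_{\substack{x_1+x_2+x_3=z_1\\ x_4+x_5+x_6=z_2}}\mu(x_1)\cdots\mu(x_6)\,\mathbf{1}\bigl[\phi(x_1)+\dots+\phi(x_6)=h\bigr],\]
and likewise for $\psi(z_3)*\psi(z_4)$ with a $6$-tuple $(x_7,\dots,x_{12})$. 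Taking the inner product, the sum over $h\in H$ of the product of the two indicators collapses to $\mathbf{1}[\phi(x_1)+\dots+\phi(x_6)=\phi(x_7)+\dots+\phi(x_{12})]$, so
\[\sp{\psi(z_1)*\psi(z_2),\,\psi(z_3)*\psi(z_4)}=\E\Bigl[\mu(x_1)\cdots\mu(x_{12})\,\mathbf{1}\bigl[\phi(x_1)+\dots+\phi(x_6)=\phi(x_7)+\dots+\phi(x_{12})\bigr]\Bigr],\]
the expectation being over $12$-tuples with $x_1+x_2+x_3=z_1$, $x_4+x_5+x_6=z_2$, $x_7+x_8+x_9=z_3$ and $x_{10}+x_{11}+x_{12}=z_4$.

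Next I would average over the quadruples $(z_1,z_2,z_3,z_4)$ with $z_1+z_2=z_3+z_4$. The only thing to check is that choosing such a quadruple uniformly and then a compatible $12$-tuple uniformly (one with the prescribed triple sums) yields the same distribution as choosing $(x_1,\dots,x_{12})$ uniformly from $\Gamma=\{(x_1,\dots,x_{12}):x_1+\dots+x_6=x_7+\dots+x_{12}\}$: the map $(x_1,\dots,x_{12})\mapsto(x_1+x_2+x_3,\,x_4+x_5+x_6,\,x_7+x_8+x_9,\,x_{10}+x_{11}+x_{12})$ is a bijection between $\Gamma$ and the set of (quadruple, compatible $12$-tuple) pairs, both of size $n^{11}$, so the two uniform measures compose with no stray constant. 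Under this identification the constraint $z_1+z_2=z_3+z_4$ becomes membership of $\Gamma$ and the indicator in the last display becomes $\mathbf{1}_{(x_1,\dots,x_{12})\in\Phi}$, so the left-hand side of the lemma equals $\E_{(x_1,\dots,x_{12})\in\Gamma}\mu(x_1)\cdots\mu(x_{12})\,\mathbf{1}_{(x_1,\dots,x_{12})\in\Phi}$, which is at least $1-36\eta$ by Lemma~\ref{almosthom}.

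There is no genuine obstacle here; the points to be careful about are that the convolution and inner product on $\pi(H)$ are the \emph{unnormalised} sums over $H$ (which is exactly what makes the telescoping of averages come out with no extra factors), and that the set $\Gamma$ relevant here is the ``additive $12$-tuple'' set of Lemma~\ref{almosthom}, carrying no non-degeneracy condition, unlike the $\Gamma$ used earlier to define $M$.
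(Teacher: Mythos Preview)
Your proposal is correct and follows essentially the same route as the paper: both expand the inner product and convolutions, substitute the definition of $\psi$, and observe that averaging over additive quadruples $(z_1,z_2,z_3,z_4)$ together with the four triple constraints recovers exactly the expectation over $\Gamma$ appearing in Lemma~\ref{almosthom}. The only cosmetic difference is that the paper first writes the inner product as $\sum_{h_1+h_2=h_3+h_4}\psi(z_1)(h_1)\psi(z_2)(h_2)\psi(z_3)(h_3)\psi(z_4)(h_4)$ before substituting, whereas you collapse the $h$-sums one step earlier; the content is identical.
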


\begin{proof}
The inner product expands to
\[\sum_{h_1+h_2=h_3+h_4}\psi(z_1)(h_1)\psi(z_2)(h_2)\psi(z_3)(h_3)\psi(z_4)(h_4).\]
But $\psi(z_i)(h_i)$ can be written as
\[\E\{\mu(x_{3i-2})\mu(x_{3i-1})\mu(x_{3i}):x_{3i-2}+x_{3i-1}+x_{3i}=z_i,\phi(x_{3i-2})+\phi(x_{3i-1})+\phi(x_{3i})=h_i\}.\]
Therefore, taking the expectation of the inner product over all additive quadruples $z_1+z_2=z_3+z_4$, we obtain
\[\E\{\mu(x_1)\dots\mu(x_{12}):x_1+\dots+x_6=x_7+\dots+x_{12},\phi(x_1)+\dots+\phi(x_6)=\phi(x_7)+\dots+\phi(x_{12})\}\]
which is another way of writing $\E_{(x_1,\dots,x_{12})\in\Gamma}\ \mu(x_1)\dots\mu(x_{12})\mathbf{1}_{(x_1,\dots,x_{12})\in\Phi}$. Thus, the result follows from Lemma \ref{almosthom}.
\end{proof}

\begin{corollary} \label{almosthom3}
$\E_{z_1-z_2=z_3-z_4}\sp{\psi(z_1)*\psi_-(z_2),\psi(z_3)*\psi_-(z_4)}\geq 1-36\eta$.
\end{corollary}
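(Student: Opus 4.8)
The plan is to deduce this from Lemma \ref{almosthom2} by a change of variables that turns the additive quadruple $z_1 - z_2 = z_3 - z_4$ into the additive quadruple appearing in that lemma. The key observation is purely formal: an equation $z_1 - z_2 = z_3 - z_4$ is the same as $z_1 + z_4 = z_3 + z_2$, so averaging over quadruples with $z_1 - z_2 = z_3 - z_4$ is the same as averaging over quadruples $(w_1, w_2, w_3, w_4)$ with $w_1 + w_2 = w_3 + w_4$, via the correspondence $w_1 = z_1$, $w_2 = z_4$, $w_3 = z_3$, $w_4 = z_2$ (this is a bijection on the relevant index set, and it respects the uniform measure). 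Under this correspondence, $\sp{\psi(z_1) * \psi_-(z_2), \psi(z_3) * \psi_-(z_4)}$ should become $\sp{\psi(w_1) * \psi(w_2)', \psi(w_3) * \psi(w_4)'}$ for a suitably dualized version of $\psi$ in the second and fourth slots.

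The step that needs a small amount of care is matching up the inner products on the two sides, since $\psi_-$ has appeared. I would expand $\sp{\psi(z_1) * \psi_-(z_2), \psi(z_3) * \psi_-(z_4)}$ as $\sum_{h_1 - h_2 = h_3 - h_4} \psi(z_1)(h_1) \psi(z_2)(h_2) \psi(z_3)(h_3) \psi(z_4)(h_4)$, using the definition of $*$, $\psi_-$, and $\sp{\cdot,\cdot}$, and then observe that $h_1 - h_2 = h_3 - h_4$ is equivalent to $h_1 + h_4 = h_3 + h_2$. Re-indexing the $H$-sum by the same bijection $h_1 \mapsto h_1$, $h_2 \mapsto h_4$, $h_3 \mapsto h_3$, $h_4 \mapsto h_2$, this sum is literally equal to $\sum_{h_1 + h_2 = h_3 + h_4} \psi(z_1)(h_1)\psi(z_4)(h_2)\psi(z_3)(h_3)\psi(z_2)(h_4) = \sp{\psi(z_1)*\psi(z_4), \psi(z_3)*\psi(z_2)}$. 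Combining the two re-indexings (the one over $G$-quadruples and the one over $H$-quadruples), the left-hand side of the corollary is exactly $\E_{w_1 + w_2 = w_3 + w_4} \sp{\psi(w_1)*\psi(w_2), \psi(w_3)*\psi(w_4)}$, which is the quantity bounded below by $1 - 36\eta$ in Lemma \ref{almosthom2}.

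There is no real obstacle here: the statement is, as the text itself flags for the preceding lemma, essentially a reformulation, and the only thing to check is that the double re-indexing is a genuine measure-preserving bijection and that the $\psi_-$'s land in the right slots. One minor point worth spelling out in the write-up is that the map on $G^4$ restricted to $\{z_1 - z_2 = z_3 - z_4\}$ is a bijection onto $\{w_1 + w_2 = w_3 + w_4\}$ of the same cardinality, so the normalized expectations agree; the same remark applies verbatim to the $H$-sum, which is finite because all the $\psi(x)$ are finitely supported. With those observations in place the proof is a single line of symbol-pushing followed by an appeal to Lemma \ref{almosthom2}.
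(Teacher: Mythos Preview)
Your proposal is correct and follows essentially the same approach as the paper: expand the inner product, substitute to turn the $\psi_-$'s into ordinary $\psi$'s, rewrite $h_1-h_2=h_3-h_4$ as $h_1+h_4=h_3+h_2$, recognize the result as $\sp{\psi(z_1)*\psi(z_4),\psi(z_3)*\psi(z_2)}$, and then observe that the constraint $z_1-z_2=z_3-z_4$ coincides with $z_1+z_4=z_2+z_3$, so the average is exactly the one bounded in Lemma~\ref{almosthom2}. Your write-up is slightly more explicit about the bijection on quadruples, but the argument is the same.
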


\begin{proof}
The inner product expands to
\[\sum_{h_1+h_2=h_3+h_4}\psi(z_1)(h_1)\psi(z_2)(-h_2)\psi(z_3)(h_3)\psi(z_4)(-h_4)\]
which equals
\[\sum_{h_1-h_2=h_3-h_4}\psi(z_1)(h_1)\psi(z_2)(h_2)\psi(z_3)(h_3)\psi(z_4)(h_4)\]
which equals
\[\sum_{h_1+h_4=h_3+h_2}\psi(z_1)(h_1)\psi(z_2)(h_2)\psi(z_3)(h_3)\psi(z_4)(h_4)\]
which equals $\sp{\psi(z_1)*\psi(z_4),\psi(z_2)*\psi(z_3)}$. We are taking the expectation of this quantity over quadruples $(z_1,z_2,z_3,z_4)$ such that $z_1+z_4=z_2+z_3$, so the result follows from Lemma \ref{almosthom2}.
\end{proof}

We are about to define $\theta:G\to\pi(H)$ as the convolution of $\psi$ with $\psi_-^-$, where $\psi_-^-(x)(h)$ is defined to be $\psi(-x)(-h)$. However, we must first say what ``convolution" means here.

\begin{definition}
Let $\phi,\psi:G\to\pi(H)$. The \emph{convolution} $\phi*\psi:G\to\pi(H)$ is defined by the formula
\[(\phi*\psi)(x)=\E_{x_1+x_2=x}\phi(x_1)*\psi(x_2).\]
\end{definition}

Let $\zeta:G\to H$ be a Freiman homomorphism and let $\phi:G\to\pi(H)$ be defined by $\phi(x)(h)=1$ if $h=\zeta(x)$ and 0 otherwise. In that case, $\phi*\phi_-^-(x)(h)=1$ if for every $x_1-x_2=x$ we have $\zeta(x_1)-\zeta(x_2)=h$ and 0 otherwise. In other words, $\phi*\phi_-^-$ is essentially the well-defined function from $G-G$ to $H$ that is induced by the fact that $\zeta$ is a Freiman homomorphism. Lemma \ref{almosthom2} can be thought of as saying that $\psi$ is an \textit{approximate} homomorphism. We therefore expect $\theta=\psi*\psi_-^-$ to be ``approximately well-defined". We shall show that for every $x$ the function $\theta(x)$ is concentrated at some value $\gamma(x)\in H$, and that $\gamma$ is a group homomorphism.

First, we need a lemma that can be thought of as a kind of triangle inequality, with $1-\sp{p,q}$ being the ``distance" between $p$ and $q$. This distance is similar to the Ruzsa distance between two sets: in particular, a function need not be close to itself.

\begin{lemma} \label{triangleineq}
Let $p$, $q$ and $r$ be elements of $\pi(H)$, each of which sums to at most $1+\beta$, where $\beta \leq 1$. Then 
\[1-\sp{p,r}\leq 1-\sp{p,q}+1-\sp{q,r}+3\beta.\]
\end{lemma}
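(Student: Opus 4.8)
The plan is to prove the inequality by a Cauchy–Schwarz type estimate, exploiting the fact that if $p$, $q$, $r$ are close to probability distributions then being ``close" in the sense that $\sp{p,q}$ is near $1$ forces both functions to concentrate near a single point of $H$. Concretely, I would first normalise: since each of $p$, $q$, $r$ sums to a value in $[0, 1+\beta]$, and since we only ever compare inner products to $1$, the error terms introduced by the non-normalisation will be controlled by $\beta$. I would write $p = \sum_h p(h)\delta_h$ and similarly for the others, and think of $\sp{p,q} = \sum_h p(h) q(h)$ as an overlap.

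The key step is the following. Suppose $\sp{p,q} \geq 1 - s$ and $\sp{q,r} \geq 1-t$, where we may assume $s, t \leq 1$ (else the claimed inequality is trivial since its right-hand side is at least $1$). First I would show $q$ is concentrated: because $\sum_h q(h) \leq 1+\beta$ while $\sum_h p(h)q(h) \geq 1-s$ and $p(h) \leq \sum_{h'} p(h') \leq 1 + \beta$ for all $h$ (each $p(h)$ is at most the total mass), there must be significant mass of $q$ where $p$ is large, and combining with $\sum_h p(h) \leq 1+\beta$ one gets that $q$ puts mass at least roughly $1 - s - O(\beta)$ on a single element $h^*$ — more carefully, $\sum_{h \neq h^*} q(h)$ is small. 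The cleanest route is probably to note $\sp{p,q} \leq \|p\|_\infty \sum_h q(h) \leq (1+\beta)^2$ is the wrong direction, so instead I would argue directly: let $h^*$ maximise $q$; then $1 - s \leq \sp{p,q} = p(h^*)q(h^*) + \sum_{h\neq h^*}p(h)q(h) \leq p(h^*)q(h^*) + q(h^*)\sum_{h \neq h^*} p(h)$ is again not immediately what I want. The genuinely robust statement is: $1 - \sp{p,q} \geq$ something like $\tfrac12\sum_h (p(h) - q(h))^2$ up to $\beta$-corrections when both are near-distributions, i.e.\ $\sp{p,q}$ close to $1$ implies $p$ and $q$ are close in $\ell^2$ \emph{and} each is close to a point mass. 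Then the triangle inequality for $\|\cdot\|_2$ gives $\|p - r\|_2 \leq \|p-q\|_2 + \|q-r\|_2$, and converting back, $1 - \sp{p,r} \leq 1 - \sp{p,q} + 1 - \sp{q,r} + 3\beta$, with the $3\beta$ absorbing all the normalisation slack (one $\beta$ from each of $p,q,r$ summing to possibly more than $1$).

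The main obstacle I anticipate is making the ``concentration" bookkeeping clean in the presence of the $\beta$-slack, since $p$, $q$, $r$ are not exactly probability distributions and a function ``need not be close to itself" (as the paper warns). The safest execution is to avoid choosing a distinguished point $h^*$ at all and instead work with the identity $1 - \sp{p,q} = \tfrac12\big(\textstyle\sum_h p(h)^2 + \sum_h q(h)^2\big) - \tfrac12 \sum_h(p(h)-q(h))^2 + \big(1 - \tfrac12\sum p(h)^2 - \tfrac12\sum q(h)^2\big)$; using $\sum_h p(h)^2 \leq \big(\sum_h p(h)\big)^2 \leq (1+\beta)^2 \leq 1 + 3\beta$ and similarly for $q$, one bounds $1 - \sp{p,q} \geq \tfrac12\sum_h (p(h)-q(h))^2 - 3\beta$ and conversely $1 - \sp{p,q} \leq$ (something involving $\sum_h(p(h)-q(h))^2$ plus the defect of the masses from $1$, which is at most $\beta$ since $p,q \geq 0$). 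Then I would apply the triangle inequality $\big(\sum_h(p(h)-r(h))^2\big)^{1/2} \leq \big(\sum_h(p(h)-q(h))^2\big)^{1/2} + \big(\sum_h(q(h)-r(h))^2\big)^{1/2}$, square, use $(a+b)^2 \leq 2a^2 + 2b^2$ is too lossy — instead keep it as is and feed the bounds in — and collect the $\beta$-terms to land exactly on $3\beta$. The arithmetic of tracking constants to get exactly $3\beta$ rather than, say, $6\beta$ is the fiddly part, but the structure is entirely standard.
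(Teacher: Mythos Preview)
Your $\ell^2$ approach has a genuine gap. The plan requires converting between $d(p,q)=1-\sp{p,q}$ and $\tfrac12\|p-q\|_2^2$ in both directions with only $O(\beta)$ slack. One direction is fine: since $\|p\|_2^2\leq(1+\beta)^2\leq 1+3\beta$, one has
\[
\tfrac12\|p-q\|_2^2-d(p,q)=\tfrac12\bigl(\|p\|_2^2+\|q\|_2^2\bigr)-1\leq 3\beta.
\]
The other direction, however, is simply false. From the same identity,
\[
d(p,r)-\tfrac12\|p-r\|_2^2=1-\tfrac12\bigl(\|p\|_2^2+\|r\|_2^2\bigr),
\]
and nothing in the hypotheses bounds $\|p\|_2^2$ or $\|r\|_2^2$ from \emph{below}. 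Take $\beta=0$, $H=\{0,1\}$, $p=r=(\tfrac12,\tfrac12)$ and $q=(1,0)$: then $d(p,r)=\tfrac12$ while $\|p-r\|_2=0$. The ``defect of the masses from $1$'' that you invoke controls $\bigl|\sum_h p(h)-1\bigr|$, not $1-\sum_h p(h)^2$, and for spread-out $p$ these are entirely different. The paper's warning that ``a function need not be close to itself'' is precisely the statement that $d$ is not two-sidedly comparable to any genuine metric.

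The paper's proof avoids all of this with a one-line pointwise inequality: since $0\leq p(h),r(h)\leq 1+\beta$, expand $(1+\beta-p(h))(1+\beta-r(h))\geq 0$ to get $p(h)r(h)\geq (1+\beta)(p(h)+r(h))-(1+\beta)^2$; then use $q(h)\leq 1+\beta$ to insert $q$ and sum over $h$. This yields $\sp{p,q}+\sp{q,r}-\sp{p,r}\leq (1+\beta)^2\leq 1+3\beta$ directly, with no concentration or $\ell^2$ argument needed at all.
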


\begin{proof}
For every $h\in H$ we have the inequality $(1+\beta-p(h))(1+\beta-r(h))\geq 0$, since $p$ and $r$ take values in $[0,1+\beta]$. It follows that $p(h)r(h)\geq (1+\beta)(p(h)+r(h))-(1+\beta)^2$. Therefore, since $q$ also takes values in $[0,1+\beta]$,
\begin{eqnarray*}
1-\sp{p,r}&=&1-\sum_hp(h)r(h)\\
&\leq& 1-(1+\beta)^{-1}\sum_h q(h)p(h)r(h)\\
&\leq& 1-\sum_h q(h)(p(h)+r(h)-(1+\beta))\\
&=&1-\sp{p,q}-\sp{q,r}+ (1 + \beta) \sum_h q(h)\\
&\leq& 1- \sp{p,q} + 1 - \sp{q,r} + 3 \beta,
\end{eqnarray*}
as claimed.
\end{proof}

Let us write $d(p,q)$ for $1-\sp{p,q}$. Then Lemma \ref{triangleineq} tells us that $d(p,r)\leq d(p,q)+d(q,r)+\beta$. Note that $d(p,q)$ can be negative, but it cannot be smaller than $-2\beta-\beta^2$. The fact that it can be negative turns out not to matter. 

Now we need an estimate that can be used to give us a $\beta$ to use in the previous lemma.

\begin{lemma} \label{almostconstant}
$\|\mu*\mu*\mu-1\|_\infty\leq\eta$. 
\end{lemma}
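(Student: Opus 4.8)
The plan is to reduce the statement to a concentration estimate at a single point and finish by a union bound, following the pattern of Lemma~\ref{tripleconvolution}. Writing $\mu=p^{-1}\chi_U$ with $p=C(\log n)^{1/3}n^{-2/3}$, one has $\mu*\mu*\mu(x)=p^{-3}n^{-2}N_3(x)$, where $N_3(x)$ is the number of ordered triples $(u_1,u_2,u_3)\in U^3$ with $u_1+u_2+u_3=x$ (there being exactly $n^2$ triples summing to $x$ in all). So it suffices to show that for each fixed $x$, $N_3(x)$ lies within $\eta p^3n^2=\eta C^3\log n$ of $p^3n^2=C^3\log n$ with probability $1-o(n^{-1})$, and then to sum over the $n$ values of $x$. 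The one genuinely new feature relative to Lemma~\ref{tripleconvolution} is that all three factors are the same random set, so one cannot fix two of them and run Bernstein over the third; I would instead argue by Poisson-type concentration.

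First I would peel off the degenerate triples, in which two or three coordinates coincide. A triple with $u_1=u_2$ contributes to the count of representations of $x$ as $2u+w$ with $u,w\in U$, and by a calculation of exactly the shape of Lemma~\ref{doubleconvolution} — bounding the expected number of $s$-tuples of such representations with no coincidences among the $2s$ elements involved by $n^sp^{2s}$, and noting $n^{s+1}p^{2s}=o(n^{-1})$ already for $s=4$ — this count is $O(1)$ uniformly in $x$ with probability $1-o(n^{-1})$; triples with all three coordinates equal contribute at most $|G[3]|$. Provided $G$ has bounded $2$- and $3$-torsion (automatic for $\Z_N$), the degenerate part is therefore $O(1)$, hence $o(1)$ after multiplication by $p^{-3}n^{-2}$, and up to negligible error $\mu*\mu*\mu(x)=p^{-3}n^{-2}N_3^{\mathrm{nd}}(x)$ with $N_3^{\mathrm{nd}}(x)$ counting only triples with distinct coordinates.

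For $N_3^{\mathrm{nd}}(x)$ the mean is $(1-o(1))p^3n^2=(1-o(1))C^3\log n$, logarithmically large, and since $U$ lies well below the $n^{-1/2}$ threshold the triples through $x$ are nearly independent: two of them sharing two coordinates must in fact share all three, and the expected number of pairs sharing exactly one coordinate is at most a constant times $n^3p^5=C^5n^{-1/3}(\log n)^{5/3}$, which is polynomially small in $n$. Thus the clustering parameter $\Delta$ in Janson's inequality is polynomially small — in particular far smaller than $\E N_3^{\mathrm{nd}}(x)$ — and the factorial moments satisfy $\E[(N_3^{\mathrm{nd}}(x))_j]\le(1+o(1))(C^3\log n)^j$ for all $j$ up to a power of $\log n$, so $N_3^{\mathrm{nd}}(x)$ is sandwiched between Poisson laws of mean $(1\pm o(1))C^3\log n$. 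The standard Poisson tail estimate then yields $\P\bigl[\,|N_3^{\mathrm{nd}}(x)-\E N_3^{\mathrm{nd}}(x)|>\eta\E N_3^{\mathrm{nd}}(x)\,\bigr]\le n^{-\Omega_\eta(C^3)}$, which is $o(n^{-1})$ once $C$ is large enough in terms of $\eta$; the union bound over $x$ then finishes the proof. The part I expect to require the most care is the upper tail, since upper tails for additive counts in random sets are delicate in general; what rescues us is that the extreme sparsity makes $\Delta$ not merely $o(\E N_3^{\mathrm{nd}}(x))$ but polynomially small, and this, together with the mean being of order $\log n$ — precisely the gain from the $(\log n)^{1/3}$ factor in $p$ — is what makes a clean Poisson-type bound available and strong enough to survive a union bound over all $n$ points.
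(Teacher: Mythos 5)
Your argument is correct, and its skeleton --- pointwise concentration at each fixed $x$ followed by a union bound, with degenerate triples peeled off first and Janson's inequality supplying the lower tail --- is the same as the paper's. The genuine difference is in the upper tail, which is indeed the delicate step. The paper follows the Janson--Ruci\'nski ``infamous upper tail'' device: it compares the count $S$ of triples through $x$ with the maximum number $S'$ of \emph{pairwise disjoint} such triples, proves $S\leq S'+28$ with high probability from purely local structural facts about the intersection graph (maximum degree at most $4$, induced matchings of size at most $3$), and then invokes the ready-made bound $\P[S'\geq \E S+t]\leq\exp(-t^2/2(\E S+t/3))$ for disjoint counts. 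You instead run the moment method, bounding the factorial moments by $(1+o(1))(C^3\log n)^j$ for $j$ up to order $\log n$ and applying $\P[N\geq m]\leq\E\binom{N}{j}/\binom{m}{j}$ with $j\approx\eta C^3\log n/2$. Both routes exploit the same underlying fact, namely that the overlap parameter $n^3p^5=n^{-1/3+o(1)}$ is polynomially small, and both deliver a failure probability $n^{-\Omega_\eta(C^3)}$ that survives the union bound once $C$ is large; yours avoids citing the external concentration lemma for disjoint counts but shifts the burden onto the factorial-moment computation, where one must check that connected overlap configurations of each size $c\geq 2$ remain negligible for $j=\Theta(\log n)$ --- this works because such a component costs a factor of roughly $(np^2)^{c-1}=n^{-(c-1)/3+o(1)}$, which dominates the $c^{O(c)}$ combinatorial multiplicity in this range of $c$. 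Two small remarks. First, your claim that $s=4$ already gives failure probability $o(n^{-1})$ for the degenerate count is off ($n^{s+1}p^{2s}=n^{-1/3+o(1)}$ when $s=4$, which is only $o(1)$); but since that bound is a single global event rather than one event per $x$, $o(1)$ suffices, or one can take $s=7$. Second, your explicit bounded-torsion hypothesis is in fact \emph{more} careful than the paper, whose assertions $|\mathcal{S}_2|\leq 3$ and $|\mathcal{S}_1|\leq 1$ likewise quietly require that representations of the form $2u+w=x$ do not proliferate, as they would in a group with large $2$-torsion.
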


\begin{proof}
It will suffice to show that for any fixed $x$, the number of distinct ways $S$ of writing $x$ as a sum of three elements in $U$ satisfies $|S - \E S| \leq \eta p^3 n^2$ with probability $1 - o(1/n)$. A straightforward application of Janson's inequality (see~\cite{AS}) gives the required estimate for the probability that $S < \E S - \eta p^3 n^2$. We will therefore focus on estimating the probability that $S > \E S + \eta p^3 n^2$.

We will use the method described in Section 2.3.4 of~\cite{JR02}. Let $S'$ be the random variable counting the maximum number of disjoint three element sets each of which sum to $x$. We claim that $S \leq S' + 28$ with probability $1 - o(1/n)$. To prove the claim, let $\mathcal{S}_i$ be the collection of $i$ element subsets of $U$ giving rise to a triple summing to $x$. Form a graph $J$ whose vertices are the elements of $\mathcal{S}_3$, where two elements are joined if and only if they intersect. Then, with probability $1 - o(1/n)$, it is straightforward to verify that the maximum degree of $J$ is at most $4$ and the largest induced matching has size at most $3$. Note that $S'$ is the order of the largest independent set in $J$. Since at most $24$ vertices are joined to a maximal induced matching and the remaining set is independent, we have
\[S' \geq |J| - 24 = |\mathcal{S}_3| - 24.\]
Since it is also easy to verify that $|\mathcal{S}_2| \leq 3$ with probability $1 - o(1/n)$ and $\mathcal{S}_1 \leq 1$ (unless $G$ has characteristic $3$ and $x = 0$), the claim follows.


The required conclusion now follows from the inequality
\[\P[S' \geq \E S + t] \leq \exp\left(\frac{-t^2}{2(\E S + t/3)}\right),\]
which is Lemma 2 of~\cite{JR02}.
\end{proof}

\noindent \textbf{Remark.} It is possible to prove the above lemma in a slightly more elementary way, by deducing it from Lemma \ref{tripleconvolution}. To do this, one must split $\mu$ into several independent parts and use the fact that most of the terms that arise involve different parts. (We used a similar idea at the end of the previous section.)
\medskip

There is a simple way of measuring the well-definedness of $\theta$: we look at how small the distances $d(\theta(x),\theta(x))$ are.

\begin{lemma}\label{welldefined}
For every $x\in G$, $d(\theta(x),\theta(x))\leq 75\eta$.
\end{lemma}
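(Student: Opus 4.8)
The plan is to reduce the statement to Lemma~\ref{almosthom2} by a ``generic pivot'' argument. First I would unravel the definitions: $\theta=\psi*\psi_-^-$ means $\theta(x)=\E_w \psi(w)*\psi_-(w-x)$, so $d(\theta(x),\theta(x))=1-\E_{w,w'}\sp{\psi(w)*\psi_-(w-x),\psi(w')*\psi_-(w'-x)}$. Using the rearrangement identity from the proof of Corollary~\ref{almosthom3}, namely $\sp{\psi(z_1)*\psi_-(z_2),\psi(z_3)*\psi_-(z_4)}=\sp{\psi(z_1)*\psi(z_4),\psi(z_2)*\psi(z_3)}$, and writing $a=w$, $b=w'-x$, $c=w-x$, $d=w'$, one checks that $(a,b,c,d)$ ranges (as $w,w'$ vary over $G$) over precisely the additive quadruples $a+b=c+d$ with $a-c=x$, and that $d(\theta(x),\theta(x))=\E\, d\bigl(\psi(a)*\psi(b),\psi(c)*\psi(d)\bigr)$, the expectation being over this ``slice'' of additive quadruples. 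By Lemma~\ref{almosthom2}, the same average taken over \emph{all} additive quadruples is at most $36\eta$, so everything comes down to removing the restriction $a-c=x$ at a cost of $O(\eta)$, uniformly in $x$.

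The key step is the pivot. For each quadruple $(a,b,c,d)$ in the slice and each $e\in G$, set $f=a+b-e$, so that $e+f$ equals the common value $a+b=c+d$. Applying the triangle inequality of Lemma~\ref{triangleineq} to the three distributions $\psi(a)*\psi(b)$, $\psi(e)*\psi(f)$, $\psi(c)*\psi(d)$ bounds $d\bigl(\psi(a)*\psi(b),\psi(c)*\psi(d)\bigr)$ by $d\bigl(\psi(a)*\psi(b),\psi(e)*\psi(f)\bigr)+d\bigl(\psi(e)*\psi(f),\psi(c)*\psi(d)\bigr)$ plus an error term that is $O(\eta)$, since all three distributions have total mass within $O(\eta)$ of $1$ by Lemma~\ref{almostconstant} (each has mass $(\mu*\mu*\mu)(\cdot)\,(\mu*\mu*\mu)(\cdot)$). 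Now average this inequality over $e$ as well as over the slice; parametrising the slice by $(a,d)\in G^2$ (with $c=a-x$, $b=d-x$), this is an average over $(a,d,e)\in G^3$. The crucial point is that under this averaging the quadruple $(a,b,e,f)$ appearing in the first term ranges \emph{uniformly over all additive quadruples} — because $a$, $b=d-x$ and $e$ now run freely over $G$ while $f$ is determined — and likewise $(e,f,c,d)$ in the second term. Hence by Lemma~\ref{almosthom2} each of the two averaged terms is at most $36\eta$, giving $d(\theta(x),\theta(x))\le 72\eta+O(\eta)$; keeping track of the lower-order errors (they are governed by $\|\mu*\mu*\mu-1\|_\infty$ via Lemma~\ref{almostconstant}) yields the claimed bound $75\eta$.

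The main obstacle is choosing the pivot correctly. Pivoting through $\theta(x)$ itself — that is, through $\psi(v)*\psi_-(v-x)$ averaged over $v$ — is useless, since it only produces the tautology $d(\theta(x),\theta(x))\le 2d(\theta(x),\theta(x))+O(\eta)$. On the other hand, an inner product $\sp{\psi(a)*\psi(b),\psi(e)*\psi(f)}$ is controlled by Lemma~\ref{almosthom2} \emph{only} when $e+f=a+b$. The resolution is to pivot through a pair $(e,f)$ whose \emph{sum} is pinned to $a+b$ but whose \emph{difference} is left completely free: this simultaneously keeps both halves within the scope of Lemma~\ref{almosthom2} and dissolves the slice $a-c=x$ into the full family of additive quadruples. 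Everything else — the rearrangement identities, the total-mass estimates from Lemma~\ref{almostconstant} feeding the triangle-inequality error, and the final arithmetic — is routine.
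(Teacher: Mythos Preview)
Your argument is correct and is essentially the paper's own proof, reordered. The paper works directly with the $\psi*\psi_-$ inner products via Corollary~\ref{almosthom3}, pivots through $q=\psi(w_3)*\psi_-(w_4)$ (averaging over $w_3,w_4$ with $w_3-w_4=w_1-w_2$) using Lemma~\ref{triangleineq}, and only at the very end applies the rearrangement identity to recognise the resulting expression as $\sp{\theta(x),\theta(x)}$; you apply the rearrangement first to pass to the $\psi*\psi$ form and then pivot through $\psi(e)*\psi(f)$ with $e+f=a+b$, invoking Lemma~\ref{almosthom2} directly. Since Corollary~\ref{almosthom3} is itself derived from Lemma~\ref{almosthom2} by the same rearrangement, the two arguments are the same up to this cosmetic reordering, and the arithmetic $36\eta+36\eta+3\beta\le 75\eta$ matches.
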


\begin{proof}
Fix $x\in G$. We need to show that
\[\E_{z_1-z_2=z_3-z_4=x}\sp{\psi(z_1)*\psi_-(z_2),\psi(z_3)*\psi_-(z_4)}\geq 1-75\eta.\]
From Corollary \ref{almosthom3}, we know both that
\[\E_{w_1-w_2=w_3-w_4}(1-\sp{\psi(w_1)*\psi_-(w_2),\psi(w_3)*\psi_-(w_4)})\leq 36\eta\]
and that
\[\E_{w_1-w_2=w_3-w_4}(1-\sp{\psi(w_1+x)*\psi_-(w_2+x),\psi(w_3)*\psi_-(w_4)})\leq 36\eta.\]
Now $\sum_{h\in H}\psi(x)(h)=\mu*\mu*\mu(x)$, which is at most $1+\eta$, by Lemma \ref{almostconstant}. It follows from Lemma \ref{triangleineq} that
\[\E_{w_1,w_2}(1-\sp{\psi(w_1)*\psi_-(w_2),\psi(w_1+x)*\psi_-(w_2+x)})\leq 75\eta.\]
But 
\[\sp{\psi(w_1)*\psi_-(w_2),\psi(w_1+x)*\psi_-(w_2+x)}=\sp{\psi(w_1+x)*\psi_-(w_1),\psi(w_2+x)*\psi_-(w_2)}.\]
Therefore,
\[\E_{w_1,w_2}\sp{\psi(w_1+x)*\psi_-(w_1),\psi(w_2+x)*\psi_-(w_2)}\geq 1-75\eta,\]
which is (a slightly rewritten version of) what we needed to show.
\end{proof}

\begin{corollary}\label{thetaconcentrated}
For every $x$, there exists $h$ such that $\theta(x)(h)\geq 1-77\eta$.
\end{corollary}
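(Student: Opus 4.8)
The plan is to deduce the concentration of $\theta(x)$ from the lower bound on $\sp{\theta(x),\theta(x)}$ supplied by Lemma~\ref{welldefined}, combined with an upper bound on the total mass $\sum_h\theta(x)(h)$ coming from Lemma~\ref{almostconstant}. The elementary fact that drives everything is that for any non-negative function $p$ on $H$ one has $\sum_h p(h)^2\le\bigl(\max_h p(h)\bigr)\sum_h p(h)$, so that a lower bound on $\sum_h p(h)^2$ together with an upper bound on $\sum_h p(h)$ forces $\max_h p(h)$ to be close to $1$.

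First I would record that Lemma~\ref{welldefined} says exactly $\sum_h\theta(x)(h)^2=\sp{\theta(x),\theta(x)}=1-d(\theta(x),\theta(x))\ge 1-75\eta$. Next I would compute the total mass of $\theta(x)$. Unwinding the definitions — $\theta=\psi*\psi_-^-$ with the convolution of maps $G\to\pi(H)$, then the convolution inside $\pi(H)$ — and using that $\sum_h\psi(y)(h)=(\mu*\mu*\mu)(y)$ for every $y\in G$ (and likewise $\sum_h\psi_-^-(y)(h)=(\mu*\mu*\mu)(-y)$), one obtains
\[\sum_h\theta(x)(h)=\E_{x_1+x_2=x}(\mu*\mu*\mu)(x_1)\,(\mu*\mu*\mu)(-x_2).\]
By Lemma~\ref{almostconstant} each factor lies in $[1-\eta,1+\eta]$, so $\sum_h\theta(x)(h)\le(1+\eta)^2$.

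Finally I would combine the two bounds. Letting $h^\ast$ be a value maximising $\theta(x)$ and writing $a=\theta(x)(h^\ast)$, the displayed identity and the elementary fact give
\[1-75\eta\le\sum_h\theta(x)(h)^2\le a\sum_h\theta(x)(h)\le a(1+\eta)^2,\]
whence $a\ge(1-75\eta)/(1+\eta)^2$. Since $(1+\eta)^{-2}\ge 1-2\eta$, this is at least $(1-75\eta)(1-2\eta)\ge 1-77\eta$, which is the claim. There is no serious obstacle here; the only point needing care is the constant bookkeeping — one must bound the total mass from above by $(1+\eta)^2$ (not merely observe that it is close to $1$, nor forget that $\theta(x)$ need not be a genuine probability distribution) and use $(1+\eta)^{-2}\ge 1-2\eta$ rather than a cruder estimate, so that the $75\eta$ of Lemma~\ref{welldefined} turns into exactly $77\eta$.
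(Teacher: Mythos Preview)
Your proof is correct and follows essentially the same approach as the paper's: both use Lemma~\ref{welldefined} to get $\sum_h\theta(x)(h)^2\geq 1-75\eta$, bound the total mass $\sum_h\theta(x)(h)$ by $(1+\eta)^2$ via Lemma~\ref{almostconstant}, and then apply the elementary inequality $\max_h p(h)\geq\sum_h p(h)^2/\sum_h p(h)$ together with $(1+\eta)^{-2}\geq 1-2\eta$. Your version spells out the computation of the total mass a little more explicitly, but the argument is the same.
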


\begin{proof}
We know that $\sum_h\theta(x)(h)^2\geq 1-75\eta$. Also, since $\sum_h\psi(y)(h)\leq 1+\eta$ for every $y$, $\sum_h\theta(x)(h)\leq\E_{x_1-x_2=x}\sum_{h_1, h_2}\psi(x_1)(h_1)\psi(x_2)(-h_2)$ is at most $(1+\eta)^2$. It follows that $\max_h\theta(x)(h)\geq(1-75\eta)(1+\eta)^{-2}\geq 1-77\eta$, as claimed.
\end{proof}

\begin{corollary}\label{thetaofnought}
$\theta(0)(0)\geq 1-77\eta$.
\end{corollary}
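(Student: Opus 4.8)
The plan is to deduce the statement from Corollary~\ref{thetaconcentrated} together with the elementary fact that $\theta(0)$, being a translate-averaged \emph{autocorrelation} of $\psi$, takes its largest value at $0$. Applying Corollary~\ref{thetaconcentrated} with $x=0$ gives some $h\in H$ with $\theta(0)(h)\geq 1-77\eta$, so it suffices to prove that $\theta(0)(0)\geq\theta(0)(h)$ for every $h\in H$.

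To establish this, I would unwind the various convolutions. Since $\theta=\psi*\psi_-^-$ and the constraint $x_1+x_2=0$ forces $x_2=-x_1$, we have $\theta(0)(h)=\E_x\bigl(\psi(x)*\psi_-^-(-x)\bigr)(h)$; and since $\psi_-^-(-x)(k)=\psi(x)(-k)$ by definition, the inner convolution on $\pi(H)$ works out to
\[\bigl(\psi(x)*\psi_-^-(-x)\bigr)(h)=\sum_{h_1+h_2=h}\psi(x)(h_1)\psi(x)(-h_2)=\sum_{h_1}\psi(x)(h_1)\psi(x)(h_1-h).\]
In particular, taking $h=0$ gives $\bigl(\psi(x)*\psi_-^-(-x)\bigr)(0)=\sum_{h_1}\psi(x)(h_1)^2$. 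The functions $\psi(x)$ are non-negative and finitely supported, so for each fixed $x$ the Cauchy--Schwarz inequality yields
\[\sum_{h_1}\psi(x)(h_1)\psi(x)(h_1-h)\leq\Bigl(\sum_{h_1}\psi(x)(h_1)^2\Bigr)^{1/2}\Bigl(\sum_{h_1}\psi(x)(h_1-h)^2\Bigr)^{1/2}=\sum_{h_1}\psi(x)(h_1)^2.\]
Taking the expectation over $x$ gives $\theta(0)(h)\leq\theta(0)(0)$ for every $h$, and combining this with the choice of $h$ above completes the argument.

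The only point that needs care is the bookkeeping: one must correctly compose the several operations in play — the convolution on $\pi(H)$, the convolution of $\pi(H)$-valued functions on $G$, and the reflections $p\mapsto p_-$ and $\psi\mapsto\psi_-^-$ — in order to reach the clean identity $\theta(0)(h)=\E_x\sum_{h_1}\psi(x)(h_1)\psi(x)(h_1-h)$. Once this is in hand, the inequality $\theta(0)(0)=\max_h\theta(0)(h)$ is immediate from Cauchy--Schwarz, so there is no substantial obstacle. (Alternatively, one could bypass Corollary~\ref{thetaconcentrated} altogether by expanding $\theta(0)(0)$ directly, via the same Parseval computation used in the proof of Lemma~\ref{almosthom}, into $\E_\chi\|\hf_\chi\|_6^6$, and then invoking the bound $\|\hf_\chi\|_\infty\geq 1-3\eta$ established inside the proof of Lemma~\ref{ell12estimate}; but the route through Corollary~\ref{thetaconcentrated} is shorter.)
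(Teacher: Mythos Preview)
Your proof is correct. Both arguments start from Corollary~\ref{thetaconcentrated} and then show that the value of $h$ at which $\theta(0)$ concentrates must be $0$; the difference is in how that last step is carried out. The paper argues that for $h\ne 0$, each term $\psi(x)(h_1)\psi(x)(h_2)$ in $\sum_{h_1-h_2=h}\psi(x)(h_1)\psi(x)(h_2)$ has $h_1\ne h_2$, so at least one factor misses the maximiser $h'$ of $\psi(x)$; this yields the quantitative bound $\theta(0)(h)\leq(1+\eta)^2/2$, which is below $1-77\eta$ for small $\eta$. Your route is cleaner: you observe that $\theta(0)(h)=\E_x\sum_{h_1}\psi(x)(h_1)\psi(x)(h_1-h)$ is an averaged autocorrelation, and Cauchy--Schwarz gives $\theta(0)(h)\leq\theta(0)(0)$ directly, with no need for a case analysis or a smallness assumption on $\eta$. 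The paper's argument yields more (an absolute upper bound on the off-peak values of $\theta(0)$), but that extra information is never used.
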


\begin{proof}
For every $h$, 
\[\theta(0)(h)=\E_x\sum_{h_1-h_2=h}\psi(x)(h_1)\psi(x)(h_2).\]
Let $h'$ be such that $\psi(x)(h')$ is the largest value over all $h$ of $\psi(x)(h)$.
Then, if $h \neq 0$, 
\[\sum_{h_1-h_2=h}\psi(x)(h_1)\psi(x)(h_2) \leq 2 \psi(x)(h') \sum_{h_1 \neq h'} \psi(x)(h_1) \leq \frac{(1 + \eta)^2}{2},\]
where we used that $\psi(x)(h') \sum_{h_1 \neq h'} \psi(x)(h_1)$ is bounded by an expression of the form $x((1+ \eta) - x) \leq (1+ \eta)^2/4$.
Therefore, if $h\ne 0$, $\theta(0)(h)\leq (1+\eta)^2/2$. Since this is less than $1-77\eta$ for $\eta$ sufficiently small, the only way that Corollary \ref{thetaconcentrated} can be true is if $\theta(0)(0)\geq 1-77\eta$.
\end{proof}

\begin{lemma}\label{closefunctions}
Let $\beta,\gamma\geq 0$ and let $p,q\in\pi(H)$ with $\sum_hp(h)$ and $\sum_hq(h)$ at most $1+\beta$. Suppose that $d(p,q)\leq\gamma$. Then there exists $h$ such that $p(h)q(h)\geq(1-\beta-\gamma)^2$.
\end{lemma}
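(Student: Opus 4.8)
The plan is to extract from the hypothesis $\sp{p,q}=\sum_{h\in H}p(h)q(h)\ge 1-\gamma$ a single term $p(h)q(h)$ that carries almost all of the mass, by a Cauchy--Schwarz argument that exploits the bounds $\sum_h p(h),\sum_h q(h)\le 1+\beta$. The main intended use is with $p=q$ equal to some $\theta(x)$, so that the conclusion upgrades a bound on $d(\theta(x),\theta(x))$ to concentration of $\theta(x)$ at a single element. We may assume $1-\beta-\gamma\ge 0$; in our applications $\beta$ and $\gamma$ are both small, so this is automatic.

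The key inequality is obtained as follows. For each $h$ write $p(h)q(h)=\sqrt{p(h)q(h)}\cdot\sqrt{p(h)}\,\sqrt{q(h)}$ and bound the first factor by $\max_{h'}\sqrt{p(h')q(h')}$. This gives
\[\sp{p,q}\ \le\ \Bigl(\max_{h}\sqrt{p(h)q(h)}\Bigr)\sum_{h}\sqrt{p(h)}\,\sqrt{q(h)},\]
and Cauchy--Schwarz bounds the remaining sum by $\bigl(\sum_h p(h)\bigr)^{1/2}\bigl(\sum_h q(h)\bigr)^{1/2}\le 1+\beta$. Hence
\[1-\gamma\ \le\ \sp{p,q}\ \le\ (1+\beta)\Bigl(\max_h p(h)q(h)\Bigr)^{1/2},\]
so $\max_h p(h)q(h)\ge (1-\gamma)^2/(1+\beta)^2$. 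It then remains to invoke the elementary estimate $(1-\gamma)/(1+\beta)\ge 1-\beta-\gamma$, which rearranges to $\beta^2+\beta\gamma\ge 0$; this shows $(1-\gamma)^2/(1+\beta)^2\ge(1-\beta-\gamma)^2$, and taking $h$ to be a maximizer of $p(h)q(h)$ finishes the proof.

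There is no real obstacle here — the argument is essentially two lines once it is set up. The only point requiring care is the sharpness of the constant: a cruder approach, such as splitting $H$ according to whether $p(h)\le q(h)$ and bounding the two parts separately, loses a factor of $2$ and is too weak, whereas routing the estimate through Cauchy--Schwarz as above delivers exactly the bound $(1-\beta-\gamma)^2$ that the subsequent argument (concentration of $\theta(x)$, en route to showing the induced map $\gamma$ is a homomorphism) requires.
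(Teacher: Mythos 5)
Your proof is correct and is essentially the paper's own argument: the same factorization $\sp{p,q}\leq\bigl(\max_h\sqrt{p(h)q(h)}\bigr)\sum_h\sqrt{p(h)q(h)}$, the same Cauchy--Schwarz bound of $1+\beta$ on the sum, and the same algebraic identity $(1-\beta-\gamma)(1+\beta)\leq 1-\gamma$. The only difference is that the paper phrases it as a proof by contradiction while you argue directly (and you make explicit the harmless assumption $1-\beta-\gamma\geq 0$, which the paper leaves implicit).
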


\begin{proof}
We know that 
\[\sum_hp(h)q(h)\leq(\max_hp(h)^{1/2}q(h)^{1/2})\sum_hp(h)^{1/2}q(h)^{1/2}.\]
If the result is false, then $\max_hp(h)^{1/2}q(h)^{1/2}$ is less than $1-\beta-\gamma$, and, by the Cauchy--Schwarz inequality and the assumptions on $p$ and $q$, the inner sum is at most $1+\beta$. It follows that $\sp{p,q}<(1-\beta-\gamma)(1+\beta)\leq 1-\gamma$, and therefore that $d(p,q)>\gamma$, a contradiction.
\end{proof}

The next lemma tells us that inner products are ``approximately Lipschitz" functions of their arguments.

\begin{lemma}\label{continuity}
Let $0\leq\beta\leq 1/5$ and let $p,q,r\in\pi(H)$ be such that $\sum_hp(h)$, $\sum_hq(h)$ and $\sum_hr(h)$ are all at most $1+\beta$. Then $|\sp{p,r}-\sp{q,r}|\leq 5d(p,q)+10\beta$.
\end{lemma}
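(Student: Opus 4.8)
The plan is to exploit the concentration already packaged in Lemma~\ref{closefunctions}: when $d(p,q)$ is small, both $p$ and $q$ put almost all of their mass on one common point $h_0\in H$, and away from $h_0$ the difference $\sp{p,r}-\sp{q,r}=\sp{p-q,r}$ cannot move much because $p-q$ is small there. So first I would set $\gamma=d(p,q)$ (we may assume $\gamma\ge 0$) and apply Lemma~\ref{closefunctions} to produce $h_0\in H$ with $p(h_0)q(h_0)\ge(1-\beta-\gamma)^2$. Since $\|q\|_1\le 1+\beta$ this forces $p(h_0)\ge(1-\beta-\gamma)^2/(1+\beta)$, and symmetrically $q(h_0)\ge(1-\beta-\gamma)^2/(1+\beta)$. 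Using the elementary identity $(1+\beta)^2-(1-\beta-\gamma)^2=(2\beta+\gamma)(2-\gamma)\le 2(2\beta+\gamma)$, I would then record that the tail masses $\|p\|_1-p(h_0)$ and $\|q\|_1-q(h_0)$, and also $|p(h_0)-q(h_0)|$, are each at most $(4\beta+2\gamma)/(1+\beta)$.

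The second step is to peel off the point $h_0$, writing
\[\sp{p,r}-\sp{q,r}=(p(h_0)-q(h_0))\,r(h_0)+\sum_{h\ne h_0}(p(h)-q(h))\,r(h).\]
Since $r(h_0)\le\|r\|_1\le 1+\beta$, the first term has absolute value at most $(1+\beta)|p(h_0)-q(h_0)|\le 4\beta+2\gamma$. For the sum over $h\ne h_0$ the point is to treat its positive and negative parts separately rather than bounding $\sum_{h\ne h_0}|p(h)-q(h)|$ by $\sum_{h\ne h_0}(p(h)+q(h))$: the positive part is at most $\sum_{h\ne h_0}p(h)\,r(h)\le(1+\beta)(\|p\|_1-p(h_0))$ and the negative part is at least $-(1+\beta)(\|q\|_1-q(h_0))$, so the sum has absolute value at most $(1+\beta)\max\{\|p\|_1-p(h_0),\|q\|_1-q(h_0)\}\le 4\beta+2\gamma$. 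Adding the two contributions gives $|\sp{p,r}-\sp{q,r}|\le 8\beta+4\gamma$, which is at most $10\beta+5d(p,q)$ since $\gamma=d(p,q)\ge 0$.

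The argument is genuinely short, so the only thing that needs care — and what I would regard as the main (minor) obstacle — is the constant bookkeeping. Bounding the tail sum crudely would produce constants like $12$ and $6$ in place of $10$ and $5$, and bounding $|p(h_0)-q(h_0)|$ wastefully would similarly spoil the coefficient of $d(p,q)$; the separate treatment of the positive and negative parts of the tail sum, together with the estimate $(2\beta+\gamma)(2-\gamma)\le 2(2\beta+\gamma)$, is exactly what keeps the constants at $10$ and $5$. Beyond this, the proof uses nothing but Lemma~\ref{closefunctions} and the normalisation hypotheses $\|p\|_1,\|q\|_1,\|r\|_1\le 1+\beta$, with the bound $\beta\le 1/5$ providing a comfortable safety margin.
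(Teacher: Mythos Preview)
Your proof is correct and follows essentially the same approach as the paper: invoke Lemma~\ref{closefunctions} to find a point $h_0$ where both $p$ and $q$ are concentrated, then split the difference $\sp{p-q,r}$ into the $h_0$ contribution and the tail. The paper routes this through the bound $|\sp{p-q,r}|\le(1+\beta)\|p-q\|_1$ and then estimates $\|p-q\|_1\le 8\beta+4\gamma$, using $\beta\le 1/5$ at the end to absorb the factor $(1+\beta)$; your direct treatment of the inner product with the positive/negative split of the tail is a small refinement that lands at $8\beta+4\gamma$ without that extra factor, so in fact your argument never needs the hypothesis $\beta\le 1/5$.
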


\begin{proof}
For any element $p\in\pi(H)$ and any $s\in[1,\infty)$, write $\|p\|_s$ for $\bigl(\sum_{h\in H}p(h)^s\bigr)^{1/s}$ and $\|p\|_\infty$ for $\max_hp(h)$. Then
\begin{eqnarray*}
|\sp{p,r}-\sp{q,r}|&=&|\sp{p-q,r}|\\
&\leq&\|p-q\|_1\|r\|_\infty\\
&\leq&(1+\beta)\|p-q\|_1.
\end{eqnarray*}
Let us write $\gamma$ for $d(p,q)$. By Lemma \ref{closefunctions}, there exists $h$ such that $p(h)q(h)\geq(1-\beta-\gamma)^2$, which implies that $p(h)+q(h)\geq 2(1-\beta-\gamma)$. Therefore, $\sum_{h'\ne h}(p(h')+q(h'))\leq 4\beta+2\gamma$. Also, $|p(h)-q(h)|$ is at most $1+\beta-(1-\beta-\gamma)^2/(1+\beta)\leq 4\beta+2\gamma$. Therefore, $\|p-q\|_1\leq 8\beta+4\gamma$. Since $\beta\leq 1/5$, this gives us the desired estimate.
\end{proof}

We would also like to know that convolutions are approximately Lipschitz.

\begin{corollary} \label{lipschitzconvolutions}
Let $0\leq\beta\leq1/2$ and let $p,q,r,s\in\pi(H)$ be such that $\|p\|_1,\|q\|_1,\|r\|_1,\|s\|_1\leq 1+\beta$. Then 
\[d(p*r,q*s)\leq (1+\beta)^2(d(p,q)+d(r,s))+8\beta^2.\]
\end{corollary}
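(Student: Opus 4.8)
The plan is to sidestep the triangle inequality (Lemma~\ref{triangleineq}) altogether. Applying it via an intermediate point such as $p*s$ would be fatal here, because the convolved functions have $\ell_1$-mass only bounded by $(1+\beta)^2$, so the error term it contributes is linear in $\beta$, whereas the statement demands an error of order $\beta^2$. Instead I would exploit an exact algebraic identity together with the non-negativity of all functions in sight, so that nothing is lost ``on the diagonal''. The key point is that
\[
\sp{p*r,\,q*s}=\sum_{h_1+h_2=h_3+h_4}p(h_1)r(h_2)q(h_3)s(h_4)=\sum_{h\in H}(p*q_-)(h)\,(r*s_-)(-h),
\]
which just says that $\sp{p*r,q*s}$ is the value at $0$ of the convolution $(p*q_-)*(r*s_-)$. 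Since $p,q,r,s\ge 0$, both $p*q_-$ and $r*s_-$ are non-negative, so every summand above is non-negative, and keeping only the term $h=0$ gives
\[
\sp{p*r,\,q*s}\ \ge\ (p*q_-)(0)\,(r*s_-)(0)\ =\ \sp{p,q}\sp{r,s}.
\]
Writing $x=d(p,q)$ and $y=d(r,s)$, this is exactly $d(p*r,q*s)\le 1-(1-x)(1-y)=x+y-xy$.

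It then remains only to absorb the cross term $-xy$. First I would record the a priori lower bounds $x,y\ge 1-(1+\beta)^2$: by Cauchy--Schwarz $\sp{p,q}\le\|p\|_2\|q\|_2$, and $\|p\|_2^2\le(\max_h p(h))\sum_h p(h)\le\|p\|_1^2\le(1+\beta)^2$, so $\sp{p,q}\le(1+\beta)^2$, and likewise for $\sp{r,s}$. Setting $B=(1+\beta)^2-1=2\beta+\beta^2$, we therefore have $x+B\ge 0$ and $y+B\ge 0$, so the identity $B(x+y)+xy=(x+B)(y+B)-B^2$ yields
\[
-xy=B(x+y)-(x+B)(y+B)+B^2\ \le\ B(x+y)+B^2.
\]
Since $B^2=\beta^2(2+\beta)^2\le 8\beta^2$ for $\beta\le 1/2$, we conclude $x+y-xy\le(1+B)(x+y)+8\beta^2=(1+\beta)^2(x+y)+8\beta^2$, which is the claimed bound.

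Everything after the identity is a one-line computation, so there is no real technical obstacle; the only thing requiring care is the bookkeeping with $p_-$ and the verification that the two descriptions of $\sp{p*r,q*s}$ coincide. The one conceptual point worth flagging is that one must not try to compare $p*r$ and $q*s$ through an intermediate convolution: the reason the error comes out quadratic (rather than linear, as a naive argument would give) is precisely that the inner product of two convolutions already factors \emph{directly} through $\sp{p,q}$ and $\sp{r,s}$, with loss occurring only in the off-diagonal mass, which is itself of size $O(\beta)$.
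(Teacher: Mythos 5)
Your proof is correct and follows essentially the same route as the paper: the key step in both is the pointwise non-negativity argument giving $\sp{p*r,q*s}\geq\sp{p,q}\sp{r,s}$ (the paper phrases it as $\sp{p*r,q*s}=\sp{p*q_-,s*r_-}\geq\sp{p,q}\sp{r,s}$), followed by the same quadratic inequality $((1+\beta)^2-\sp{p,q})((1+\beta)^2-\sp{r,s})\geq 0$, which is exactly your $(x+B)(y+B)\geq 0$. You have merely spelled out the "quick calculation" that the paper leaves implicit.
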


\begin{proof}
We shall use the fact that $f*g_-(0)=\sp{f,g}$ for any two functions $f,g\in\pi(H)$. That implies that
\begin{eqnarray*}
\sp{p*r,q*s}&=&\sp{p*q_-,s*r_-}\\
&\geq&\sp{p,q}\sp{r,s}.
\end{eqnarray*}
Since $((1+\beta)^2-\sp{p,q})((1+\beta)^2-\sp{r,s})\geq 0$, 
\[\sp{p,q}\sp{r,s}\geq (1+\beta)^2(\sp{p,q}+\sp{r,s})-(1+\beta)^4.\]
The result follows after a quick calculation.
\end{proof}

The next lemma tells us that $\theta$ is close to a group homomorphism. Here and in what follows, we write $\delta_a$ for the function taking value $1$ at $a$ and $0$ everywhere else.

\begin{lemma}\label{homomorphism}
For every $x_1,x_2\in G$, $d(\theta(x_1+x_2),\theta(x_1)*\theta(x_2))\leq 1100\eta$.
\end{lemma}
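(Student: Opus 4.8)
The plan is to reduce the claim to ``approximate well-definedness'' statements about individual difference representations and then combine these using the triangle inequality (Lemma~\ref{triangleineq}) and the Lipschitz estimate for convolutions (Corollary~\ref{lipschitzconvolutions}). Throughout, the elements of $\pi(H)$ that arise have mass $1+O(\eta)$: by Lemma~\ref{almostconstant}, $\sum_h\psi(x)(h)=\mu*\mu*\mu(x)\in[1-\eta,1+\eta]$, so any convolution of at most four such $\psi$'s (or of the $\theta(x_i)$, which are averages of two of them) has mass at most $(1+\eta)^4$; I write $\beta$ generically for the resulting mass-defect parameter, which is $O(\eta)$ and hence below $1/2$ once $\eta$ is small. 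The first step is to parametrise $\theta(x_1+x_2)$ by a single variable: writing $A_w:=\psi(w+x_1+x_2)*\psi_-(w)$, the pairs $(w+x_1+x_2,w)$ run uniformly over all representations of $x_1+x_2$ as a difference, so $\theta(x_1+x_2)=\E_w A_w$. Since $\sp{\cdot,\cdot}$ is linear in its first argument, $d(\theta(x_1+x_2),\theta(x_1)*\theta(x_2))=\E_w\, d(A_w,\theta(x_1)*\theta(x_2))$, and it suffices to bound the right-hand side.

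For each $w$ I would split the difference $x_1+x_2$ into $x_1$ and $x_2$ by inserting a near-identity. Put $E_w:=\psi(w+x_2)*\psi_-(w+x_2)$; then, by commutativity and associativity of convolution on $\pi(H)$,
\[B_w\ :=\ A_w*E_w\ =\ \bigl[\psi(w+x_1+x_2)*\psi_-(w+x_2)\bigr]*\bigl[\psi(w+x_2)*\psi_-(w)\bigr],\]
the first bracket being a representation of $x_1$ and the second a representation of $x_2$. By Lemma~\ref{triangleineq}, $d(A_w,\theta(x_1)*\theta(x_2))\le d(A_w,B_w)+d(B_w,\theta(x_1)*\theta(x_2))+O(\eta)$. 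For the first summand, Corollary~\ref{lipschitzconvolutions} applied to $A_w=A_w*\delta_0$ against $A_w*E_w$ gives $d(A_w,B_w)\le(1+\beta)^2\bigl(d(A_w,A_w)+d(\delta_0,E_w)\bigr)+8\beta^2$. For the second, Corollary~\ref{lipschitzconvolutions} applied to $B_w=[\text{rep.\ of }x_1]*[\text{rep.\ of }x_2]$ against $\theta(x_1)*\theta(x_2)$ gives $d(B_w,\theta(x_1)*\theta(x_2))\le(1+\beta)^2\bigl(d(\psi(w+x_1+x_2)*\psi_-(w+x_2),\theta(x_1))+d(\psi(w+x_2)*\psi_-(w),\theta(x_2))\bigr)+8\beta^2$.

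It remains to check that each of these distances is $O(\eta)$ after averaging over $w$. Lemma~\ref{welldefined} says exactly that for any fixed $x$ the average of $d(\psi(z_1)*\psi_-(z_2),\theta(x))$ over representations $z_1-z_2=x$ equals $d(\theta(x),\theta(x))\le75\eta$; since each family of representations above runs uniformly as $w$ varies, this controls $\E_w d(\psi(w+x_1+x_2)*\psi_-(w+x_2),\theta(x_1))$, $\E_w d(\psi(w+x_2)*\psi_-(w),\theta(x_2))$ and $\E_w d(A_w,\theta(x_1+x_2))\le75\eta$. The self-distances $d(A_w,A_w)$ are not small for individual $w$, but Lemma~\ref{triangleineq} through $\theta(x_1+x_2)$ gives $d(A_w,A_w)\le2\,d(A_w,\theta(x_1+x_2))+O(\eta)$, which is $O(\eta)$ on average; the same trick handles $d(\psi(w+x_2)*\psi_-(w),\psi(w+x_2)*\psi_-(w))$, and $d(\theta(x_1),\theta(x_1))\le75\eta$ directly. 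Finally $\E_w E_w=\theta(0)$, so $\E_w d(\delta_0,E_w)\le d(\delta_0,\theta(0))+\E_w d(\theta(0),E_w)+O(\eta)\le 77\eta+75\eta+O(\eta)$, using Corollary~\ref{thetaofnought} (which identifies the concentration point of $\theta(0)$ as $0$, so that $d(\delta_0,\theta(0))=1-\theta(0)(0)\le77\eta$) and Lemma~\ref{welldefined} (together with $\E_w E_w=\theta(0)$ and linearity). Summing all the contributions yields $\E_w d(A_w,\theta(x_1)*\theta(x_2))\le C\eta$ with $C\le 1100$, which is the assertion.

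The main obstacle is arithmetic rather than conceptual: each use of Lemma~\ref{triangleineq} and of Corollary~\ref{lipschitzconvolutions} costs an additive $O(\eta)$ from mass defects and a multiplicative factor $1+O(\eta)$, so the chain of comparisons must be kept short — two intermediate steps suffice — and the $1+O(\eta)$ mass bounds must be tracked throughout. The one slightly subtle point is that single representations $A_w=\psi(w+x_1+x_2)*\psi_-(w)$, and their self-distances $d(A_w,A_w)$, need not be small; one must exploit that Lemma~\ref{welldefined} makes them small only on average over $w$, which is all that is needed since we only ever bound $\E_w d(A_w,\cdot)$.
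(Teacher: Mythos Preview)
Your approach is correct and essentially mirrors the paper's: both insert the near-identity $\psi*\psi_-$ at an intermediate point to split a representation of $x_1+x_2$ into representations of $x_1$ and $x_2$, then control the errors via Lemma~\ref{welldefined} and Corollary~\ref{thetaofnought}. The only organisational difference is that the paper bounds the first comparison by an algebraic rearrangement of the inner product together with Lemma~\ref{continuity}, whereas you use Corollary~\ref{lipschitzconvolutions} with $r=\delta_0$, $s=E_w$ and handle the resulting self-distance $d(A_w,A_w)$ by a triangle-inequality detour through $\theta(x_1+x_2)$; the numerics still land comfortably below $1100\eta$ (and note, incidentally, that $\E_w d(\delta_0,E_w)=d(\delta_0,\E_w E_w)=d(\delta_0,\theta(0))\le 77\eta$ directly by linearity, so your triangle-inequality step there is unnecessary).
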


\begin{proof}
By definition,
\[\theta(x_1+x_2)=\E_x\psi(x+x_1+x_2)*\psi_-(x).\]
We would like to begin by ``adding and subtracting $\psi(x+x_1)$''. More precisely, we would like to approximate $\theta(x_1+x_2)$ by
\begin{equation*}\label{eqn0}
\E_x\psi(x+x_1+x_2)*\psi_-(x+x_1)*\psi(x+x_1)*\psi_-(x).
\end{equation*}
Lemma \ref{welldefined} tells us that $d(\theta(x_1+x_2),\theta(x_1+x_2))\leq 75\eta$. Since the distance is a bilinear function (in the sense that it commutes with expectations), this implies that
\begin{equation*}
\E_{x,y}d(\psi(x+x_1+x_2)*\psi_-(x),\psi(y+x_1+x_2)*\psi_-(y))\leq 75\eta.
\end{equation*}
We are trying to estimate the distance
\begin{equation}\label{eqn2}
d(\E_x\psi(x+x_1+x_2)*\psi_-(x),\E_y\psi(y+x_1+x_2)*\psi_-(y+x_1)*\psi(y+x_1)*\psi_-(y)).
\end{equation}
By the bilinearity of $d$, it equals
\[\E_{x,y}d(\psi(x+x_1+x_2)*\psi_-(x),\psi(y+x_1+x_2)*\psi_-(y+x_1)*\psi(y+x_1)*\psi_-(y)),\]
which equals
\[\E_{x,y}d(\psi(x+x_1+x_2)*\psi_-(x)*\psi_-(y+x_1+x_2)*\psi(y),\psi_-(y+x_1)*\psi(y+x_1)).\]
By Lemma \ref{continuity}, this is at most the sum of
\[\E_{x,y}d(\psi(x+x_1+x_2)*\psi_-(x)*\psi_-(y+x_1+x_2)*\psi(y),\delta_0),\]
and
\[5\E_yd(\psi_-(y+x_1)*\psi(y+x_1),\delta_0)+10\beta,\]
where $\beta=(1+\eta)^4-1$. 
The first of these terms is equal to
\[\E_{x,y}d(\psi(x+x_1+x_2)*\psi_-(x),\psi(y+x_1+x_2)*\psi_-(y)),\]
which, as we have already remarked, is at most $75\eta$. We also have that
\[\E_yd(\psi_-(y+x_1)*\psi(y+x_1),\delta_0)=d(\E_y\psi_-(y+x_1)*\psi(y+x_1),\delta_0)=d(\theta(0),\delta_0),\]
which, by Corollary \ref{thetaofnought}, is at most $77\eta$. Therefore, the distance (\ref{eqn2}) is at most $75\eta+385\eta+10\beta<700\eta$.

We shall now further approximate 
\[\E_x\psi(x+x_1+x_2)*\psi_-(x+x_1)*\psi(x+x_1)*\psi_-(x)\]
by
\[\theta(x_2)*\theta(x_1)=\E_{u,v}\psi(u+x_1+x_2)*\psi_-(u+x_1)*\psi(v+x_1)*\psi_-(v).\]
By Corollary \ref{lipschitzconvolutions} with $\beta=2\eta+\eta^2$, and using the bilinearity of $d$, the distance between them is at most
\[(1+\beta)^2\E_{x,u}d(\psi(x+x_1+x_2)*\psi_-(x+x_1),\psi(u+x_1+x_2)*\psi_-(u+x_1))\]
\[+(1+\beta)^2\E_{x,v}d(\psi(x+x_1)*\psi_-(x),\psi(v+x_1)*\psi_-(v))+8\beta^2.\]
But this equals $(1+\beta)^2(d(\theta(x_2),\theta(x_2))+d(\theta(x_1),\theta(x_1)))+8\beta^2$,
which, by Lemma \ref{welldefined}, is at most $(1+\beta)^2.150\eta+8\beta^2$. Since $(1+\beta)^2=(1+\eta)^4\leq 2$, this is at most $400\eta$.
\end{proof}

Now let us put together what we have proved so far.

\begin{corollary} \label{thetatogamma}
There exists a group homomorphism $\gamma:G\to H$ such that $d(\theta(x),\delta_{\gamma(x)})\leq 80\eta$ for every $x\in G$. 
\end{corollary}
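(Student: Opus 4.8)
The plan is to define $\gamma(x)$ to be the point at which $\theta(x)$ is concentrated, to observe that the stated distance estimate is then essentially immediate, and to deduce additivity of $\gamma$ from Lemma~\ref{homomorphism}.

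First I would record the relevant mass bounds. As in the proof of Corollary~\ref{thetaconcentrated}, $\sum_h\psi(y)(h)=\mu*\mu*\mu(y)\in[1-\eta,1+\eta]$ by Lemma~\ref{almostconstant}, so $\sum_h\theta(x)(h)\in[(1-\eta)^2,(1+\eta)^2]$ for every $x$, and consequently $\|\theta(x_1)*\theta(x_2)\|_1=\|\theta(x_1)\|_1\|\theta(x_2)\|_1\le(1+\eta)^4$. By Corollary~\ref{thetaconcentrated} there is, for each $x$, some $h$ with $\theta(x)(h)\ge 1-77\eta$; any $h'\ne h$ then satisfies $\theta(x)(h')\le(1+\eta)^2-(1-77\eta)<80\eta$ for $\eta$ small, so this $h$ is unique, and it is the only point carrying $\theta(x)$-mass at least $80\eta$. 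Define $\gamma(x)$ to be this $h$. Then $d(\theta(x),\delta_{\gamma(x)})=1-\theta(x)(\gamma(x))\le 77\eta\le 80\eta$.

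It remains to show $\gamma(x_1+x_2)=\gamma(x_1)+\gamma(x_2)$ for all $x_1,x_2$. By Lemma~\ref{homomorphism}, $d(\theta(x_1+x_2),\theta(x_1)*\theta(x_2))\le 1100\eta$, so Lemma~\ref{closefunctions}, applied with $\beta=(1+\eta)^4-1$, yields an $h^{*}$ with $\theta(x_1+x_2)(h^{*})\cdot(\theta(x_1)*\theta(x_2))(h^{*})\ge(1-\beta-1100\eta)^2$. Dividing by the $\ell_1$-mass of the other factor, each of $\theta(x_1+x_2)(h^{*})$ and $(\theta(x_1)*\theta(x_2))(h^{*})$ is at least $(1-\beta-1100\eta)^2/(1+\beta)$, which for $\eta$ small exceeds both $80\eta$ and $(1+\eta)^4-(1-77\eta)^2$. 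From $\theta(x_1+x_2)(h^{*})\ge 80\eta$ and the previous paragraph we get $h^{*}=\gamma(x_1+x_2)$. On the other hand $(\theta(x_1)*\theta(x_2))(\gamma(x_1)+\gamma(x_2))\ge\theta(x_1)(\gamma(x_1))\theta(x_2)(\gamma(x_2))\ge(1-77\eta)^2$, while any $h'\ne\gamma(x_1)+\gamma(x_2)$ has $(\theta(x_1)*\theta(x_2))(h')\le(1+\eta)^4-(1-77\eta)^2$; since $(\theta(x_1)*\theta(x_2))(h^{*})$ strictly exceeds this last quantity, $h^{*}=\gamma(x_1)+\gamma(x_2)$. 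Hence $\gamma(x_1+x_2)=\gamma(x_1)+\gamma(x_2)$, so $\gamma$ is a group homomorphism (and $\gamma(0)=0$, in agreement with Corollary~\ref{thetaofnought}).

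I do not anticipate a serious obstacle: the argument is bookkeeping with the functional $d$ and the $\ell_1$-mass estimates. The one thing to watch is that all the threshold comparisons used to guarantee ``at most one concentrated point'' --- the value $80\eta$ for $\theta(x)$, and $(1-\beta-1100\eta)^2/(1+\beta)$ against $(1+\eta)^4-(1-77\eta)^2$ for $\theta(x_1)*\theta(x_2)$ --- hold simultaneously once $\eta$ is taken small enough; this is precisely the role of the slack between the $77\eta$ delivered by the concentration lemmas and the $80\eta$ appearing in the statement.
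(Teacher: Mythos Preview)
Your argument is correct. The route differs from the paper's, though both are short. The paper works entirely through the distance functional $d$: it uses Corollary~\ref{lipschitzconvolutions} to bound $d(\theta(x_1)*\theta(x_2),\delta_{\gamma(x_1)+\gamma(x_2)})$, then Lemma~\ref{continuity} twice together with Lemma~\ref{homomorphism} to chain this with $d(\theta(x_1+x_2),\delta_{\gamma(x_1+x_2)})$ and obtain $d(\delta_{\gamma(x_1+x_2)},\delta_{\gamma(x_1)+\gamma(x_2)})\le 4000\eta<1$, which forces equality of the two deltas. You instead bypass the Lipschitz machinery entirely: a single application of Lemma~\ref{closefunctions} produces a common heavy point $h^{*}$, and direct $\ell_1$-mass bookkeeping pins $h^{*}$ down as both $\gamma(x_1+x_2)$ and $\gamma(x_1)+\gamma(x_2)$. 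Your approach is the more elementary of the two, since it does not invoke Lemma~\ref{continuity} or Corollary~\ref{lipschitzconvolutions} at all; the paper's approach has the virtue of being systematic, treating $d$ as a quasi-metric throughout and never unpacking the pointwise values of $\theta(x_1)*\theta(x_2)$. One small cosmetic point: when you write ``dividing by the $\ell_1$-mass of the other factor'', what you are actually using is that each of $\theta(x_1+x_2)(h^*)$ and $(\theta(x_1)*\theta(x_2))(h^*)$ is bounded above by the corresponding $\ell_1$-norm (since these are nonnegative functions), not literally the $\ell_1$-mass of the other factor; the bound you state is correct, but the wording could be tightened.
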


\begin{proof}
Corollary \ref{thetaconcentrated} is the statement that there exists a \textit{function} $\gamma:G\to H$ with the property stated. It remains to show that $\gamma$ is a group homomorphism. 

Let $\beta=(1+\eta)^2-1$.  Then, by Corollary \ref{lipschitzconvolutions},
\[d(\theta(x_1)*\theta(x_2),\delta_{\gamma(x_1)}*\delta_{\gamma(x_2)})\leq(1+\beta)^2\bigl(d(\theta(x_1),\delta_{\gamma(x_1)})+d(\theta(x_2),\delta_{\gamma(x_2)})\bigr)+8\beta^2,\]
which is at most $(1+\beta)^2.160\eta+8\beta^2\leq 400\eta$.

By Corollary \ref{thetaconcentrated}, Lemma \ref{homomorphism}, Lemma \ref{continuity} (twice) and this calculation,
\begin{align*}
d(\delta_{\gamma(x_1+x_2)},\delta_{\gamma(x_1)}*\delta_{\gamma(x_2)}) & \leq d(\theta(x_1+x_2),\delta_{\gamma(x_1)}*\delta_{\gamma(x_2)}) + 5 d(\theta(x_1+x_2), \delta_{\gamma(x_1+x_2)})+ 10\beta\\
& \leq d(\theta(x_1+x_2),\delta_{\gamma(x_1)}*\delta_{\gamma(x_2)}) + 400\eta + 10\beta\\
& \leq d(\theta(x_1+x_2),\theta(x_1)*\theta(x_2)) + 5 d(\theta(x_1)*\theta(x_2),\delta_{\gamma(x_1)}*\delta_{\gamma(x_2)}) + 400 \eta + 20 \beta\\
& \leq 1100 \eta + 2000 \eta + 400 \eta + 20 \beta \leq 4000 \eta.
\end{align*}
But $\delta_{\gamma(x_1)}*\delta_{\gamma(x_2)}=\delta_{\gamma(x_1)+\gamma(x_2)}$, so if $\eta < 1/4000$, the only way this estimate can be true is if $\gamma(x_1+x_2)=\gamma(x_1)+\gamma(x_2)$. 
\end{proof}

It remains to relate the homomorphism $\gamma$ to the original function $\phi:U\to H$. This we do with a standard averaging argument. 

\begin{theorem}
There exists an affine homomorphism $\alpha:G\to H$ such that $\phi(x)=\alpha(x)$ for all but $80\eta|U|$ elements $x\in U$.
\end{theorem}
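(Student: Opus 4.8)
The plan is to take $\gamma$ (from Corollary~\ref{thetatogamma}) as the linear part of the affine homomorphism $\alpha$ and to pin down its constant term by a short averaging argument. Writing $g(u)=\phi(u)-\gamma(u)$ for $u\in U$, it suffices to show that $g$ takes a single value $h_0\in H$ on all but $80\eta|U|$ elements of $U$; then $\alpha(x)=\gamma(x)+h_0$ is an affine homomorphism that does the job.

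First I would translate the statement that $\theta$ is concentrated at $\gamma$ into a statement about $\psi$. For $y\in G$ let $\tilde\psi(y)\in\pi(H)$ be the shifted distribution $\tilde\psi(y)(k)=\psi(y)(\gamma(y)+k)$, and set $\bar\psi=\E_y\tilde\psi(y)$. Unwinding the definitions of $\theta=\psi*\psi_-^-$ and of convolution of $\pi(H)$-valued functions, and using $\gamma(y-y')=\gamma(y)-\gamma(y')$, one gets $(\psi(y)*\psi_-(y'))(\gamma(y-y'))=\sp{\tilde\psi(y),\tilde\psi(y')}$, hence $\theta(x)(\gamma(x))=\E_{y-y'=x}\sp{\tilde\psi(y),\tilde\psi(y')}$. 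Averaging this over $x$ and invoking Corollary~\ref{thetatogamma} (which gives $\theta(x)(\gamma(x))=\sp{\theta(x),\delta_{\gamma(x)}}\ge 1-80\eta$ for \emph{every} $x$) yields
\[\sum_h\bar\psi(h)^2=\E_{y,y'}\sp{\tilde\psi(y),\tilde\psi(y')}\ge 1-80\eta.\]
On the other hand $\sum_h\bar\psi(h)=\E_y\sum_h\psi(y)(h)=\E_y(\mu*\mu*\mu)(y)=1$, since for $u\in U$ the value $\phi(u)$ is well defined and $\E\mu=1$. As $\sum_h\bar\psi(h)^2\le\|\bar\psi\|_\infty\sum_h\bar\psi(h)$, there is some $k_0\in H$ with $\bar\psi(k_0)\ge 1-80\eta$.

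Next I would read this back as a statement about $\phi$. Expanding the definition of $\psi$ and using that $\mu$ is the characteristic measure of $U$, so that $\mu$-weighted averages over $G^3$ are uniform averages over $U^3$, the quantity $\bar\psi(k_0)=\E_y\psi(y)(\gamma(y)+k_0)$ equals the probability, over a uniform triple $(a,b,c)\in U^3$, that $\phi(a)+\phi(b)+\phi(c)=\gamma(a+b+c)+k_0$, that is (using $\gamma(a+b+c)=\gamma(a)+\gamma(b)+\gamma(c)$), that $g(a)+g(b)+g(c)=k_0$. Thus $g(a)+g(b)+g(c)=k_0$ for all but an $80\eta$-fraction of triples in $U^3$. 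Averaging over the last two coordinates, there exist $b_0,c_0\in U$ with $g(a)=k_0-g(b_0)-g(c_0)=:h_0$ for all but $80\eta|U|$ elements $a\in U$; equivalently $\phi(x)=\gamma(x)+h_0$ off a set of size at most $80\eta|U|$, and $\alpha(x)=\gamma(x)+h_0$ is the required affine homomorphism.

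The only genuine subtlety is to resist first proving that each individual $\psi(y)$ is concentrated and then running a popularity argument on the resulting partial map $G\to H$: that route loses a factor of $\sqrt\eta$. Working directly with the average $\bar\psi$ and the inequality $\|\bar\psi\|_2^2\le\|\bar\psi\|_\infty\|\bar\psi\|_1$ keeps the loss linear and passes the bound over $U^3$ straight through, so the constant $80$ is inherited verbatim from Corollary~\ref{thetatogamma}. Everything else — tracking the normalisation $\E\mu=1$ and the passage between $\mu$-weighted averages on $G$ and uniform averages on $U$ — is routine bookkeeping.
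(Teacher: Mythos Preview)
Your argument is correct and follows the same core idea as the paper: average the pointwise bound $\theta(x)(\gamma(x))\ge 1-80\eta$ from Corollary~\ref{thetatogamma} over $x$, then freeze all but one of the resulting variables. The paper does this in one step, writing out $\theta(x)(\gamma(x))$ directly as a $\mu$-weighted expectation over six-tuples $(x_1,\dots,x_6)$ and then fixing $x_2,\dots,x_6$ by averaging, which produces the affine map $\alpha(x)=\gamma(x)+\gamma(z)-h$ with $z=x_2+x_3-x_4-x_5-x_6$ and $h=\phi(x_2)+\phi(x_3)-\phi(x_4)-\phi(x_5)-\phi(x_6)$. You instead package three of the six variables into the shifted $\psi$, observe that the averaged quantity is $\|\bar\psi\|_2^2$, use $\|\bar\psi\|_2^2\le\|\bar\psi\|_\infty\|\bar\psi\|_1$ together with $\|\bar\psi\|_1=1$ to locate $k_0$ (this replaces fixing three of the variables), and then fix the remaining two. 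Both routes preserve the constant $80$ exactly; the paper's is a line shorter, yours is a touch more conceptual.
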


\begin{proof}
First, let us write an expression for $\theta(x)$. It is given by
\[\theta(x)(h)=\E_{x_1+x_2+x_3-x_4-x_5-x_6=x}\mu(x_1)\dots\mu(x_6)\mathbf{1}_{\phi(x_1)+\phi(x_2)+\phi(x_3)-\phi(x_4)-\phi(x_5)-\phi(x_6)=h}.\]
Therefore, we have just shown that
\[\E_{x_1+x_2+x_3-x_4-x_5-x_6=x}\mu(x_1)\dots\mu(x_6)\mathbf{1}_{\phi(x_1)+\phi(x_2)+\phi(x_3)-\phi(x_4)-\phi(x_5)-\phi(x_6)=\gamma(x)}\]
is at least $1-80\eta$.

Taking the expectation over $x$, we deduce that
\[\E_{x_1,\dots,x_6}\mu(x_1)\dots\mu(x_6)\mathbf{1}_{\phi(x_1)+\phi(x_2)+\phi(x_3)-\phi(x_4)-\phi(x_5)-\phi(x_6)=\gamma(x_1+x_2+x_3-x_4-x_5-x_6)}\]
is also at least $1-80\eta$. Therefore (since $\mu$ is a probability measure on $G$), there exist $x_2,\dots,x_6$ such that, writing $z=x_2+x_3-x_4-x_5-x_6$ and $h=\phi(x_2)+\phi(x_3)-\phi(x_4)-\phi(x_5)-\phi(x_6)$, we have
\[\E_{x_1}\mu(x_1)\mathbf{1}_{\phi(x_1)=\gamma(x_1+z)-h}\geq 1-80\eta.\]
That is, $\phi(x)=\gamma(x)+\gamma(z)-h$ for all but $80\eta|U|$ elements of $U$. Thus, we may take $\alpha(x)=\gamma(x)+\gamma(z)-h$.
\end{proof}

\section{Freiman homomorphisms that are affine almost everywhere are affine everywhere}

The final step in the argument is to prove that if $U$ is a random set where every element is chosen independently with probability $C n^{-2/3} (\log n)^{1/3}$, then with high probability every Freiman homomorphism defined on $U$ that is affine on at least $99.99\%$ of $U$ is in fact affine on all of $U$. 

\begin{definition}
Let $U$ be a subset of an Abelian group $G$. An \emph{affine homomorphism} $\phi:U\to H$ from $U$ to an Abelian group $H$ is a function of the form $\phi(u)=a+\psi(u)$, where $\psi:U\to H$ is the restriction to $U$ of a group homomorphism. The set $U$ has the $(1-\eta)$-\emph{extension property} if every Freiman homomorphism from $U$ to an Abelian group $H$ that coincides with an affine homomorphism $\phi$ on a subset $V\subset U$ of size at least $(1-\eta)|U|$ is in fact equal to $\phi$.
\end{definition}

\begin{definition}
Let $U$ be a subset of a finite Abelian group $G$, let $W\subset U$ and let $V=U\setminus W$. Then $W$ is \emph{additively isolated in} $U$ if $(V+V-V)\cap W=\emptyset$. Let $\eta>0$. The set $U$ is $(1-\eta)$-\emph{additively connected} if no subset of $U$ of size at most $\eta|U|$ is additively isolated in $U$. 
\end{definition}

The definitions we have given are not standard, but they are the ones that we shall need for our argument. Let us prove a simple lemma that illustrates their usefulness.

\begin{lemma}
Let $G$ be a finite Abelian group and let $\eta>0$. Then every $(1-\eta)$-additively connected subset $U$ of $G$ has the $(1-\eta)$-extension property.
\end{lemma}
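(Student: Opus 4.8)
The plan is to argue by contradiction, working with the set of points at which the given Freiman homomorphism disagrees with the affine map and then using additive connectivity to produce a single additive equation that forces agreement at one of those points.

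First I would fix notation. Suppose $U$ is $(1-\eta)$-additively connected and let $\phi:U\to H$ be a Freiman homomorphism that coincides on a subset $V\subseteq U$ of size at least $(1-\eta)|U|$ with an affine homomorphism $\phi_0$, say $\phi_0(u)=a_0+\psi(u)$ where $\psi$ is the restriction to $U$ of a group homomorphism $G\to H$. Set $W'=\{u\in U:\phi(u)\neq\phi_0(u)\}$. Since $W'\subseteq U\setminus V$ we have $|W'|\leq\eta|U|$, and the statement to be proved is exactly that $W'=\emptyset$.

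Now suppose for contradiction that $W'\neq\emptyset$. Because $|W'|\leq\eta|U|$ and $U$ is $(1-\eta)$-additively connected, the set $W'$ is not additively isolated in $U$; writing $V'=U\setminus W'$, this means $(V'+V'-V')\cap W'\neq\emptyset$. Choose $a,b,c\in V'$ and $w\in W'$ with $a+b-c=w$, i.e.\ $a+b=c+w$. Since $\phi$ is a Freiman homomorphism on all of $U$, this gives $\phi(a)+\phi(b)=\phi(c)+\phi(w)$, so $\phi(w)=\phi(a)+\phi(b)-\phi(c)$. But $a,b,c\in V'$, so $\phi$ agrees with $\phi_0$ at each of them, and since $\psi$ is a homomorphism, $\phi_0(a)+\phi_0(b)-\phi_0(c)=a_0+\psi(a+b-c)=\phi_0(w)$. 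Hence $\phi(w)=\phi_0(w)$, contradicting $w\in W'$. Therefore $W'=\emptyset$, so $\phi=\phi_0$ on all of $U$, and $U$ has the $(1-\eta)$-extension property.

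Since this is a direct deduction from the two definitions, there is no real obstacle to overcome; the only points requiring a little care are checking that $W'$ really lies in a subset of size at most $\eta|U|$ (so that the connectivity hypothesis applies to it) and recording that an affine homomorphism satisfies $\phi_0(a)+\phi_0(b)=\phi_0(c)+\phi_0(w)$ whenever $a+b=c+w$, which is immediate from the homomorphism property of $\psi$.
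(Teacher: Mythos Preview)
Your proof is correct and follows essentially the same approach as the paper: both define the set $W'$ (the paper calls it $W$) of points where $\phi$ disagrees with the affine map, observe that $|W'|\leq\eta|U|$, and then use the additive-connectivity hypothesis to find $a,b,c$ in its complement with $a+b-c\in W'$, whence the Freiman property forces agreement at that point and gives a contradiction.
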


\begin{proof}
Let $\phi$ be a Freiman homomorphism defined on $U$, let $V_0$ be a subset of $U$ of size at least $(1-\eta)|U|$ and suppose that the restriction of $\phi$ to $V_0$ is also the restriction to $V_0$ of an affine homomorphism $\psi:G\to H$. Let $V$ be the set of all points $u\in U$ such that $\phi(u)=\psi(u)$.

If $V$ is not the whole of $U$, then we have an easy contradiction, since $|V|\geq(1-\eta)|U|$, from which it follows by hypothesis that its complement $W$ is not additively isolated in $U$. Therefore, we can find $v_1,v_2,v_3\in V$ such that $v_1+v_2-v_3=w\in W$. Since $\phi$ is a Freiman homomorphism, it follows that $\phi(w)=\phi(v_1)+\phi(v_2)-\phi(v_3)$, which equals $\psi(v_1)+\psi(v_2)-\psi(v_3)$, and since $\psi$ is an affine homomorphism, this is equal to $\psi(w)$. 
\end{proof}

The converse of this statement is not quite true, because it is also possible for $W+W-W$ to intersect $V$. We leave it as an exercise to find a counterexample.

Our strategy, then, is to prove that if $G$ is a finite Abelian group of order $n$, then a binomial random subset $U\subset G$ where each element is chosen independently with probability $C n^{-2/3} (\log n)^{1/3}$  is $(1-\eta)$-additively connected with high probability for some absolute constant $\eta>0$. In what follows, we will work in a slightly different probabilistic model, proving that random subsets $U$ of $G$ with fixed size $C(n\log n)^{1/3}$ are $(1-\eta)$-additively connected with high probability for some appropriate $\eta>0$. However, this easily implies that the same holds in the binomial model.

We begin with a preparatory lemma.

\begin{lemma} \label{azumalemma}
Let $G$ be a finite Abelian group of order $n$, let $K$ be a subset of $G$ of size $k$ and let $A$ be a random subset of $G$ of size $m$. Then, if $km\leq n$, $\P[|A+K|<km/4]\leq e^{-m/32}$, and if $n\leq km\leq 2n$, $\P[|A+K|<n/2]\leq e^{-m/800}$.
\end{lemma}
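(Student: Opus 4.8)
The plan is to prove concentration of $|A+K|$ about its mean using Azuma's inequality, and separately to bound the mean below by an elementary computation; the two regimes of the lemma then drop out of a short calculation.

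For the concentration step I would reveal $A$ one element at a time, writing $A=\{a_1,\dots,a_m\}$ for a uniformly random sequence of distinct elements of $G$, and run the Doob martingale $X_i=\E[\,|A+K|\mid a_1,\dots,a_i\,]$, so that $X_0=\E|A+K|$ and $X_m=|A+K|$. The crucial point is that each increment is bounded by $k$. To see this, fix $a_1,\dots,a_{i-1}$, set $R=G\setminus\{a_1,\dots,a_{i-1}\}$, and for two possible values $\alpha,\alpha'$ of $a_i$ couple the two continuations as follows: sample the tail $T$ (a uniform $(m-i)$-subset of $R\setminus\{\alpha\}$) for the $\alpha$-run, and for the $\alpha'$-run use the same tail if $\alpha'\notin T$ and the tail $(T\setminus\{\alpha'\})\cup\{\alpha\}$ if $\alpha'\in T$. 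A one-line check shows the second tail has the correct uniform marginal, and that the two resulting sets either coincide or differ in exactly one element; since $A+K=\bigcup_{a\in A}(a+K)$ with $|a+K|=k$, the corresponding values of $|A+K|$ then differ by at most $k$, which gives $|X_i-X_{i-1}|\le k$. Hence $\sum_{i=1}^m c_i^2\le mk^2$, and the one-sided Azuma inequality yields $\P[\,|A+K|<\E|A+K|-t\,]\le\exp\bigl(-t^2/(2mk^2)\bigr)$ for every $t\ge 0$.

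Next I would bound $\E|A+K|=\sum_{x\in G}\P[x\in A+K]$ from below. For each $x$, the event $x\notin A+K$ is exactly the event that $A$ misses the $k$-element set $x-K$, which has probability $\binom{n-k}{m}/\binom{n}{m}=\prod_{i=0}^{k-1}\frac{n-m-i}{n-i}\le(1-m/n)^k\le e^{-mk/n}$ when $k+m\le n$, and is $0$ when $k+m>n$ (two sets of sizes $k$ and $m$ in $G$ must then meet). Either way $\P[x\in A+K]\ge 1-e^{-mk/n}$, so $\E|A+K|\ge n\bigl(1-e^{-mk/n}\bigr)$.

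It remains to combine the two ingredients. If $km\le n$ then $mk/n\le 1$, so $e^{-mk/n}\le 1-mk/2n$ (using $e^{-t}\le 1-t+t^2/2\le 1-t/2$ on $[0,1]$) and hence $\E|A+K|\ge mk/2$; taking $t=\E|A+K|-mk/4\ge mk/4$ in the Azuma bound gives $\P[\,|A+K|<mk/4\,]\le\exp\bigl(-(mk/4)^2/(2mk^2)\bigr)=e^{-m/32}$. If instead $n\le km\le 2n$ then $mk/n\ge 1$, so $\E|A+K|\ge n(1-e^{-1})>n/2$, and with $t=\E|A+K|-n/2\ge(\tfrac12-e^{-1})n$ we get $\P[\,|A+K|<n/2\,]\le\exp\bigl(-(\tfrac12-e^{-1})^2 n^2/(2mk^2)\bigr)$; since $mk^2=(mk)k\le 2nk$ and $k\le 2n/m$ (from $km\le 2n$), we have $mk^2\le 4n^2/m$, so the exponent is at most $-(\tfrac12-e^{-1})^2 m/8\le -m/800$, as required. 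The only step that needs genuine care is the sharp increment bound $k$ (rather than $2k$) — i.e.\ checking that the swap coupling really reproduces the conditional law of $A$ — since the constants $1/32$ and $1/800$ depend on it; the rest is routine estimation.
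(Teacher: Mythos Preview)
Your proof is correct and follows essentially the same approach as the paper: bound $\E|A+K|\ge n(1-e^{-km/n})$ elementarily, apply Azuma's inequality with Lipschitz constant $k$ (which the paper simply asserts, while you justify it via the swap coupling), and then split into the two regimes to recover the constants $1/32$ and $1/800$. The only cosmetic differences are that the paper writes the miss probability as $(1-k/n)^m$ rather than your hypergeometric form and, in the second regime, rounds $n(1-e^{-1})$ to $3n/5$ before applying Azuma.
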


\begin{proof}
Let $x$ be any element of $G$. The probability that $x\in A+K$ is at least $1-(1-k/n)^m\geq 1-e^{-km/n}$. Therefore, the expectation of $|A+K|$ is at least $n(1-e^{-km/n})$. If we alter a single element of $A$, then the change to $|A+K|$ is at most $k$. It follows by Azuma's inequality that 
\[\P[|A+K| < \E|A+K| - tk]\leq e^{-t^2/2m}.\]
Now $e^{-x}\leq 1-x+x^2/2$ for every $x\geq 0$, so if $km\leq n$, then $n(1-e^{-km/n})\geq n(km/n-k^2m^2/2n^2)\geq nkm/2n=km/2$. Therefore, $\P[|A+K|<km/4]\leq e^{-(m/4)^2/2m}$, which establishes the first bound. If $km\geq n$, then $n(1-e^{-km/n})\geq n(1-1/e)>3n/5$, so $\P[|A+K|<n/2]\leq e^{-(n/10k)^2/2m}\leq e^{-(m/20)^2/2m}$, which establishes the second.
\end{proof}

Now let $X$ be the set $\{1,2,\dots,t\}$ for a $t$ to be chosen later (which will be of the form $C(n\log n)^{1/3}$). Let $k\leq\eta t$ for some $\eta>0$ also to be chosen later (which will be an absolute constant). For each $B'\subset X$ of size $k$, let $A_1'(B'), A_2'(B')$ and $A_3'(B')$ be three sets of equal size that partition $X\setminus B'$, with the exception of at most two elements, with these sets chosen arbitrarily. Now let $\phi$ be a random function from $X$ to $G$. It is easy to show that for sufficiently large $n$ the probability that the restrictions of $\phi$ to the sets $B', A_1'(B'), A_2'(B')$ and $A_3'(B')$ are all injections is at least 1/2. If we condition on this event, then the images $B, A_1, A_2, A_3$ of those four sets are independent random subsets of $G$ of the appropriate cardinalities. If we condition further on $\phi$ being an injection (so now we ask for the images to be disjoint), which again is true with probability at least 1/2, then their union $U$ is a random set of size $t$ and $B$ is a random subset of $U$ of size $k$.

For each $B'\subset X$, let $P(B')$ be the probability that $A_1+A_2-A_3$ is disjoint from $B$, given that the restrictions of $\phi$ to the sets $B', A_1'(B'), A_2'(B')$ and $A_3'(B')$ are all injections. If $\sum_{|B'|\leq\eta t}P(B')=p$, then the probability that there exists $B'\subset X$ of size at most $\eta t$ such that $A_1+A_2-A_3$ is disjoint from $B$ is at most $p$. If we now condition on $\phi$ being an injection, this probability goes up to at most $2p$. Therefore, with probability at least $1-2p$, no subset of $U$ of size at most $\eta |U|$ is additively isolated in $U$. In other words, with probability at least $1-2p$, $U$ is $(1-\eta)$-additively connected.

We shall therefore concentrate our attention on estimating $P(B')$.

\begin{lemma} \label{unionbound}
Let $G$ be a finite Abelian group of order $n$, let $B$ be a fixed subset of $G$ of size $k$, and let $A_1, A_2$ and $A_3$ be random subsets of $G$ of size $s=C(n\log n)^{1/3}$. Let $t=k+3s$ and suppose that $k\leq t/10^5$. Then there exists an absolute constant $C > 0$ such that the probability that $A_1+A_2-A_3$ and $B$ are disjoint is at most $(2n)^{-2}\binom tk^{-1}$.
\end{lemma}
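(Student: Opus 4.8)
The goal is to bound the probability that the random triple sumset $A_1+A_2-A_3$ misses the fixed $k$-set $B$. I would condition on the outcome of $A_1+A_2$, call it $S = A_1+A_2$, and then take the probability over the independent choice of $A_3$. The event "$A_1+A_2-A_3$ disjoint from $B$" is the event that $A_3$ avoids the set $S - B = \{s-b : s\in S,\ b\in B\}$; equivalently, $A_3$ is contained in the complement of $S-B$. So I want to show that, with overwhelming probability over $A_1, A_2$, the set $S-B$ is large — of size comparable to $\min(k|S|, n)$ — and then the probability that a random $s$-set $A_3$ avoids a set of that size is tiny. The key quantitative input is Lemma~\ref{azumalemma}: applied with $A = A_1+A_2$ (thought of as a random-ish set — but actually I should apply it the other way, with $A = A_3$ and $K = B$, or rather I should use it to control $|S-B|$ via $|S|$ and $|B|=k$). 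More carefully, first I would use Lemma~\ref{azumalemma} (or just a direct Chernoff/Azuma estimate) to show $|A_1+A_2|$ is large with probability at least $1 - e^{-\Omega(s)}$: since $s^2 \gg n$ when $s = C(n\log n)^{1/3}$ (indeed $s^2 = C^2 n^{2/3}(\log n)^{2/3}\cdot$... wait, $s^2 = C^2(n\log n)^{2/3}$, which is $o(n)$), hmm — so actually $|A_1+A_2|$ is of order $s^2 = C^2(n\log n)^{2/3}$, still far below $n$. Then $k\cdot|A_1+A_2|$ is of order $k s^2$; since $k\le t/10^5 \le s/10^4$ roughly, this is at most $\sim s^3/10^4 = C^3 n\log n / 10^4$, which is comparable to $n$ up to the $\log$ and constant. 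So I am in (or near) the second regime of Lemma~\ref{azumalemma}, and $|S - B|$ should be of order $n$ (possibly after absorbing a $\log$ factor into the constant $C$).

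**The main steps.** (1) Show that with probability $1 - e^{-\Omega(s)}$ over the choice of $A_1, A_2$, the sumset $S = A_1 + A_2$ has size at least $c s^2$ for an absolute constant $c$; this is a routine second-moment / Azuma argument (the number of representations $x = a_1 + a_2$ has expectation $s^2/n$ per element, Azuma in the vertex-exposure on $A_1$ with bounded differences gives concentration). (2) Conditioning on such an $S$, apply Lemma~\ref{azumalemma} with the roles "$A$" $= S$ (a set of size $\ge cs^2$) and "$K$" $= B$ (size $k$): since $k\cdot cs^2$ is, after choosing $C$ large, at least $n$, we get $|S - B| \ge n/2$ except with probability $e^{-\Omega(cs^2)}$ — but wait, $S$ is not a uniformly random set of fixed size, so Lemma~\ref{azumalemma} does not literally apply to it; instead I would observe that Lemma~\ref{azumalemma}'s proof only uses Azuma on the set being random, so I should instead keep $A_1$ (and $A_2$) as the random objects and run the bounded-difference argument for the functional $|A_1 + A_2 - B|$ directly: changing one element of $A_1$ changes this by at most $s\cdot k$... that's too big. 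Better: condition on $A_2$, treat $|(A_1 + A_2) - B| = |A_1 + (A_2 - B)|$ where $A_2 - B$ is a fixed set $K'$ of size $|A_2 - B| \ge$ (something of order $\min(ks, n)$ by another application of the same lemma to $A_2$ and $B$), and now $A_1$ is a genuinely uniform random $s$-set, so Lemma~\ref{azumalemma} applies cleanly with $m = s$, $k = |K'|$. (3) Having established $|A_1 + A_2 - B| \ge n/2$ with probability $1 - e^{-\Omega(s)}$, the conditional probability that the random $s$-set $A_3$ avoids a fixed set of size $\ge n/2$ is at most $\binom{n/2}{s}/\binom{n}{s} \le 2^{-s}$. (4) Combine: the total probability is at most $e^{-\Omega(s)} + 2^{-s} \le e^{-\Omega(s)}$, and since $s = C(n\log n)^{1/3}$ while $\log\!\left((2n)^2\binom{t}{k}\right) \le \log\!\left(4n^2\right) + k\log(et/k) = O(n^{1/3}\log n)$ — and we can make $s$ dominate this by choosing $C$ large — the bound $e^{-\Omega(s)} \le (2n)^{-2}\binom{t}{k}^{-1}$ follows.

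**Where the difficulty lies.** The delicate point is chaining two applications of Lemma~\ref{azumalemma}: first to show $|A_2 - B|$ is of order $\min(ks, n)$, then to show $|A_1 + (A_2 - B)|$ is of order $\min(s|A_2-B|, n) = \Omega(n)$. One must check the regimes line up — when $ks \le n$ the first application lands in regime one giving $|A_2 - B| \ge ks/4$, and then $s\cdot(ks/4) = ks^2/4$; we need this $\ge n$, i.e. $k \ge 4n/s^2 = 4n^{1/3}/(C^2(\log n)^{2/3})$, which may fail for small $k$. For small $k$ one instead has $|A_1 + A_2 - B| \ge |A_1 + A_2| \ge cs^2$, and we need $\binom{n - cs^2}{s}/\binom{n}{s}$ small; this is roughly $(1 - cs^2/n)^s \le e^{-cs^3/n} = e^{-cC^3\log n}$, which beats $\binom{t}{k}$ provided $C$ is large — so in fact the small-$k$ case is handled by just using $|A_1+A_2|\ge cs^2$ and the $e^{-\Omega(s^3/n)} = e^{-\Omega(C^3\log n)}$ bound, and the large-$k$ case (where $ks^2 \gtrsim n$) by the two-step argument giving $|A_1+A_2-B| \ge n/2$ and the $2^{-s}$ bound. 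Splitting into these two ranges of $k$ and verifying that in each the resulting probability is $\le (2n)^{-2}\binom{t}{k}^{-1}$ for suitable absolute $C$ is the real content; everything else is Azuma's inequality and a binomial coefficient estimate.
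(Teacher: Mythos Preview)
Your overall strategy matches the paper's proof almost exactly: rewrite the event as ``$A_3$ avoids $A_1+A_2-B$'', build up $|A_1+A_2-B|$ by two successive applications of Lemma~\ref{azumalemma} (first to $|A_2-B|$, then to $|A_1+(A_2-B)|$), and split into two regimes according to whether $|A_2-B|\cdot s$ exceeds $n$. The large-$k$ case, yielding $|A_1+A_2-B|\ge n/2$ and hence a $2^{-s}$ bound, is handled correctly.

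There is, however, a genuine gap in your treatment of the small-$k$ regime. When $ks^2/4<n$ you abandon the two-step argument and fall back on the trivial inequality $|A_1+A_2-B|\ge|A_1+A_2|\ge cs^2$, obtaining a failure probability of at most $e^{-cs^3/n}=n^{-cC^3}$. You then assert that this beats $\binom{t}{k}$ for $C$ large, but that is false near the top of the small-$k$ range. Indeed, for $k$ close to the threshold $4n/s^2\asymp n^{1/3}/(C^2(\log n)^{2/3})$ one has $t/k\asymp C^3\log n$ and hence $\log\binom{t}{k}\asymp k\log\log n\asymp n^{1/3}(\log\log n)/(C^2(\log n)^{2/3})$, which grows much faster than $cC^3\log n$ for any fixed $C$. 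So your bound $n^{-cC^3}$ cannot absorb $\binom{t}{k}$ there.

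The fix is already implicit in your own two-step setup: do \emph{not} discard the dependence on $B$. When $ks^2/4<n$, the second application of Lemma~\ref{azumalemma} simply lands in the first regime rather than the second, giving $|A_1+(A_2-B)|\ge |A_2-B|\cdot s/4\ge ks^2/16$ with probability at least $1-e^{-s/32}$. The probability that $A_3$ avoids a set of this size is then at most $(1-ks^2/16n)^s\le\exp(-ks^3/16n)=n^{-C^3k/16}$, and now the factor of $k$ in the exponent is exactly what is needed to dominate $\binom{t}{k}\le(et/k)^k$. This is precisely how the paper handles the case $|A_1-B|\cdot s\le n$.
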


\begin{proof}
Observe first that $A_1+A_2-A_3$ and $B$ are disjoint if and only if $A_1+A_2-B$ and $A_3$ are disjoint. Next, note that by Lemma \ref{azumalemma} and the fact that $ks\leq n$ (we are assuming throughout that $n$ is sufficiently large), we have that $|A_1-B|\geq ks/4$ with probability at least $1-e^{-s/32}$. 

Let $K=A_1-B$. The rest of the proof splits into two cases. If $|K|s\leq n$, then Lemma \ref{azumalemma} implies that $|A_2+K|\geq|K|s/4\geq ks^2/16$ with probability at least $1-e^{-s/32}$. If this event happens, then the probability that $A_1+A_2-B$ is disjoint from $A_3$ is at most $(1-ks^2/16n)^s\leq\exp(-ks^3/16n)=n^{-C^3k/16}$. Since $k\leq t/10^5$, standard estimates for binomial coefficients give us, with room to spare, that $\binom tk\leq (et/k)^k \leq e^{t/4000} \leq e^{s/1000}$. Also, when $C = 4$, $n^{-C^3k/16}$ is much less than $\binom nk^{-1}$. Together, these estimates easily suffice to show that 
\[2e^{-s/32} + n^{-C^3k/16} \leq (2n)^{-2} \binom{t}{k}^{-1}.\]

If $|K|s>n$, then let $L$ be a subset of $K$ such that $n\leq |L|s\leq 2n$. The second part of Lemma \ref{azumalemma} implies that $\P[|A_2+L|<n/2]\leq e^{-s/800}$. If $|A_2+L|\geq n/2$, then the probability that $A_2+L$ is disjoint from $A_3$ is at most $2^{-s}$, which is much smaller than $\binom tk^{-1}$ when $k=t/10^5$, by the estimates in the previous paragraph. Moreover, by the same estimates, $e^{-s/800}$ is much less than $\binom tk^{-1}$, so we are again done in this case provided $C$ is sufficiently large. 
\end{proof}

Combining Lemma \ref{unionbound} with the preceding remarks, we obtain the main result of this section.

\begin{lemma}
Let $G$ be a finite Abelian group of order $n$ and let $U$ be a random subset of $G$ of size $C(n\log n)^{1/3}$. Then there exists an absolute constant $C > 0$ such that the probability that $U$ is $(1-10^{-5})$-additively connected is at least $1-1/n$.
\end{lemma}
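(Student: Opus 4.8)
The plan is to feed the estimate of Lemma~\ref{unionbound} into the union bound prepared in the paragraphs immediately preceding it. Recall that there we work with $X=\{1,\dots,t\}$ for $t=C(n\log n)^{1/3}$, we take $\eta=10^{-5}$, and the argument given reduces the claim to showing that $2p\le 1/n$, where $p=\sum_{1\le|B'|\le\eta t}P(B')$ and, for $B'\subset X$ with $|B'|=k$, $P(B')$ is the probability --- conditioned on the restrictions of $\phi$ to $B'$ and to the three blocks $A_1'(B'),A_2'(B'),A_3'(B')$ being injective --- that $A_1+A_2-A_3$ is disjoint from $B$. (We may restrict to nonempty $W$, equivalently $k\ge 1$, since the extension argument in the previous section only ever invokes additive connectivity for nonempty subsets.)

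The first step is to recognise each $P(B')$ as an instance of Lemma~\ref{unionbound}. Once we condition on the injectivity events, the images $B,A_1,A_2,A_3$ are independent uniform random subsets of $G$ of sizes $k$ and $s:=\lfloor(t-k)/3\rfloor$; absorbing the at most two leftover elements of $X\setminus B'$ into $B'$ arranges $t=k+3s$ exactly, at the negligible cost of allowing $|B'|$ to be $k$, $k+1$ or $k+2$. Conditioning further on the value of $B$, for each fixed $B\subset G$ of size $k$ we are precisely in the situation of Lemma~\ref{unionbound}: we have $k\le\eta t=t/10^5$, $ks\le n$ for $n$ large, and $s=(1+o(1))t/3$ is of the form $C'(n\log n)^{1/3}$ with $C'$ as large as we wish provided $C$ was chosen large enough (it suffices that $C\ge 3C_0/(1-\eta)$, where $C_0$ is the threshold constant furnished by Lemma~\ref{unionbound}). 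Averaging over $B$ then gives $P(B')\le(2n)^{-2}\binom tk^{-1}$ for every such $B'$.

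Summing over the $\binom tk$ sets $B'$ of size $k$ yields $\sum_{|B'|=k}P(B')\le(2n)^{-2}$, and summing over $1\le k\le\eta t$ gives $p\le\eta t(2n)^{-2}=\eta t/(4n^2)$. Hence $2p\le\eta t/(2n^2)=\eta C(n\log n)^{1/3}/(2n^2)=o(1/n)$, which is at most $1/n$ once $n$ is large. By the reduction recalled in the first paragraph, $U$ is therefore $(1-10^{-5})$-additively connected with probability at least $1-2p\ge 1-1/n$.

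I do not anticipate a genuine obstacle: the substantive estimate is already packaged in Lemma~\ref{unionbound}, and what remains is essentially bookkeeping. The two points meriting a little care are the conditioning argument that converts $P(B')$ into an application of Lemma~\ref{unionbound} while disposing of the at most two leftover elements, and the verification that a single absolute constant $C$ can be chosen so that $|U|=C(n\log n)^{1/3}$ while the induced block size $s\approx t/3$ still clears the threshold required by Lemma~\ref{unionbound}; both are routine.
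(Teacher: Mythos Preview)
Your proposal is correct and is essentially the argument the paper intends: the paper's own proof is the single sentence ``Combining Lemma~\ref{unionbound} with the preceding remarks,'' and you have simply written out what that sentence means---apply Lemma~\ref{unionbound} to each $P(B')$, cancel the $\binom tk$ against the number of $B'$ of size $k$, and sum over $k$. Your handling of the bookkeeping points (the at most two leftover elements, the restriction to $k\ge 1$, and the reconciliation of the constant $C$ for $|U|=t$ with the constant required by Lemma~\ref{unionbound} for the block size $s\approx t/3$) is exactly what the paper leaves implicit.
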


\noindent Since this was what we needed to complete the proof of our main theorem, we are now done.

\section{Concluding remarks}

Ultimately, one might hope to prove a more precise result still. Define the \textit{Freiman dimension} of a subset $A$ of an Abelian group to be one less than the dimension of the vector space of all Freiman homomorphisms from $A$ to $\R$. For example, $\Z_N$ has Freiman dimension 0, since every Freiman homomorphism from $\Z_N$ to $\R$ is constant, and an arithmetic progression $P\subset\Z$ has Freiman dimension 1, since a Freiman homomorphism from $P$ to $\R$ is determined by the values it takes at the first two points. If $A$ is a subset of $\Z_N$ such that every Freiman homomorphism from $A$ to an Abelian group $H$ extends to a Freiman homomorphism defined on all of $\Z_N$, then $A$ has Freiman dimension 0, since if $H=\R$, then the extension, and therefore the original homomorphism, must be constant. Therefore, Theorem~\ref{thm:main2} implies that if a random subset $A$ of $\Z_N$ is chosen with probability $C(\log N)^{1/3} N^{-2/3}$, then it has Freiman dimension $0$. It would be interesting to understand how the Freiman dimension of the random set $A$ decreases as the probability moves from $CN^{-2/3}$ to $C(\log N)^{1/3} N^{-2/3}$. For example, it might be the case that a hitting time result holds. That is, if we build up our random set one element at a time, it may be that the Freiman dimension drops to $0$ at precisely the same moment when all elements are contained in an additive quadruple.

\end{document}